\newcolumntype{P}[1]{>{\raggedright\let\newline\\\arraybackslash\hspace{0pt}}m{#1}}
\DeclareFontFamily{U}{mathx}{\hyphenchar\font45}
\DeclareFontShape{U}{mathx}{m}{n}{<-> mathx10}{}
\DeclareSymbolFont{mathx}{U}{mathx}{m}{n}
\DeclareMathAccent{\widebar}{0}{mathx}{"73}
\definecolor{darkblue}{cmyk}{1,0.3,0,0.1}  
\newtheorem{proposition}{Proposition}[section]
\newtheorem{theorem}[proposition]{Theorem}
\newtheorem{corollary}[proposition]{Corollary}
\newtheorem{lemma}[proposition]{Lemma}
\newtheorem{prop}[proposition]{Proposition}
\newtheorem{thm}[proposition]{Theorem}
\theoremstyle{definition}
\newtheorem{example}[proposition]{Example}
\newtheorem{definition}[proposition]{Definition}
\newtheorem{defcon}[proposition]{Definition/Construction}
\theoremstyle{remark}
\newtheorem{remark}[proposition]{Remark}
\numberwithin{equation}{section}
\newcommand{\margincolor}{red}      
\definecolor{darkgreen}{rgb}{0,0.7,0}
\newcounter{margincounter}
\newcommand{\marginnum}{
\ifnum\value{margincounter}<10
\textcolor{\margincolor}{\begin{picture}(0,0)\put(2.2,2.4){\circle{9}}\end{picture}\footnotesize\arabic{margincounter}}
\else\ifnum\value{margincounter}<100
\textcolor{\margincolor}{\begin{picture}(0,0)\put(4.256,2.5){\circle{11}}\end{picture}\footnotesize\arabic{margincounter}}
\else
\textcolor{\margincolor}{\begin{picture}(0,0)\put(6.8,2.5){\circle{14}}\end{picture}\footnotesize\arabic{margincounter}}
\fi\fi
}
\newcommand{\newword}[1]{\textbf{\emph{#1}}}
\newcommand{\rank}{\operatorname{rank}}
\newcommand{\covered}{{\,\,<\!\!\!\!\cdot\,\,\,}}
\newcommand{\set}[1]{{\lbrace #1 \rbrace}}
\newcommand{\A}{{\mathcal A}}
\newcommand{\D}{{\mathbf D}}
\newcommand{\join}{\vee}
\newcommand{\meet}{\wedge}
\renewcommand{\Join}{\bigvee}
\newcommand{\B}{\mathbf{B}}
\newcommand{\M}{\mathbf{M}}
\newcommand{\R}{\mathcal{R}}
\renewcommand{\P}{\mathcal{P}}
\newcommand{\Q}{\mathcal{Q}}
\renewcommand{\S}{\mathbf{S}}
\newcommand{\NC}{\operatorname{NC}}
\newcommand{\curve}{\operatorname{curve}}
\renewcommand{\int}{{\operatorname{int}}}
\renewcommand{\phi}{\varphi}
\newcommand{\afftype}[1]{{\widetilde{\raisebox{0pt}[6pt][0pt]{#1}}}}
\title{Noncrossing partitions of a marked surface}
\author{Nathan Reading}
\thanks{Nathan Reading was partially supported by the National Science Foundation under Award Number DMS-2054489.}
\subjclass[2010]{Primary: 05A18, 
57M50; 	
Secondary: 05E16} 
\begin{document}

\begin{abstract}
We define noncrossing partitions of a marked surface without punctures (interior marked points).
We show that the natural partial order on noncrossing partitions is a graded lattice and describe its rank function topologically.
Lower intervals in the lattice are isomorphic to products of noncrossing partition lattices of other surfaces.
We similarly define noncrossing partitions of a symmetric marked surface with double points and prove some of the analogous results.
The combination of symmetry and double points plays a role that one might have expected to be played by punctures.
\end{abstract}
\maketitle

\vspace{-15pt}

\setcounter{tocdepth}{1}
\tableofcontents

\section{Introduction}\label{intro sec}  
In this paper, we introduce two closely related generalizations of Kreweras' classical construction of noncrossing partitions of a cycle~\cite{Kreweras}.
The first generalization replaces the cycle (or, more correctly, a disk) with a general marked surface (without punctures) in the sense of \cite{cats1,cats2}.
A marked surface is an oriented compact surface $\S$ with boundary and a finite set $\M$ of distinguished boundary points.  
A noncrossing partition of a marked surface $(\S,\M)$ is a set partition of the set of marked points together with an embedding of each block as a point, curve, or surface in $(\S,\M)$.
Except in the simplest cases, there are infinitely many noncrossing partitions.
For precise definitions, see Section~\ref{marked sec}.
For now, we show in Figure~\ref{nc ex fig} some noncrossing partitions in the case where $\S$ is a sphere minus three open disks.  
Here, as in other figures, blocks that are embedded as points or curves are given more thickness, to improve legibility.

\begin{figure}
\scalebox{0.7}{\includegraphics{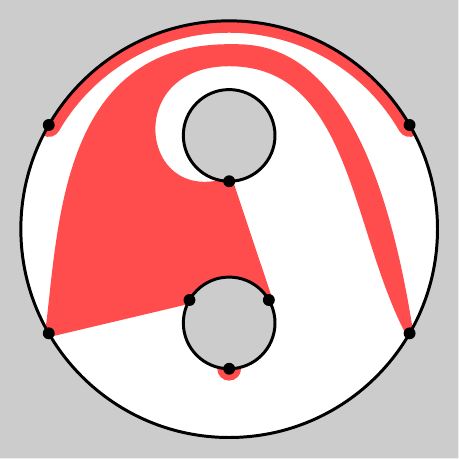}}
\quad\quad
\scalebox{0.7}{\includegraphics{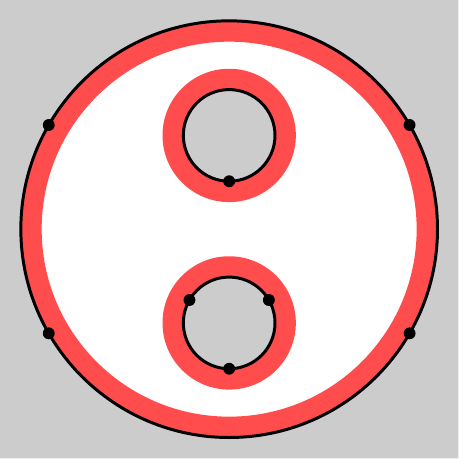}}\\[20pt]
\scalebox{0.7}{\includegraphics{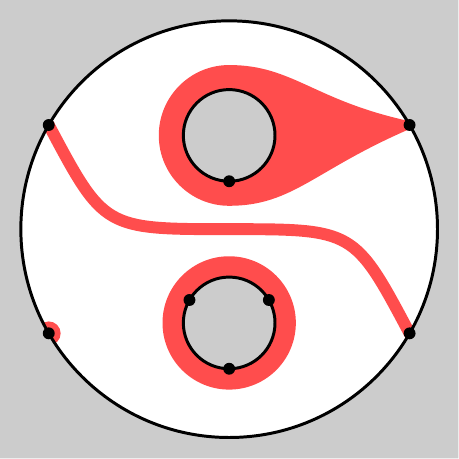}}
\quad\quad
\scalebox{0.7}{\includegraphics{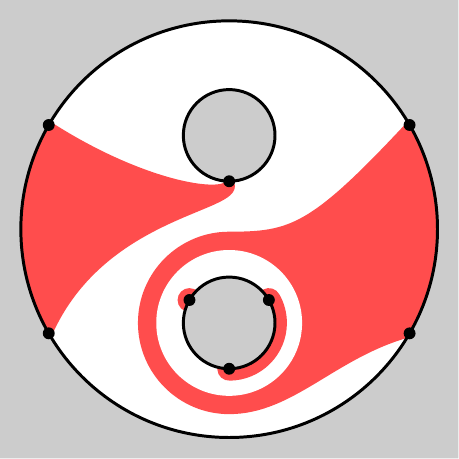}}
\caption{Some noncrossing partitions of a marked surface}
\label{nc ex fig}
\end{figure}

There is a natural partial order on noncrossing partitions (containment of the embedded blocks).
We show that this partial order is a lattice (Theorem~\ref{lattice}) and that it is graded (Theorem~\ref{graded}).
The rank function is the number of marked points minus the first Betti number plus the zeroth Betti number.
We observe (Proposition~\ref{lower}) that lower intervals in the noncrossing partition lattice are isomorphic to products of noncrossing partition lattices for marked surfaces.

The second generalization, defined more precisely in Section~\ref{doub sec}, instead features a surface with a nontrivial involutive symmetry and allows, in addition to marked points on the boundary, special points in the interior called double points, which occur in pairs at the same location in the surface.
Crucially, the symmetry is required to map each double point to the other double point in the same location.
In this setting, a noncrossing partition is a set partition of the marked points (including the boundary marked points and both copies of each double point) enriched with an embedding of each block as a point, curve, marked surface, or symmetric marked surface with double points.
Some examples of symmetric noncrossing partitions of a marked surface with double points are shown in Figure~\ref{sym nc ex fig}.
The figure shows a sphere with four open disks removed.
The involutive symmetry is a two-fold rotation that swaps the outer and inner circles shown and also swaps the smaller side circles.
This symmetry has two fixed points.  
One, shown lower in the pictures, is a double point, and the other is marked with a small dotted circle higher in the pictures.

\begin{figure}
\scalebox{0.7}{\includegraphics{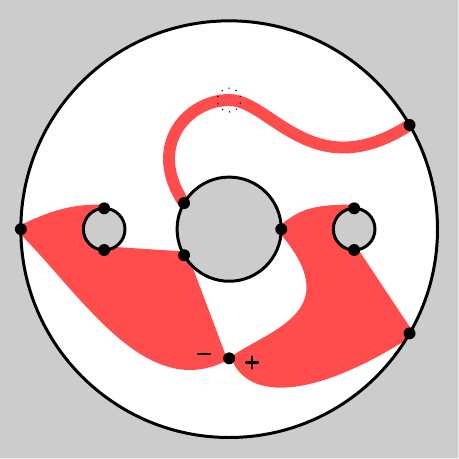}}
\quad\quad
\scalebox{0.7}{\includegraphics{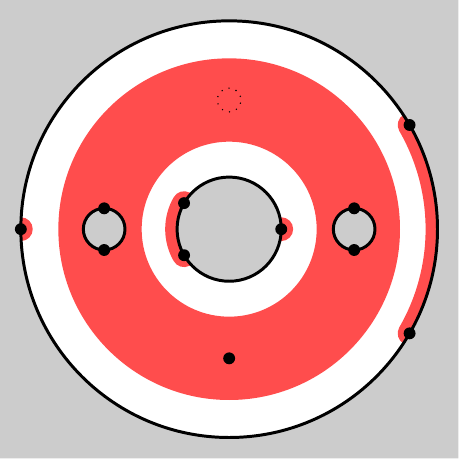}}\\[20pt]
\scalebox{0.7}{\includegraphics{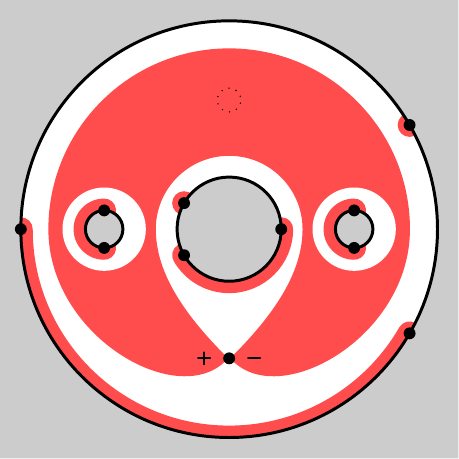}}
\quad\quad
\scalebox{0.7}{\includegraphics{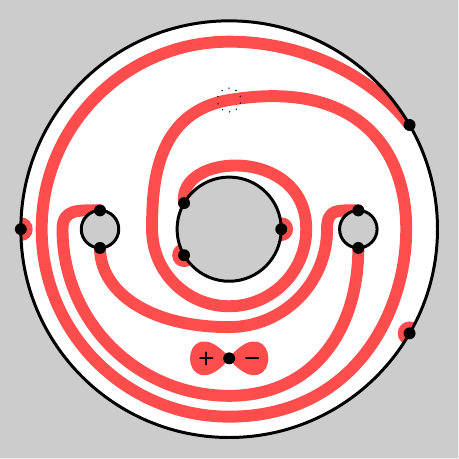}}
\caption{Some symmetric noncrossing partitions of a marked surface with double points}
\label{sym nc ex fig}
\end{figure}

In the symmetric-with-double-points setting, the natural partial order on noncrossing partitions is not always a lattice.
(See Example~\ref{non lat}.)
However, it is still graded (Theorem~\ref{sym graded}).
The rank function in this case also involves something like Betti numbers, specifically the dimensions of the kernel of a certain linear map on homology.
Again, lower intervals in the noncrossing partition poset are isomorphic to products of noncrossing partition posets (Proposition~\ref{sym lower}).

This paper is a prequel to~\cite{BThesis}, in the sense that it was written later but (from one point of view) it logically precedes~\cite{BThesis}.
The general constructions and proofs given here unify constructions and proofs due to Laura Brestensky and the author on planar models for noncrossing partitions of classical affine types (in joint work and in Brestensky's Ph.D. thesis research~\cite{BThesis}).
In particular, the results of this paper will be used to present results on noncrossing partitions of classical affine types more concisely in~\cite{affncA,affncD}.

Earlier sources of inspiration for noncrossing partitions of marked surfaces (with and without symmetry/double points), in addition to Kreweras~\cite{Kreweras}, include the finite type-B and type-D noncrossing partitions of Reiner~\cite{Reiner} and Athanasiadis-Reiner~\cite{Ath-Rei}, and the Digne's noncrossing paths in the annulus~\cite{Digne1}.
Our work on noncrossing partitions of affine type began as a project to create combinatorial models for the work of McCammond and Sulway~\cite{McSul}.

The guiding principle in defining noncrossing partitions of a marked surface with and without symmetry/double points was to reproduce the behavior of noncrossing partitions associated to finite and affine Coxeter groups.

\subsection*{Organization of this paper}
In Section~\ref{marked sec}, we define noncrossing partitions of a marked surface and establish their properties as summarized above.
Then, in Section~\ref{doub sec}, we define noncrossing partitions of a symmetric marked surface with double points and establish their properties, with definitions and arguments parallel to those in Section~\ref{marked sec} and reusing arguments as much as possible.
In Section~\ref{ex sec}, we revisit the motivating examples (noncrossing partitions of finite types A, B, and D and affine types $\afftype{A}$, $\afftype{B}$, $\afftype{C}$, and $\afftype{D}$) in light of the general results.
In the affine cases, the relationship between these planar models and affine noncrossing partitions is given in \cite{BThesis} and will appear in \cite{affncA,affncD}.
Also in that section (Remark~\ref{clus rem}), we briefly discuss the similarities and differences between noncrossing partitions and the combinatorics of clusters (triangulations) in these motivating examples, including comparing double points/symmetry with punctures.

\section{Noncrossing partitions of a marked surface}\label{marked sec}  
In this section, we define noncrossing partitions of a marked surface and establish their properties as summarized in Section~\ref{intro sec}.
Although the definition of noncrossing partitions of a surface is new, it uses some background material from \cite{cats1,cats2}, which considers (tagged) triangulations of marked surfaces.
We allow, as part of our definition of marked surfaces, some degenerate cases (points and curves).

\begin{definition}[\emph{Nondegenerate marked surface}]\label{nondegen def}
A \newword{nondegenerate marked surface} is $(\S,\M)$, where $\S$ is a surface and $\M$ is a set of points of $\S$ called \newword{marked points}, satisfying conditions that we now describe.
The surface $\S$ is compact, connected, and oriented, and has a boundary $\partial\S$ that is a union of disjoint circles called \newword{boundary components}.
The set $\M$ of marked points is nonempty and finite and contained in $\partial\S$.
(A marked point in the interior of $\S$ is called a \newword{puncture}; the marked surfaces considered in this paper have no punctures, but later, in Section~\ref{doub sec}, we consider double points in the interior of~$\S$.)
We rule out the cases where $\S$ is a disk with $1$, or $2$ marked points.
In many contexts (e.g.\ in the foundational papers \cite{cats1,cats2} on cluster algebras and marked surfaces), there is a requirement that every boundary component contains at least one marked point.
We don't make that requirement, and we call a boundary component of $\S$ with no marked points an \newword{empty boundary component}.
A piece of the boundary of~$\S$ with endpoints in $\M$ but no points of $\M$ in its interior is called a \newword{boundary segment}.
A boundary segment may have both endpoints at the same marked point.
Since empty boundary components contain no marked points, they also contain no boundary segments.
\end{definition}

\begin{definition}[\emph{Degenerate marked surface}]\label{degen marked surf def}
A \newword{degenerate marked surface} is $(\S,\M)$, where either
\begin{itemize}
\item
$\S$ is a one-element set and $\M=\S$, or
\item
$\S$ is a curve with two distinct endpoints, and $\M$ is the set consisting of the two endpoints of $\S$.
\end{itemize}
In either case, the elements of $\M$ are called \newword{marked points}.
When $\S$ is a singleton, there are no boundary segments. 
When $\S$ is a curve, it has a unique boundary segment, which is $\S$ itself. 
\end{definition}

\begin{definition}[\emph{Marked surface}]\label{marked surf def}
The term \newword{marked surface} refers to both nondegenerate and degenerate marked surfaces. 
\end{definition}

\begin{remark}\label{degen digon monogon}
The degenerate marked surfaces can be thought of as replacements for the monogon (disk with one marked point) and digon (disk with two marked points) that were explicitly ruled out in Definition~\ref{nondegen def}.
Since the digon's two boundary segments are isotopic to each other in the digon, we replace them, and the digon between them, with a single boundary segment.
Similarly, since the monogon's only boundary segment can be deformed to a point in $\S$, we replace the monogon with a point.
\end{remark}

\begin{definition}[\emph{Ambient isotopy in $\S$}]\label{ambient def}
Two subsets of $\S$ are related by \newword{ambient isotopy} if they are related by a homeomorphism from $\S$ to itself, fixing the boundary $\partial\S$ pointwise and homotopic to the identity by a homotopy that fixes $\partial\S$ pointwise at every step.
For brevity, we often refer to this notion simply as ``isotopy''.
\end{definition}

\begin{definition}[\emph{Arc}]\label{arc def}
An \newword{arc} in $(\S,\M)$ is a non-oriented curve in $\S$, with endpoints in $\M$, that
\begin{itemize}
\item
does not intersect itself except possibly at its endpoints, 
\item
except for its endpoints, does not intersect the boundary of $\S$,
\item
does not bound a monogon in $\S$, and
\item 
does not combine with a boundary segment to bound a digon in $\S$.
(This exclusion also applies to the case where the endpoints of the arc coincide, so that the digon has its two vertices identified.)
\end{itemize}
In particular, a degenerate marked surface has no arcs.
Arcs are considered up to ambient isotopy.
\end{definition}

\begin{definition}[\emph{Ring}]\label{ring def}
A closed curve is \newword{trivial} if it bounds a disk in $\S$.
A \newword{ring} is a nontrivial closed curve in $(\S,\M)$ that has no self-intersections, is disjoint from the boundary of  $\S$, and does not, together with an \emph{empty} boundary component of~$\S$, bound an annulus in $\S$.
Rings are considered up to ambient isotopy.
\end{definition}

\begin{definition}[\emph{Embedded block}]\label{embedded def}
An \newword{embedded block} in $(\S,\M)$ is a closed subset~$E$ of~$\S$, intersecting~$\M$ in a nonempty, finite set $\M_E$ such that $(E,\M_E)$ is a (degenerate or nondegenerate) marked surface, with the following restrictions:
\begin{itemize}
\item
If $(E,\M_E)$ is degenerate, it is either a point in $\M$ or an arc or boundary segment of $(\S,\M)$ with two distinct endpoints.
\item
If $(E,\M_E)$ is nondegenerate, each of its boundary components is a ring in $(\S,\M)$, an empty boundary component of $(\S,\M)$, or a finite union of arcs and/or boundary segments of $(\S,\M)$.
\item
The rings of $(\S,\M)$ occurring as boundary components of $E$ are distinct up to isotopy in $\S$.
\end{itemize}
Embedded blocks are considered up to ambient isotopy.
\end{definition}

\begin{definition}[\emph{Noncrossing partition}]\label{nc def}
A \newword{noncrossing partition} of $(\S,\M)$ is a collection $\P=\set{E_1,\ldots,E_k}$ of \emph{disjoint} embedded blocks (for some $k$ with ${1\le k\le|\M|}$) such that every point in $\M$ is contained in some (necessarily exactly one) $E_i$ and no two distinct blocks have the same ring (up to isotopy) in their boundary.
The embedded blocks $E_1,\ldots,E_k$ of $\P$ are called simply the \newword{blocks of~$\P$}.
Noncrossing partitions are considered up to ambient isotopy.
Thus a noncrossing partition of $(\S,\M)$ is a set partition of $\M$ together with the additional data of how each block is embedded.
A specific isotopy representative of a noncrossing partition~$\P$ is called an \newword{embedding} of~$\P$.
\end{definition}

Recall that Figure~\ref{nc ex fig} shows examples of noncrossing partitions of a sphere minus three open disks.

\begin{definition}[\emph{The noncrossing partition lattice}]\label{nc le def}
Noncrossing partitions $\P$ and~$\Q$ of $(\S,\M)$ have $\P\le\Q$ if and only if there exist embeddings of $\P$ and $\Q$ such that every block of $\P$ is contained in some block of $\Q$.
We can equivalently fix an embedding of $\P$ and say that $\P\le\Q$ if there exists an embedding of $\Q$ such that every block of $\P$ is contained in some block of $\Q$.
Or, by the same token, we can fix $\Q$ instead.
The symbol $\NC_{(\S,\M)}$ stands for the set of noncrossing partitions with this partial order.
Looking ahead to Theorem~\ref{lattice}, we call $\NC_{(\S,\M)}$ the \newword{noncrossing partition lattice} of $(\S,\M)$.
The symbol $\covered$ denotes cover relations in $\NC_{(\S,\M)}$.
\end{definition}

We now state our two main results on $\NC_{(\S,\M)}$.

\begin{thm}\label{lattice}
$\NC_{(\S,\M)}$ is a complete lattice.
\end{thm}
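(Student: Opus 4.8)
The plan is to prove that $\NC_{(\S,\M)}$ is a complete lattice by exhibiting arbitrary meets; since $\NC_{(\S,\M)}$ has a top element (the partition with a single block $E=\S$, which is a valid embedded block because $\S$ itself is a marked surface), the existence of arbitrary meets forces the existence of arbitrary joins, and completeness follows. So the core task is: given a (possibly infinite) family $\{\P_\alpha\}$ of noncrossing partitions, construct a noncrossing partition $\P$ that is their greatest lower bound.

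The construction I would carry out is geometric. Fix a hyperbolic metric on $\S$ (with geodesic boundary, and cusps or funnels as convenient), so that each isotopy class of arc or ring has a canonical geodesic representative, and likewise each embedded block has an essentially canonical representative; two embedded blocks are disjoint up to isotopy if and only if their geodesic representatives are disjoint. This is the standard device from the marked-surfaces literature (\cite{cats1,cats2}) that converts "up to isotopy" statements into honest set-theoretic statements, and it is what makes intersections well behaved. For each $\alpha$, realize $\P_\alpha$ by its geodesic embedding. Now define, for a marked point $p$, its block in the candidate meet by intersecting, in a suitable sense, the blocks containing $p$ across all $\alpha$: first take the geometric intersection $\bigcap_\alpha E^{(\alpha)}_{[p]}$ of the closed sets, then let $E_{[p]}$ be the connected component of the "filled" version of that intersection containing $p$ (filling in any complementary disks and digons, and deleting spurious empty boundary components), so that $(E_{[p]},\M\cap E_{[p]})$ is again a marked surface of the allowed type. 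One checks that distinct marked points either yield the same block or blocks that are disjoint up to isotopy, and that no two blocks share a ring — if necessary, one perturbs or merges to enforce the last condition, but with the geodesic representatives shared rings literally coincide, which makes this bookkeeping clean.

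The verification then has three parts. First, $\P$ as constructed is genuinely a noncrossing partition: the blocks are disjoint embedded blocks covering $\M$, which amounts to checking that an intersection of embedded blocks, after filling, satisfies the bulleted conditions in Definition~\ref{embedded def} — this is where one must be careful that filling monogons and digons (Remark~\ref{degen digon monogon}) and discarding empty boundary components returns an object still on the list of allowed embedded blocks, and that the degenerate cases (points, arcs, boundary segments) are handled. Second, $\P\le\P_\alpha$ for every $\alpha$: each block $E_{[p]}$ of $\P$ is by construction contained in the block of $\P_\alpha$ containing $p$, and containment of the geodesic representatives is exactly the relation $\le$ of Definition~\ref{nc le def}. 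Third, $\P$ is the \emph{greatest} lower bound: if $\Q\le\P_\alpha$ for all $\alpha$, fix an embedding of $\Q$; each block $F$ of $\Q$ lies inside the block of each $\P_\alpha$ through any marked point of $F$, hence (after passing to geodesic/canonical representatives) inside the intersection $\bigcap_\alpha E^{(\alpha)}$, hence inside its filled connected component $E_{[p]}$, so $\Q\le\P$.

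The main obstacle I expect is the second step above: showing that the class of embedded blocks is closed under "intersect, then fill, then take the component through $p$," and that this operation is well defined up to isotopy independent of the chosen representatives. An intersection of two subsurfaces, each bounded by arcs/rings/empty components, can a priori be wild; the geodesic (or, alternatively, a taut/minimal-position) normalization is what tames it, guaranteeing the intersection is a subsurface with piecewise-geodesic boundary whose boundary circles are rings, empty components, or unions of arcs and boundary segments — after the mandatory filling of inessential disks and digons. Getting this normalization and its compatibility with the isotopy relation stated precisely, and checking it degrades gracefully in all the degenerate and small-surface cases excluded or added in Definitions~\ref{nondegen def}--\ref{marked surf def}, is the technical heart of the argument; everything else is a diagram chase with the partial order.
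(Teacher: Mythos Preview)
Your approach differs substantially from the paper's, and the step you yourself flag as ``the main obstacle'' is a genuine gap that the proposal leaves open.

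The paper never forms a set-theoretic intersection of blocks. It works entirely through curve sets: Proposition~\ref{curve set le} shows $\P\le\Q \iff \curve(\P)\subseteq\curve(\Q)$, and Definition/Construction~\ref{embed int def} builds an ``embedded intersection'' of two blocks $E_1,E_2$ combinatorially from $C=\curve(E_1)\cap\curve(E_2)$. One partitions $\M_{E_1}\cap\M_{E_2}$ into classes connected by curves in $C$ (which is closed under concatenation and resolution by Proposition~\ref{closure}), and for each class traces a boundary out of \emph{leftmost curves} in $C$ at each marked point; Proposition~\ref{tri} (the finite bound on a pairwise-compatible set of arcs) guarantees the leftmost-curve search terminates. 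Proposition~\ref{meet prop} assembles these embedded intersections into a binary meet $\P\meet\Q$ with $\curve(\P\meet\Q)=\curve(\P)\cap\curve(\Q)$. Completeness then comes not from handling infinite families directly but from Theorem~\ref{graded}: finite height plus binary meets plus a top element give a complete lattice.

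Your geometric route founders at exactly the point you anticipate. Even with geodesic representatives, the boundary of $E_1\cap E_2$ is piecewise geodesic with corners wherever $\partial E_1$ crosses $\partial E_2$; a cornered path running from one marked point through such a crossing to another marked point is not an arc of $(\S,\M)$, and ``filling digons and monogons'' does not straighten it into one. Producing a genuine embedded block requires replacing that broken boundary by an honest arc lying in $\curve(E_1)\cap\curve(E_2)$, and that is precisely what the leftmost-curve machinery accomplishes and what your ``fill'' leaves unspecified. There is also tension between filling and the lower-bound property: once you adjoin a complementary region to the raw intersection, the result need no longer sit inside each $E^{(\alpha)}$ even up to isotopy, so $\P\le\P_\alpha$ requires its own argument rather than being automatic from containment of geodesic representatives. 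Finally, for genuinely infinite families you should invoke finite height (Theorem~\ref{graded}) rather than attempt a direct geometric intersection.
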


Our second main result shows that $\NC_{(\S,\M)}$ is graded and describes its rank function in terms of the topology of the union of the blocks of a noncrossing partition~$\P$.
This is a disjoint union of compact oriented surfaces with boundary together with (non-closed) curves and points.
The zeroth Betti number $b_0(\P)$ is the number of blocks of $\P$ and the first Betti number $b_1(\P)$ is the rank of the first homology (or less formally, the number of circles in $\P$ that don't bound disks in~$\P$).

\begin{theorem}\label{graded}
$\NC_{(\S,\M)}$ is graded, with rank function given by 
\[\rank(\P)=|\M|+b_1(\P)-b_0(\P).\]
\end{theorem}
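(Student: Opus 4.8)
The plan is to establish gradedness by verifying the two standard ingredients: first, that $\rank$ as defined is compatible with the order (it strictly increases along cover relations and agrees at bottom and top), and second, that every maximal chain between two comparable elements has the same length. I would build everything on a careful analysis of cover relations. So the first step is to classify the covers $\P \covered \Q$ in $\NC_{(\S,\M)}$. I expect these to come in a small number of local types: (i) merging two blocks that share a marked point on a common boundary segment, or more generally joining two blocks along a single arc, thereby decreasing $b_0$ by $1$; (ii) absorbing a ring of one block into the surface of an adjacent block (or gluing two blocks along a ring they share), which changes connectivity; and (iii) adding a ``handle'' or closing up a curve inside a single block so that $b_1$ increases by $1$ while $b_0$ stays fixed. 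The key claim is that in each covering type the quantity $b_1(\P) - b_0(\P)$ increases by exactly $1$, so that $\rank(\Q) = \rank(\P) + 1$. Checking this is an Euler-characteristic bookkeeping exercise: if $U(\P)$ denotes the union of the blocks, then $b_0 - b_1 = \chi(U(\P))$ for a $1$-complex-plus-surfaces, and each elementary move changes $\chi$ in a controlled way; I would phrase the covering analysis precisely so that this becomes a short computation rather than a case explosion.

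Next I would check the boundary values. At the bottom $\hat 0$, every block is a single marked point (the finest partition, which exists and is the minimum by Theorem~\ref{lattice} or directly), so $b_0 = |\M|$, $b_1 = 0$, and $\rank(\hat 0) = 0$. At the top $\hat 1$ — the unique maximal element, again available from the lattice structure — there is a single block $E = \S$ with all marked points, so $b_0 = 1$ and $b_1 = b_1(\S)$, the first Betti number of the surface, giving $\rank(\hat 1) = |\M| + b_1(\S) - 1$. Having the rank pinned down at both ends and increasing by exactly $1$ across every cover, a routine induction (or a direct appeal to the standard criterion for graded posets) shows that $\NC_{(\S,\M)}$ is graded with the stated rank function; in particular all maximal chains have length $|\M| + b_1(\S) - 1$. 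The lattice property from Theorem~\ref{lattice} is convenient here because it guarantees $\hat 0$ and $\hat 1$ exist and is also what makes ``graded'' the right statement, but strictly the argument only needs the interval structure.

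The main obstacle, I expect, is the covering analysis itself — proving that the covers are \emph{exactly} the elementary local moves listed above and nothing else. One has to rule out the possibility of a ``jump'': two noncrossing partitions $\P < \Q$ with no partition strictly between them but with $\rank(\Q) - \rank(\P) \ge 2$. The natural way to do this is, given $\P < \Q$, to exhibit an explicit intermediate $\P'$ with $\P \le \P' < \Q$ obtained by performing a single elementary move inside one block of $\Q$: take an embedding of $\Q$ with $\P$ embedded inside it, pick a block $F$ of $\Q$ properly containing more than one block of $\P$ (or a block that has acquired a handle/ring relative to the blocks of $\P$ inside it), and do the minimal surgery — split off one arc, or reduce one ring, or cut one handle. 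Verifying that the result is still a legitimate noncrossing partition (the restrictions in Definition~\ref{embedded def} on rings being distinct up to isotopy, no monogons or digons, etc.) and that it strictly dominates $\P$ and is strictly dominated by $\Q$ is where the topological care is needed. Once the existence of such an intermediate move is in hand, the strict monotonicity of $\rank$ across covers plus the boundary values finish the proof.
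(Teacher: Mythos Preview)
Your approach is essentially the paper's: characterize covers, then verify that $b_1-b_0$ increases by exactly~$1$ across each cover, anchored at the bottom element. The paper makes your ``elementary local moves'' precise by introducing the notion of a \emph{simple connector} $\alpha$ (an arc or boundary segment that leaves one block, crosses no others, and enters a block) and the \emph{augmentation} $\P\cup\alpha$; Proposition~\ref{covers} shows covers are exactly augmentations. Your types (i) and (iii) correspond to the two cases $E\neq E'$ and $E=E'$ of the simple connector, and those are indeed where $b_1-b_0$ goes up by~$1$.

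Your type~(ii) is where the outline slips. Gluing two blocks along a common boundary ring is \emph{not} a separate cover type: by Definition~\ref{nc def} two blocks of a noncrossing partition cannot share a ring, so there is no such move between noncrossing partitions. What actually happens is that ring-gluings occur as a \emph{secondary} step inside the augmentation construction (after adjoining the thickened~$\alpha$, one may need to absorb annuli to restore the no-shared-ring condition), and the paper checks separately that this secondary step leaves $b_1-b_0$ \emph{unchanged}. If you treated it as its own cover type you would get the wrong rank increment. Also a logical note: in the paper Theorem~\ref{graded} is proved \emph{before} Theorem~\ref{lattice} and is used there (to get finite height), so you should not invoke the lattice property; boundedness from Lemma~\ref{poset} gives you $\hat 0$ and $\hat 1$, which is all you need.
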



As part of the proof of Theorem~\ref{graded}, in Proposition~\ref{covers} we characterize the cover relations in $\NC_{(\S,\M)}$.
Going up by a cover means replacing a noncrossing partition $\P$ with a noncrossing partition $\P\cup\alpha$, called the augmentation of $\P$ along $\alpha$, for a special choice of curve $\alpha$ called a simple connector for $\P$.
(See Definitions~\ref{curve union} and~\ref{simp conn} and Proposition~\ref{covers}.)

Before proving the two main theorems, we also point out the following fact, which follows immediately from the definitions, keeping in mind that an embedded block is itself a (possibly degenerate) marked surface.

\begin{prop}\label{lower}
If $\P$ is a noncrossing partition of $(\S,\M)$ with blocks $E_1,\ldots,E_k$, then the interval below $\P$ in $\NC_{(\S,\M)}$ is isomorphic to $\prod_{i=1}^k\NC_{(E_i,\M\cap E_i)}$.
\end{prop}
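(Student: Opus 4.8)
The plan is to exhibit an explicit bijection between the interval $[\hat 0, \P]$ in $\NC_{(\S,\M)}$ and the product $\prod_{i=1}^k \NC_{(E_i, \M\cap E_i)}$, and then check that it and its inverse are order-preserving. First I would fix an embedding of $\P$, so that the blocks $E_1,\dots,E_k$ are honest disjoint closed subsets of $\S$, each a (possibly degenerate) marked surface in its own right with marked point set $\M_i := \M\cap E_i$. The key observation, which I would state carefully, is that if $\Q\le\P$ then $\Q$ has an embedding in which every block of $\Q$ lies inside some $E_i$; grouping the blocks of $\Q$ by which $E_i$ contains them partitions $\Q$ into subcollections $\Q^{(i)}$, and I would argue that each $\Q^{(i)}$, viewed as a collection of embedded blocks in the marked surface $(E_i,\M_i)$, is a noncrossing partition of $(E_i,\M_i)$. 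This requires checking that the defining conditions of Definition~\ref{nc def} transfer: disjointness is inherited; every point of $\M_i$ lies in some block of $\Q$, and that block must be among those inside $E_i$ since the $E_j$ are disjoint; arcs, boundary segments, and rings of $(\S,\M)$ that happen to lie in $E_i$ are arcs, boundary segments, or rings of $(E_i,\M_i)$ (here one uses that a curve non-contractible in $E_i$ remains non-contractible, that a monogon/digon in $E_i$ would be one in $\S$, and that the boundary components of $E_i$ are themselves built from rings, empty boundary components, and arcs/segments of $(\S,\M)$, by Definition~\ref{embedded def}); and the "no two blocks share a ring" condition is inherited. Conversely, given a tuple $(\Q^{(1)},\dots,\Q^{(k)})$ with $\Q^{(i)}\in\NC_{(E_i,\M_i)}$, I would form $\Q := \bigcup_i \Q^{(i)}$ and verify it is a noncrossing partition of $(\S,\M)$ with $\Q\le\P$: the blocks are disjoint because they live in disjoint $E_i$'s, they cover $\M=\bigsqcup_i\M_i$, and two blocks in different $E_i$'s cannot share a ring because a ring in $E_i$ is disjoint from $E_j$; within a single $E_i$ the condition holds by assumption on $\Q^{(i)}$.

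Next I would check that these two constructions are mutually inverse. One direction is immediate; the other uses that the decomposition of $\Q$ according to the $E_i$ is canonical once the embedding of $\P$ is fixed. There is a subtlety worth addressing explicitly: a block of $\Q$ could a priori be isotopic (in $\S$) to something lying in $E_i$ for more than one $i$, or an isotopy of $\Q$ used to fit it inside $\P$ might not restrict to isotopies inside each $E_i$. I would handle this by working with a fixed embedding of $\P$ throughout and invoking the "equivalently fix an embedding of $\P$" clause of Definition~\ref{nc le def}: once $\P$ is embedded, $\Q\le\P$ means there is an embedding of $\Q$ with each block inside some $E_i$, and ambient isotopies of $(\S,\M)$ fixing $\partial\S$ that preserve the (disjoint, closed) sets $E_1,\dots,E_k$ restrict to ambient isotopies of each $(E_i,\M_i)$; this is what makes the map well-defined on isotopy classes. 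I expect this bookkeeping about isotopy classes — making sure the bijection is well-defined and that "containment up to isotopy in $\S$" matches "containment up to isotopy in each $E_i$" — to be the main (though still routine) obstacle.

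Finally, order-preservation in both directions is essentially formal: if $\Q\le\Q'\le\P$ then fixing the embedding of $\P$ and then an embedding of $\Q'$ with blocks inside the $E_i$, we may further isotope $\Q$ inside $\Q'$; restricting to each $E_i$ shows $\Q^{(i)}\le (\Q')^{(i)}$ in $\NC_{(E_i,\M_i)}$. Conversely if $\Q^{(i)}\le(\Q')^{(i)}$ for all $i$, choosing compatible embeddings in each $E_i$ and taking the union gives $\Q\le\Q'$ in $\NC_{(\S,\M)}$, and both are $\le\P$. Since the bijection and its inverse are order-preserving, it is a poset isomorphism $[\hat 0,\P]\cong\prod_{i=1}^k\NC_{(E_i,\M\cap E_i)}$, as claimed. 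As the excerpt notes, once the definitions are unwound this is immediate; the write-up is mostly a matter of recording that each clause of Definition~\ref{nc def} and Definition~\ref{nc le def} localizes correctly to the blocks $E_i$.
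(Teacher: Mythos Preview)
Your proposal is correct and matches the paper's approach: the paper gives no proof at all, simply noting that the result ``follows immediately from the definitions, keeping in mind that an embedded block is itself a (possibly degenerate) marked surface.'' Your write-up is exactly the routine unpacking of that remark, and you yourself acknowledge this in your final sentence.
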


We now proceed to prove Theorem~\ref{graded} followed by Theorem~\ref{lattice}.
First, we establish the most basic property of $\NC_{(\S,\M)}$.

\begin{lemma}\label{poset}
$\NC_{(\S,\M)}$ is a bounded poset.
\end{lemma}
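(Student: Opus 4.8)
The plan is to verify, in turn, reflexivity, transitivity, antisymmetry, and the presence of a least and a greatest element; antisymmetry is the only nonroutine point. The one fact I would establish first is that any two embeddings of a single noncrossing partition of $(\S,\M)$ differ by an ambient isotopy of $\S$ (this is built into the ``considered up to ambient isotopy'' convention, and is exactly what makes the three descriptions of $\le$ in Definition~\ref{nc le def} agree): transporting an embedding of $\P$ along such an isotopy lets me fix, in testing $\P\le\Q$, whichever of the two embeddings I please. Reflexivity is immediate, and transitivity follows by fixing an embedding of $\Q$, slipping an embedding of $\P$ below it and an embedding of $\R$ above it, and composing block containments. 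For the bounds I take $\hat0$ to be the partition all of whose blocks are single marked points and $\hat1=\{\S\}$; here one only has to observe that $\S$ is itself a legal embedded block (each boundary component of $\S$ is an empty boundary component of $(\S,\M)$ or a union of boundary segments, and no boundary component of $\S$ is a ring, so the distinct-rings condition is vacuous), after which $\hat0\le\P\le\hat1$ for all $\P$ is clear.

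For antisymmetry, suppose $\P\le\Q$ and $\Q\le\P$. Because $\P\le\Q$ forces the set partition underlying $\P$ to refine that underlying $\Q$, the two set partitions coincide and their blocks are matched by their marked-point sets. Applying the ``fix one embedding'' principle twice — fix an embedding $\{A_i\}$ of $\P$, get an embedding $\{B_i\}$ of $\Q$ with $A_i\subseteq B_i$, then get an embedding $\{A_i'\}$ of $\P$ with $B_i\subseteq A_i'$ — I obtain $A_i\subseteq B_i\subseteq A_i'$ with $\M_{A_i}=\M_{B_i}=\M_{A_i'}$, where some ambient isotopy of $\S$ carries each $A_i$ to $A_i'$. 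So it suffices to prove the following, block by block: if $A\subseteq B$ are embedded blocks with $\M_A=\M_B$ then $\chi(A)\ge\chi(B)$, with equality only when $A$ and $B$ are ambient isotopic. Granting this, $\chi(A_i)\ge\chi(B_i)\ge\chi(A_i')=\chi(A_i)$ gives $\chi(B_i)=\chi(A_i)$, hence $B_i\cong A_i$; since the $B_i$ sit inside the pairwise disjoint $A_i'$, these block isotopies can be carried out simultaneously, yielding $\Q=\P$. (The cases in which $\S$ is a point, a curve, or a disk, or in which a block is degenerate, are dealt with by direct inspection.)

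To prove the topological claim, isotope $A$ into good position inside $B$, set $C=\overline{B\setminus A}$, and note $B=A\cup C$ with $A\cap C\subseteq\partial A$ a disjoint union of $a$ arcs and some circles, so $\chi(B)=\chi(A)+\chi(C)-a$. Only disk components of $C$ have positive Euler characteristic, and each such disk must meet $A\cap C$ in at least one arc: otherwise the disk would be glued to $A$ along an entire boundary component $\rho$ of $A$, which would then lie in the interior of $B$; hence $\rho$ could carry no marked point (lest $\M_A\neq\M_B$) and could not lie on $\partial\S$, so $\rho$ would be a ring of $A$ bounding a disk in $\S$ — impossible. Therefore $C$ has at most $a$ disk components, all other components contribute at most $0$, and $\chi(C)\le a$, i.e.\ $\chi(B)\le\chi(A)$. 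If equality holds, the same bookkeeping forces every component of $C$ to be either a disk glued to $A$ along exactly one arc, or an annulus glued to $A$ (and to $\partial B$) only along whole boundary circles; the conditions ``no two isotopic rings in a block'' and ``no ring is annulus-parallel to an empty boundary component'' then force each such annulus to be a collar of a boundary component of $B$ whose inner circle is a boundary component of $A$. In every case the component is a collar that an ambient isotopy can absorb into $A$, so $B\cong A$. The main obstacle is exactly this last step: extracting the Euler-characteristic inequality, and especially its equality case, from the several constraints in the definition of an embedded block; everything else is bookkeeping with the definitions.
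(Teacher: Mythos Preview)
Your proof is correct and takes a genuinely different route from the paper's. For antisymmetry, the paper sandwiches directly at the level of boundary components: given $E\subseteq E'\subseteq E''$ with $E,E''$ isotopic embeddings of the same block of $\P$, each arc (respectively ring) of $\partial E'$ is trapped between the corresponding arc (ring) of $\partial E$ and of $\partial E''$, so one deforms $\partial E$ onto $\partial E'$ piece by piece. You instead extract the numerical invariant $\chi$, prove the monotonicity $\chi(A)\ge\chi(B)$ for nested blocks with the same marked-point set via $\chi(B)=\chi(A)+\chi(C)-a$ and $\chi(C)\le a$, and then analyse the equality case by classifying the components of $C=\overline{B\setminus A}$ as bigon or annular collars that an ambient isotopy can absorb. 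The paper's argument is shorter and more visual; yours is more systematic and makes the collar structure of $C$ explicit, which arguably gives a cleaner justification of why the deformation actually exists. Both approaches ultimately lean on the same constraints in Definition~\ref{embedded def} (boundary circles are rings, rings do not bound disks, no ring is annulus-parallel to an empty boundary component, rings in $\partial A$ are pairwise non-isotopic) at the crucial step---you to rule out the bad components of $C$, the paper to know that the sandwiched boundary piece of $E'$ really is a single arc or ring.
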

\begin{proof}
Reflexivity and transitivity of the relation are obvious.
For anti-symmetry, suppose $\P\le\Q\le\P$.
Then in particular, $\P$ and $\Q$ determine the same set partition of $\M$.
We will show that the blocks of $\P$ and $\Q$ are the same.
Let $E$ be a block of $\P$, let $E'$ be the block of $\Q$ containing $E$, and let $E''$ be the block of (another representative of) $\P$ containing $E'$.
If $E$ is degenerate, then it is apparent that $E=E'$, so suppose $E$ is nondegenerate.
Every boundary segment or empty boundary component of $\S$ that is part of the boundary of~$E$ is also part of the boundary of $E'$.
Every arc that is part of the boundary of~$E$, together with the corresponding arc for~$E''$, contain the arc for $E'$ between them, so that we can continuously deform this part of the boundary of $E$ to agree with $E'$.
Similarly, each ring in the boundary of $E$, together with the corresponding ring in the boundary of $E''$, contain the ring for $E'$ between them, so we can deform the ring for $E$ to agree with the ring for $E'$.
We see that $E$ and $E'$ are isotopic.

The poset $\NC_{(\S,\M)}$ has an element that is greater than all others, namely the noncrossing partition with precisely one block, which is $\S$ itself.
It also has an element that is less than all others, the partition with each element of $\M$ being a degenerate block.
\end{proof}

We next describe noncrossing partitions in terms of certain curves that they contain.

\begin{definition}[\emph{Curve set of a noncrossing partition}]\label{curve set def}
The \newword{curve set} $\curve(E)$ of an embedded block~$E$ is the union of two sets:  the set of boundary segments of $(\S,\M)$ that are contained in $E$ and the set of arcs of $(\S,\M)$ that (up to isotopy) are contained in~$E$.
The \newword{curve set} $\curve(\P)$ of a noncrossing partition $\P$ of $(\S,\M)$ is the (necessarily disjoint) union of the curve sets of its blocks.
\end{definition}

\begin{prop}\label{curve set le}
Two noncrossing partitions $\P$ and $\Q$ of $(\S,\M)$ have $\P\le\Q$ if and only if $\curve(\P)\subseteq\curve(\Q)$.
\end{prop}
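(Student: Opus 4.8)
The plan is to prove the two directions separately, using Proposition~\ref{lower} (or rather the argument behind Lemma~\ref{poset}) to handle the interaction between isotopy representatives. For the forward direction, suppose $\P\le\Q$. Fix an embedding of $\Q$; by definition there is an embedding of $\P$ whose every block lies inside a block of $\Q$. If $\gamma$ is a boundary segment or arc in $\curve(\P)$, then $\gamma$ (up to isotopy) lies in some block $E$ of $\P$, hence in the block $E'$ of $\Q$ containing $E$. A boundary segment of $(\S,\M)$ contained in $E$ is literally a boundary segment contained in $E'$, so it lies in $\curve(E')$; and an arc contained in $E$ is, by Definition~\ref{curve set def}, in $\curve(E')$ as well. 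Thus $\curve(\P)\subseteq\curve(\Q)$. The only subtlety is that ``contained up to isotopy in $E$'' must be compatible with ``contained up to isotopy in $E'$''; this is immediate once we fix the single embedding of $\Q$ that witnesses $\P\le\Q$, since then $E\subseteq E'$ as honest subsets of $\S$.

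For the converse, assume $\curve(\P)\subseteq\curve(\Q)$. I want to produce embeddings of $\P$ and $\Q$ realizing $\P\le\Q$. The idea is that a block is determined by its curve set together with its rings, and that the curve set already controls how blocks nest. Concretely, first observe that the curve set of a block $E$ determines $E$ up to the choice of rings: $E$ is isotopic to a regular neighborhood of the union of the curves in $\curve(E)$ together with whatever rings bound components of $\partial E$, with trivial (disk-bounding) complementary pieces filled in as dictated by the requirement that no boundary ring bounds a disk in $E$ and no two blocks share a ring. So the containment $\curve(\P)\subseteq\curve(\Q)$ forces, for each block $E$ of $\P$, that all the curves of $\curve(E)$ lie in a single block $E'$ of $\Q$: indeed two curves in a common block $E$ of $\P$ are ``connected through $E$'', and since each lies in some block of $\Q$, and blocks of $\Q$ are disjoint, they must lie in the \emph{same} block of $\Q$ provided one checks that ``connected through a block'' is detected by the curve set. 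The degenerate blocks (isolated marked points and single-arc/boundary-segment blocks) are handled directly: an isolated point of $\M$ in $\P$ is a point of $\M$ that lies on no curve of $\curve(\P)\subseteq\curve(\Q)$ in the relevant sense, hence lies in some block of $\Q$ and we are done by absorbing it.

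The remaining work, and the main obstacle, is the isotopy bookkeeping: having decided \emph{combinatorially} which block $E'$ of $\Q$ should contain each block $E$ of $\P$, one must choose simultaneous isotopy representatives of all blocks of $\P$ and all blocks of $\Q$ making the containments hold honestly. The tool is the same cut-and-deform argument used in the proof of Lemma~\ref{poset}: an arc of $\curve(E)\subseteq\curve(E')$ can be isotoped into $E'$; the rings bounding $\partial E$ are rings of $(\S,\M)$, and since $E'$ is (isotopic to) a neighborhood of its curve set together with its own boundary rings, a ring of $E$ that is ``interior'' to $E'$ can be pushed inside, while a ring of $E$ coinciding with a boundary ring of $E'$ stays on the boundary. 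One runs this block by block, in an order refining the nesting of blocks of $\Q$, using that distinct blocks of $\P$ are disjoint and that no ring is shared, so the deformations of different blocks of $\P$ can be carried out without collision inside their respective target blocks of $\Q$. The upshot is an embedding of $\P$ sitting inside the fixed embedding of $\Q$ with each block of $\P$ inside a block of $\Q$, i.e.\ $\P\le\Q$. I expect the ring-matching step — verifying that a ring appearing in $\partial E$ either coincides with a boundary ring of the target block $E'$ or can be isotoped into its interior, with the ``no two blocks share a ring'' hypotheses of Definition~\ref{nc def} preventing obstructions — to be the point requiring the most care.
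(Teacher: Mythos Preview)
Your forward direction is fine and matches the paper.  The converse, however, has a genuine gap at exactly the spot you flagged, and your proposed fix does not work.

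The hypothesis $\curve(\P)\subseteq\curve(\Q)$ speaks only about arcs and boundary segments; rings are \emph{not} elements of the curve set.  So ``a ring of $E$ that is interior to $E'$ can be pushed inside'' presupposes what you are trying to prove: you have no mechanism, from the curve-set hypothesis alone, to certify that a ring $U$ in $\partial E$ admits any isotopy representative inside the target block $E'$.  The appeal to the argument of Lemma~\ref{poset} does not help either, because that argument uses sandwiching $E\subseteq E'\subseteq E''$ from \emph{both} sides to trap boundary arcs and rings; here you have only one containment to work with.  The paper's key idea, which your proposal is missing, is to \emph{encode} each ring $U$ of $\partial E$ by an arc $\alpha_U\in\curve(E)$: pick a marked point $p\in E$, run along a path to a point near $U$, go once around $U$, and return near the same path to $p$.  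Since $\alpha_U\in\curve(E)\subseteq\curve(E')$, it has a representative in the interior of $E'$, and tracing close to that representative produces an embedding of $U$ inside $E'$.  This is how the curve-set hypothesis is made to control rings.

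There is a second missing step.  Even once you have embedded all of $\partial E$ (arcs, boundary segments, and rings) inside $E'$, it does not follow formally that $E\subseteq E'$: a priori some boundary component $C_i$ of $E'$ could sit in the interior of $E$.  The paper closes this by contradiction: such a $C_i$ is a ring (the interior of $E$ contains no marked points), and the region cut off by the $C_i$ inside $E$ is topologically nontrivial, so there is an arc $\beta\in\curve(E)$ that exits $E'$ through some $C_i$ and re-enters through some $C_j$.  But then $\beta\in\curve(E)\subseteq\curve(E')$, contradicting the fact that $\beta$ crosses $\partial E'$ essentially.  Your ``isotopy bookkeeping'' paragraph does not supply either of these two arguments.
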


\begin{proof}
One direction is immediate.
For the harder direction, suppose $\curve(\P)\subseteq\curve(\Q)$.
We will show that $\P\le\Q$ by showing that, for any block $E$ of $\P$, there is a block of $\Q$ that contains an embedding of $E$.
If $E$ is degenerate, this is immediate, so assume that $E$ is nondegenerate.
Since $\curve(\P)\subseteq\curve(\Q)$ and since the marked points in $E$ are connected by curves in $\curve(\P)$, all of the marked points in $E$ are contained in some block $E'$ of $\Q$.
Furthermore, every curve in $\curve(E)$ is necessarily in $\curve(E')$, rather than the curve set of some other block. 
 
%
The boundary of $E$ is defined by a finite set of boundary segments, arcs, and rings.
For each ring $U$ in the boundary of $E$, take a marked point $p$ in $E$ and create a curve $\alpha_U\in\curve(E)$ by following some path in $E$ from $p$ to a point very close to $U$, around $U$, and the back near the same path back to $p$.
(If $\alpha_U$, together with a boundary segment, bounds a digon in $\S$, then replace $\alpha_U$ with the boundary segment.)
Since $\curve(E)\subseteq\curve(E')$, there is an embedding of $U$ that is contained in $E'$.
(Start with an embedding of $\alpha_U$ that is contained, except for its endpoints, in the interior of $E'$.
The embedding of $U$ follows $\alpha_U$ closely in the interior of $E'$.)
For the same reason, every boundary segment of $\S$ that is in the boundary of $E$ is contained in $E'$.
Again for the same reason, we can choose isotopy representatives of the arcs of in the boundary of $E$, all of which are contained in $E'$, disjoint from each other except at endpoints.
Thus we can choose an isotopy representative of $E$ so that its boundary is contained in $E'$.

Suppose $E$ is not contained in $E'$.
Since the boundary of $E$ is in $E'$, the boundary of the set $E\cap E'$ is the union (necessarily disjoint) of the boundary of $E$ with some components $C_1,\ldots,C_k$ (with $k\ge 1$) of the boundary of $E'$, with each $C_i$ contained in the interior of $E$.
The interior of $E$ is contained in the interior of $\S$, so in particular it contains no marked points.
Therefore each $C_i$ is a ring. 
Furthermore, since no marked point is in the interior of $E$ and since the boundary of $E$ is contained in $E'$, if we pass from out of $E'$ through some $C_i$, we do not reach a marked point before re-entering $E'$ through some $C_j$ (possibly with $i=j$).
We see that the $C_i$ bound a subset of $\S$ without marked points that is topologically nontrivial.
Thus there is an arc $\beta$ that leaves $E'$ through some $C_i$, stays in $E$, and returns to $E'$ through some $C_j$ (possibly with $i=j$).
(Certainly there is such a non-self-intersecting curve disjoint from the boundary.
The nontrivial topology outside of the $C_i$ means that there is such a curve that does not bound a monogon or combine with a boundary segment to bound a digon.)

Since $\curve(E)\subseteq\curve(E')$, we see that $\beta$ is in $E'$, contradicting the construction of $\beta$ to cross boundary components of $E'$, with crossings that can't be removed by isotopy.
This contradiction proves that $E\subseteq E'$, as desired.
\end{proof}

\begin{prop}\label{curve set det}
A noncrossing partition of $(\S,\M)$ is determined uniquely (up to isotopy) by its curve set.
\end{prop}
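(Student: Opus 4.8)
The plan is to deduce this directly from Proposition~\ref{curve set le} together with the antisymmetry of $\NC_{(\S,\M)}$ recorded in Lemma~\ref{poset}. Suppose $\P$ and $\Q$ are noncrossing partitions of $(\S,\M)$ with $\curve(\P)=\curve(\Q)$. Since a curve set is, by Definition~\ref{curve set def}, literally a set of (isotopy classes of) boundary segments and arcs of $(\S,\M)$, the equality $\curve(\P)=\curve(\Q)$ is the same as the conjunction of the inclusions $\curve(\P)\subseteq\curve(\Q)$ and $\curve(\Q)\subseteq\curve(\P)$. Applying Proposition~\ref{curve set le} in each direction then yields $\P\le\Q$ and $\Q\le\P$, and antisymmetry of the poset $\NC_{(\S,\M)}$ forces $\P=\Q$; that is, the two partitions coincide up to ambient isotopy, which is exactly the asserted uniqueness.

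I do not anticipate any genuine obstacle here, since the topological substance — reconstructing the embedding of each nondegenerate block, and in particular recovering each ring and each arc in its boundary from the curves contained in it — has already been carried out inside the proof of Proposition~\ref{curve set le}. The only point worth flagging is the degenerate edge case in which neither $\P$ nor $\Q$ contains any arc or boundary segment in any block, so that both curve sets are empty; the argument above still applies verbatim and in particular shows there is at most one noncrossing partition with empty curve set. (An alternative, self-contained route would be to repeat the block-by-block reconstruction argument of Proposition~\ref{curve set le} directly, but passing through the partial order is shorter and avoids duplicating that work.)
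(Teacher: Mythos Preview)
Your proof is correct and follows essentially the same route as the paper: apply Proposition~\ref{curve set le} in both directions to get $\P\le\Q\le\P$, then invoke the antisymmetry established in Lemma~\ref{poset}. The paper's argument is just a terser version of what you wrote.
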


\begin{proof}
If $\P$ and $\Q$ have the same curve set, then Proposition~\ref{curve set le} says that $\P\le\Q\le\P$.
The anti-symmetry in Proposition~\ref{poset} then implies that $\P=\Q$.
\end{proof}

\begin{remark}\label{equiv rel remark}
Proposition~\ref{curve set det} is the analog, for surfaces, of the easy and well known fact that set partitions determine equivalence relations and vice versa.
Whereas a set partition of $\M$ is determined by the set of pairs $p,q\in\M$ such that $p$ and $q$ are in the same block, a noncrossing partition $\P$ of $(\S,\M)$ is determined by the set of pairs $p,q\in\M$ such that $p$ and $q$ are in the same block of $\P$, together with the set of all ways that $p$ and $q$ are connected by arcs/boundary segments contained in that block.
\end{remark}

We now proceed with some definitions that will allow us to describe cover relations.

\begin{definition}[\emph{Curve union of embedded blocks}]\label{curve union}
Suppose $E$ and $E'$ are embedded blocks, either disjoint or equal.
Suppose $\alpha$ is an arc or boundary segment that has no isotopy representative inside $E$ or inside $E'$, but that has an isotopy representative that starts inside $E$, leaves $E$, then enters $E'$ and stays there until it ends there.
We write $E\cup\alpha\cup E'$ for the \newword{curve union of $E$ and $E'$ along $\alpha$}, which is the smallest embedded block containing $E$, $E'$, and $\alpha$, constructed as follows:
If $E$ and $E'$ are distinct points, then $E\cup\alpha\cup E'=\alpha$.
If $E=E'$ is a point, then $E\cup\alpha\cup E'$ is an annulus bounded by $\alpha$ and a ring that follows close to~$\alpha$.
In other cases, $E\cup\alpha\cup E'$ is essentially the union of $E$ and $E'$ with a ``thickened'' version of $\alpha$ (a suitably chosen digon bounded by $\alpha$ and arcs isotopic to $\alpha$, or bounded by $\alpha$ and a curve isotopic to $\alpha$ if $\alpha$ is a boundary segment).
If $E$ and/or $E'$ is a single arc or boundary segment, they might also need to be ``thickened'' in a similar way.
However, this union may fail to be an embedded block because one or more components of its boundary might not be a ring in $(\S,\M)$, an empty boundary component of $(\S,\M)$, or a finite union of arcs and/or boundary segments of $(\S,\M)$, or because the rings in its boundary might not be distinct up to isotopy.
We attach additional pieces of $\S$ to the union to correct these failures.
Specifically, if some component is a nontrivial closed curve in the interior of $\S$ that, together with an empty boundary component of $(\S,\M)$, bounds an annulus in $\S$, then we attach that annulus to the union.  
Also, if some component is composed of arcs and/or boundary segments and some other curves connecting marked points, such that each of these other curves, together with a boundary segment, bounds a digon, then we attach all such digons.
Finally, if two rings in the boundary are isotopic, then we attach the annulus between them to the union.
\end{definition}

\begin{example}\label{curve union ex}
Figure~\ref{curve union fig} shows two examples of curve unions, one example in each row of pictures.
In every picture, $\S$ is a torus with an open disk cut out and there are two marked points on the boundary.
The torus is a square with opposite sides identified as usual, but we have omitted the usual markings for identifying opposite sides.
In each case, the left picture shows the curve $\alpha$ (dotted) and the middle picture shows the union of a thickened $\alpha$ with the blocks it connects and also shows a striped area that must be added to form the curve union. 
The right picture shows the curve union.
In the first row, the striped area is a digon bounded by the thickened union and a boundary segment.
In the second row (which continues the first row), the striped area is an empty annulus between two components of the boundary of the thickened union, and the curve union is all of $\S$.

In both examples, the curve $\alpha$ connects a block to itself.
Later, in Example~\ref{aug ex} (the left and middle pictures of Figure~\ref{aug fig}), we illustrate the curve union in a case where $\alpha$ connects two distinct blocks.
\end{example}

\begin{figure}
\scalebox{1}{\includegraphics{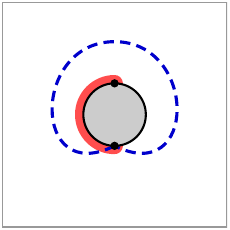}}\quad
\scalebox{1}{\includegraphics{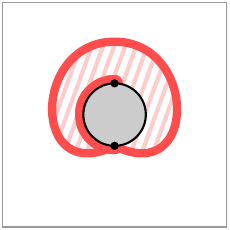}}\quad
\scalebox{1}{\includegraphics{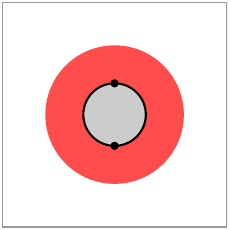}}\\[25pt]
\scalebox{1}{\includegraphics{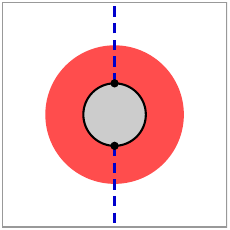}}\quad
\scalebox{1}{\includegraphics{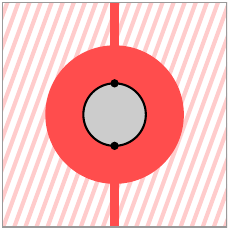}}\quad
\scalebox{1}{\includegraphics{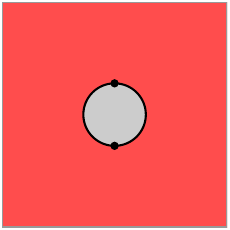}}
\caption{Examples of curve unions in a torus}
\label{curve union fig}
\end{figure}

\begin{definition}[\emph{Simple connector and augmentation}]\label{simp conn}
Suppose $\P$ is a noncrossing partition of $(\S,\M)$.
We say an arc or boundary segment $\alpha$ is a \newword{simple connector} for $\P$ if there are two embedded blocks $E$ and $E'$ of $\P$ (possibly with $E=E'$) such that $\alpha$ has no isotopy representative inside $E$ or inside $E'$, but has an isotopy representative that starts inside $E$, leaves $E$, \emph{intersects no other block of $\P$}, then enters $E'$ and stays there until it ends there.
If $\alpha$ is a simple connector for $(\S,\M)$, then the \newword{augmentation of $\P$ along $\alpha$}, written $\P\cup\alpha$, is essentially the noncrossing partition obtained by replacing $E$ and $E'$ by the curve union $E\cup\alpha\cup E'$, with the isotopy representative of~$\alpha$ chosen so that $E\cup\alpha\cup E'$ does not intersect any other blocks of $\P$.
However, the result of replacing $E$ and $E'$ by $E\cup\alpha\cup E'$ may fail to be a noncrossing partition because $E\cup\alpha\cup E'$ may have rings in its boundary that are also isotopic to rings in boundaries of other blocks $E''$ of $\P$.
In that case, the augmentation also adjoins annuli to combine $E\cup\alpha\cup E'$ with all such blocks $E''$.
\end{definition}

\begin{example}\label{aug ex}
The two examples of Figure~\ref{curve union fig} (Example~\ref{curve union ex}) also serve as examples of augmentation along a simple connector.  
Figure~\ref{aug fig} illustrates the additional step that may be neccessary to augment along a simple connector.  
The left picture (familiar from Figure~\ref{nc ex fig}) shows a noncrossing partition $\P$ of a sphere with three disks removed and a simple connector~$\alpha$, shown dotted.
The middle picture shows the noncrossing partition with two of its blocks replaced by their curve union along~$\alpha$.
The curve union has a ring in its boundary that is also in the boundary of another block of $\P$.
The annulus between the two blocks is striped.
The augmentation $\P\cup\alpha$ is the entire surface $\S$, as indicated in the right picture.
\end{example}

\begin{figure}
\scalebox{0.5}{\includegraphics{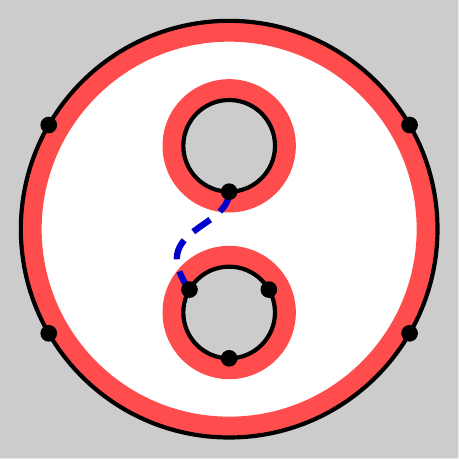}}\quad
\scalebox{0.5}{\includegraphics{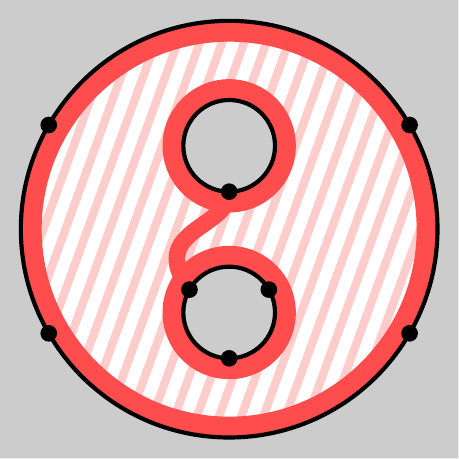}}\quad
\scalebox{0.5}{\includegraphics{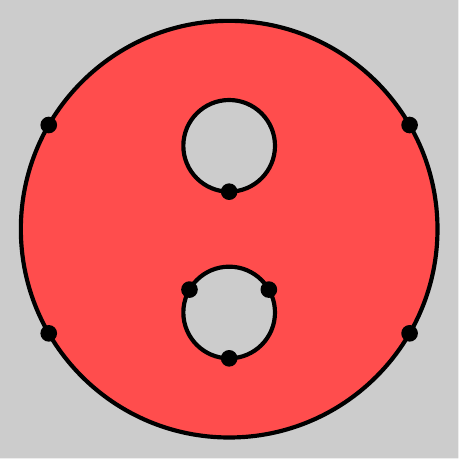}}
\caption{An example of augmentation along a simple connector}
\label{aug fig}
\end{figure}

\begin{lemma}\label{alpha does it}
Suppose $\P$ is a noncrossing partition of $(\S,\M)$ and suppose $\alpha$ is a simple connector for $\P$.
If $\R$ is a noncrossing partition of $(\S,\M)$ with $\P\le\R$ and $\alpha\in\curve(\R)$, then $\P\cup\alpha\le\R$.
\end{lemma}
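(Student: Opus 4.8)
The plan is to reduce everything to the characterization of the order by curve sets (Proposition~\ref{curve set le}) and then to the ``smallest embedded block'' property built into the curve union (Definitions~\ref{curve union} and~\ref{simp conn}). By Proposition~\ref{curve set le}, the conclusion $\P\cup\alpha\le\R$ is equivalent to $\curve(\P\cup\alpha)\subseteq\curve(\R)$, so that is what I would prove. Write $E,E'$ for the blocks of $\P$ connected by $\alpha$ and $G$ for the block of $\P\cup\alpha$ that replaces them: the curve union $E\cup\alpha\cup E'$, possibly enlarged by annuli to absorb other blocks $E''_1,\dots,E''_m$ of $\P$ whose boundary rings coincide with rings of $E\cup\alpha\cup E'$. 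Every block of $\P\cup\alpha$ other than $G$ is a block of $\P$, so its curve set lies in $\curve(\P)\subseteq\curve(\R)$, while $\curve(G)\supseteq\curve(E)\cup\curve(E')\cup\bigcup_j\curve(E''_j)$; thus $\curve(\P\cup\alpha)=\curve(G)\cup(\text{a set contained in }\curve(\P))$, and it suffices to show $\curve(G)\subseteq\curve(\R)$. For that, it is enough to show that $G$ has an isotopy representative contained in a single block $F$ of $\R$, since then $\curve(G)\subseteq\curve(F)\subseteq\curve(\R)$.

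To identify $F$: since $\P\le\R$, fix embeddings in which each block of $\P$ lies in a block of $\R$, and let $F_E,F_{E'}$ be the blocks of $\R$ containing $E,E'$. The endpoints of $\alpha$ are marked points $p\in E$ and $q\in E'$, and these are fixed by every ambient isotopy (Definition~\ref{ambient def}). Since $\alpha\in\curve(\R)$, $\alpha$ is isotopic in $\S$ to a curve contained in some block of $\R$; being closed, that block contains $p$ and $q$, hence equals $F_E$ and $F_{E'}$. So $F:=F_E=F_{E'}$, and $\alpha$ is isotopic into $F$. For each $E''_j$, choose a curve in $\curve(E''_j)$ encircling the relevant boundary ring $U_j$ (exactly as in the proof of Proposition~\ref{curve set le}); this curve lies in $\curve(\P)\subseteq\curve(\R)$ with its endpoint(s) in $E''_j$, so the block of $\R$ containing $E''_j$ contains that curve, and following the curve closely shows $E''_j$ is isotopic into that same block — which, sharing the endpoint with $F$, is $F$. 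Hence $F$ contains isotopy representatives of $E$, $E'$, $\alpha$, and every $E''_j$.

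Finally, by the construction in Definition~\ref{curve union} and the augmentation step of Definition~\ref{simp conn}, $G$ is the \emph{smallest} embedded block of $(\S,\M)$ containing $E$, $E'$, $\alpha$, and the $E''_j$: it is built from the (thickened) union of these by attaching only the pieces that any embedded block containing that union is forced to contain — an empty annulus bounded with an empty boundary component of $\S$, a digon bounded with a boundary segment, or an annulus between two isotopic rings (the latter forced because, by Definition~\ref{embedded def}, no embedded block may have two isotopic rings in its boundary). Since $F$ is an embedded block containing a suitable simultaneous representative of $E$, $E'$, $\alpha$, $E''_j$, and since $F$ itself satisfies the boundary conditions of Definition~\ref{embedded def}, each forced correction takes place inside $F$; therefore $G\subseteq F$ up to isotopy, giving $\curve(G)\subseteq\curve(F)\subseteq\curve(\R)$ and hence $\P\cup\alpha\le\R$.

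The step I expect to be the main obstacle is arranging the ingredients $E$, $E'$, $\alpha$ (and the $E''_j$) \emph{simultaneously} inside $F$ in the ``simple connector'' configuration from which the curve union is actually built — each of them individually embeds in $F$, but one must upgrade this to a single embedding of the whole configuration in $F$ (equivalently, show that $\alpha$ persists as an arc of the marked surface $F$ joining $E$ to $E'$ in the required manner). This, together with checking that the ``forced'' annuli and digons of the curve-union construction cannot escape $F$, is where the genuine topological work lies; the rest is bookkeeping with curve sets via Proposition~\ref{curve set le}.
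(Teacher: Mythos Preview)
Your overall strategy matches the paper's: locate a single block $F$ of $\R$ and show the augmented block $G$ lies in it, then conclude. The detour through Proposition~\ref{curve set le} is more machinery than the paper uses (it works directly from the definition of $\le$ as block containment), but that part is fine.

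There is one genuine misfire, in the paragraph handling the extra blocks $E''_j$. The curve you build in $\curve(E''_j)$ encircling $U_j$ has its marked-point endpoints in $E''_j$, not in $E$ or $E'$; it only witnesses that $E''_j$ lies in \emph{some} block $F'$ of $\R$, and gives no link to $F$. The clause ``which, sharing the endpoint with $F$, is $F$'' has no referent: $U_j$ is a ring and carries no marked points, so no curve you produce this way can have an endpoint in $F$. The argument does not establish $F'=F$.

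The paper closes this gap in one line using Definition~\ref{nc def} rather than curves: $E\cup\alpha\cup E'\subseteq F$ and $E''_j\subseteq F'$ sit on opposite sides of the ring $U_j$; if $F\ne F'$, then (being disjoint blocks of $\R$) each would have a boundary ring isotopic to $U_j$, violating the ``no two blocks share a ring'' clause of Definition~\ref{nc def}. Hence $F'=F$. With that replacement your proof goes through; the simultaneous-embedding worry you flag at the end is real but is exactly what the ``smallest embedded block'' description of the curve union is designed to absorb, and the paper treats it just as tersely.
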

\begin{proof}
Suppose $E$ and $E'$ are the two blocks (possibly coinciding) of $\P$ that are connected by $\alpha$.
Because $\P\le\R$ and $\alpha\in\curve(\R)$ and since $\alpha$ connects a marked point in $E$ to a marked point in $E'$, there is a block of $\R$ containing the curve union $E\cup\alpha\cup E'$.
By Definition~\ref{nc def}, the same block of $\R$ must also contain every block of $\P$ that has a boundary ring isotopic to a boundary ring of $E\cup\alpha\cup E'$.
Thus $\P\cup\alpha\le\R$.
\end{proof}

\begin{prop}\label{covers}
Two noncrossing partitions $\P,\Q\in\NC_{(\S,\M)}$ have $\P\covered\Q$ if and only if there exists a simple connector $\alpha$ for $\P$ such that $\Q=\P\cup\alpha$.
\end{prop}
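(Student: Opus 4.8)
The plan is to prove both implications using the curve-set criterion (Proposition~\ref{curve set le}), with the work concentrated in one claim I will call the \emph{Connector Lemma}: \emph{if $\R\ge\P$ in $\NC_{(\S,\M)}$ and $\beta$ is an arc or boundary segment lying in $\curve(\R)\setminus\curve(\P)$, then $\beta$ is a simple connector for $\P$.} I also record the elementary fact that $\P<\P\cup\alpha$ for every simple connector $\alpha$ of $\P$: one has $\P\le\P\cup\alpha$ straight from the definitions, and $\alpha\in\curve(\P\cup\alpha)\setminus\curve(\P)$ — indeed the endpoints of $\alpha$ lie in the blocks $E,E'$ that $\alpha$ joins and $\alpha$ has no representative inside either, so $\alpha$ lies in no block's curve set — whence $\P\neq\P\cup\alpha$ by Proposition~\ref{curve set det}. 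Granting the Connector Lemma, both directions are short. If $\P\covered\Q$, then $\P<\Q$, so Propositions~\ref{curve set le} and~\ref{curve set det} give a curve $\beta\in\curve(\Q)\setminus\curve(\P)$; the Connector Lemma makes $\beta$ a simple connector for $\P$, the elementary fact gives $\P<\P\cup\beta$, and Lemma~\ref{alpha does it} gives $\P\cup\beta\le\Q$, so $\Q=\P\cup\beta$ since nothing lies strictly between. Conversely, with $\alpha$ a simple connector for $\P$ and $\Q=\P\cup\alpha$, we know $\P<\Q$ and must exclude $\P<\R<\Q$.

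Given such an $\R$, pick $\beta\in\curve(\R)\setminus\curve(\P)$ (nonempty since $\P<\R$); by the Connector Lemma $\beta$ is a simple connector, and by Lemma~\ref{alpha does it} $\P\cup\beta\le\R\le\Q$. Because $\beta\in\curve(\Q)\setminus\curve(\P)$, it lies in the unique block $F$ of $\Q$ that is not already a block of $\P$ — the curve union at the core of $\Q$ — so the two blocks of $\P$ joined by $\beta$ are among the constituents of $F$. The remaining point is that a curve union is \emph{saturated}: it is regenerated by its constituent blocks together with any connecting curve it contains. Granting this, $\alpha\in\curve(\P\cup\beta)$, and a second application of Lemma~\ref{alpha does it} yields $\Q=\P\cup\alpha\le\P\cup\beta\le\R\le\Q$, forcing $\R=\Q$, the desired contradiction. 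The saturation statement follows from the minimality in Definition~\ref{curve union} and the connectedness of the core $F$ after deleting its constituent blocks; alternatively one can restrict to $F$ via Proposition~\ref{lower} and argue there.

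It remains to prove the Connector Lemma, which is where I expect the real difficulty. Write the endpoints of $\beta$ as marked points $x\in E$ and $y\in E'$ (blocks of $\P$, possibly equal); since $\beta\notin\curve(\P)$, $\beta$ has no representative inside $E$ or inside $E'$. Fix an embedding of $\R$ with $\beta$ inside one block $G$ of $\R$ and, using $\P\le\R$, embed $\P$ with every block inside a block of $\R$, so $E,E'\subseteq G$; any block of $\P$ contained in a block of $\R$ other than $G$ is then disjoint from $\beta$. What remains is to isotope $\beta$, fixing its endpoints and keeping its interior in $G$, to avoid the finitely many blocks $D_1,\dots,D_m$ of $\P$ that lie inside $G$, each disjoint from $E$, from $E'$, and from the arc and boundary-segment parts of $\partial G$. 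I would put $\beta$ in minimal position with respect to each $D_i$, remove the inessential components of $\beta\cap D_i$ by isotopies across disks, and dispose of the essential ones using that $G\setminus\bigcup_i D_i$ is connected and still contains $E$ and $E'$ — a consequence of the $D_i$ being blocks of a noncrossing partition inside $G$, hence bounded by rings, arcs, and boundary segments that cannot separate two blocks joined by a curve of $\curve(G)$. Any representative of $\beta$ produced this way meets the definition of a simple connector for $\P$. The main obstacle, then, is this isotopy argument together with the saturation statement; everything else is bookkeeping with curve sets and Lemma~\ref{alpha does it}.
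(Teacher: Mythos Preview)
Your Connector Lemma is false, and this breaks both directions of your argument.  Take $\S$ a disk with four boundary marked points $a,b,c,d$ in cyclic order, let $\P$ have blocks the singletons $\{b\}$, $\{d\}$ and the arc joining $a$ to $c$, and let $\R$ be the top element (the whole disk).  The arc $\beta$ from $b$ to $d$ lies in $\curve(\R)\setminus\curve(\P)$, but it is \emph{not} a simple connector for $\P$: the block $ac$ separates the disk, with $b$ on one side and $d$ on the other, so no isotopy representative of $\beta$ can avoid it.  Your justification that ``$G\setminus\bigcup_i D_i$ is connected'' fails exactly here --- removing a curve block from a disk can disconnect it, and more generally a block bounded by arcs can separate two marked points of $G$.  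The same obstruction appears in the forward direction: with $\P<\R\le\Q$ you pick $\beta\in\curve(\R)\setminus\curve(\P)$ and immediately invoke the Connector Lemma to form $\P\cup\beta$, but $\beta$ need not be a simple connector, so $\P\cup\beta$ is not even defined.

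The paper's proof avoids this by never claiming that $\beta$ itself is a simple connector.  For the converse direction ($\P\covered\Q\Rightarrow\Q=\P\cup\alpha$), it follows $\beta$ from its endpoint in $E$ only until the \emph{first} time it enters another block $E'$ of $\P$, and then reroutes inside $E'$ to a marked point of $E'$; the resulting curve $\alpha$ is a simple connector by construction, and one finishes with Lemma~\ref{alpha does it}.  For the forward direction ($\Q=\P\cup\alpha\Rightarrow\P\covered\Q$), the paper does not try to show that an arbitrary $\beta\in\curve(\R)\setminus\curve(\P)$ regenerates the curve union; instead it analyzes where such a $\beta$ can possibly lie inside $\Q$ --- either along the thickened $\alpha$ itself, or across one of the annuli adjoined in Definition~\ref{curve union} or Definition~\ref{simp conn} --- and in each case exhibits explicitly a curve in $\curve(\R)$ isotopic to $\alpha$.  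Your ``saturation'' statement is morally this, but it requires the same case analysis the paper carries out; it does not follow from minimality or connectedness alone.
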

\begin{proof}
First, suppose there exists a simple connector $\alpha$ for $\P$ such that $Q={P\cup\alpha}$ and let $E$ and $E'$ be the two blocks (possibly coinciding) of $\P$ that are connected by $\alpha$.
Certainly $\P<\Q$.
Suppose $\R$ is a noncrossing partition with $\P<\R\le\Q$.
By Lemma~\ref{alpha does it}, we can show that $\R=\Q$ by showing that $\alpha\in\curve(\R)$.

By Proposition~\ref{curve set det}, there exists a curve $\beta$ in $\curve(\R)\setminus\curve(\P)$.
If (up to isotopy) $\beta$ follows along the part of $\alpha$ not in $E$ or $E'$, then certainly $\alpha\in\curve(\R)$.
If $\beta$ does not follow along $\alpha$, then there are two similar possibilities:
Either $\beta$ passes through an annulus that was adjoined to the union of the thickened $\alpha$ with $E$ and $E'$ to remove isotopic boundary rings as part of the construction of the curve union, or~$\beta$ passes through an annulus that was adjoined to connect the curve union to an embedded block $E''$ with which it shared a boundary ring.
These two possibilities are argued together by allowing $E''=E$ or $E''=E'$ or both in what follows.
For either possibility, the thickened~$\alpha$ contains a part or parts of a ring $U$ such that the other part of $U$ is contained in $E\cup E'$ and such that a ring $U'$ isotopic to $U$ is a component of the boundary of $E''$.

If $E=E'$, then one part of $U$ is in the thickened $\alpha$, as exemplified in the middle picture of Figure~\ref{tworings fig}, where $U$ is the inner boundary of the striped annulus.
The curve $\beta$ passes through $U$ to leave $E$ and enters $E''$ through~$U'$.
The part of $\alpha$ that is outside $E$ (and $E''$) is isotopic to a curve contained in $\R$, which follows $\alpha$ to one intersection of $\alpha$ with the boundary of $E$, passes along the boundary of $E$, follows $\beta$ to $U'$, follows around $U'$, returns along $\beta$, and follows around the boundary of $E$ to the other intersection with $\alpha$, then follows $\alpha$ to its other endpoint, as illustrated in Figure~\ref{tworings fig}.
\begin{figure}
\scalebox{1}{\includegraphics{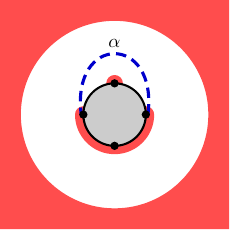}}\quad
\scalebox{1}{\includegraphics{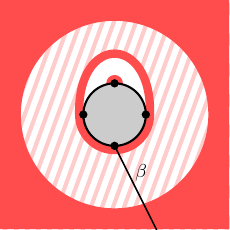}}\quad
\scalebox{1}{\includegraphics{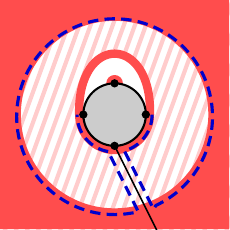}}
\caption{An illustration for the proof of Proposition~\ref{covers}}
\label{tworings fig}
\end{figure}
Thus $\alpha\in\curve(\R)$.

If $E\neq E'$, then two parts of $U$ are in the thickened $\alpha$ but not in $E\cup E'$, as exemplified in the left picture of Figure~\ref{gfig}, which shows the thickened $\alpha$ and the curve $\beta$, along with the rings $U$ (inner dotted) and $U'$ (outer dotted).
The curve $\beta$ passes through $U$ to leave (without loss of generality) $E$ and enters $E''$ through~$U'$.
In this case, we construct two isotopic rings that are contained in blocks of $\R$ and that are connected by $\alpha$.
Since $U$ passes twice along the thickened $\alpha$, we can remove those passes along $\alpha$ to make two rings contained respectively in the boundaries of $E$ and $E'$, shown (along with $U'$ again) in the middle picture of Figure~\ref{gfig}.
Then, using $\beta$, we can combine the ring in $E$ with the ring in $U'$ to create a ring $U''$ isotopic to the ring in $E'$.
The ring $U''$, together with the isotopic ring in $E'$, is shown in Figure~\ref{gfig}.
\begin{figure}
\scalebox{0.5}{\includegraphics{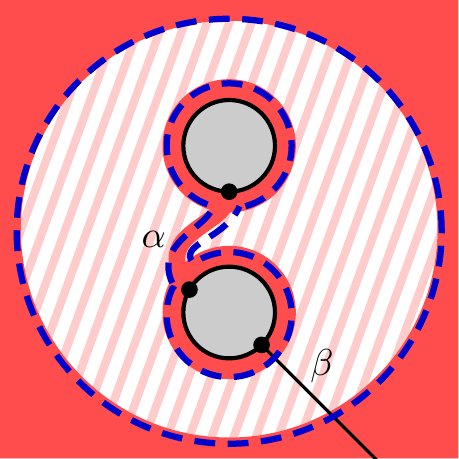}}\quad
\scalebox{0.5}{\includegraphics{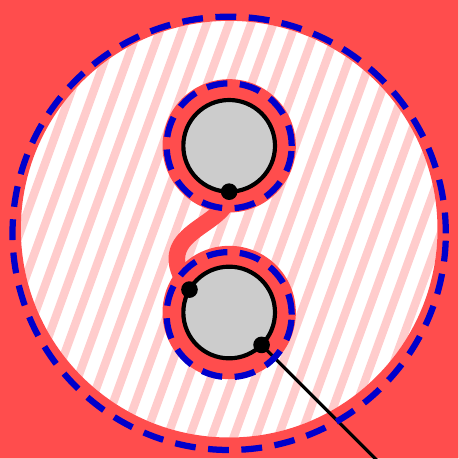}}\quad
\scalebox{0.5}{\includegraphics{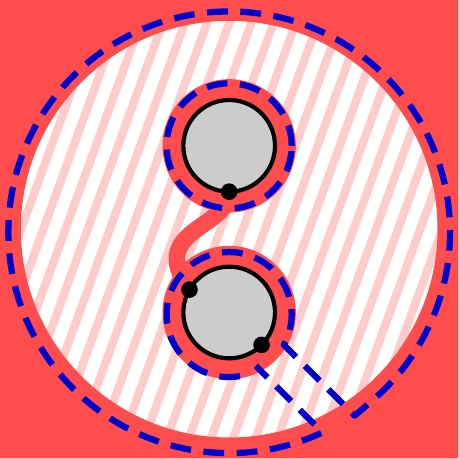}}
\caption{Another illustration for the proof of Proposition~\ref{covers}}
\label{gfig}
\end{figure}
Since $U''$ and the ring in $E'$ are isotopic and are in blocks of $\R$, they are in the same block, along with the annulus between them.
Thus $\alpha\in\curve(\R)$.

We see that $\alpha\in\curve(\R)$ in every case, and we have proved that $\P\covered\Q$.

Conversely, suppose $\P\covered\Q$.
Then Proposition~\ref{curve set det} says in particular that there exists a curve $\beta\in\curve(\Q)\setminus\curve(\P)$.
Choose representatives of $\P$ and $\Q$ such that every block of $\P$ is contained in a block of $\P$ and choose a representative of~$\beta$ that is contained in a block of $\Q$.
Since $\beta\not\in\curve(\P)$, the representative of~$\beta$ is not contained in a block of $\P$.
Let $E$ be the block containing one endpoint of~$\beta$.
Let $E'$ be the first block of $\P$ that $\beta$ enters after first leaving $E$ (not ruling out the possibility that $E'=E$), and assume that the intersection of $\beta$ with $E'$ cannot be removed by choosing a different isotopy representative of~$\beta$.
Let $\alpha$ be a curve that follows $\beta$ from $E$ into $E'$ and then stays in $E'$ to end at some marked point in $E'$.
(Possibly, $\alpha=\beta$.)
Then $\alpha$ is a simple connector for $\P$, so that $\P\covered \P\cup\alpha$.
Lemma~\ref{alpha does it} implies that $\P\cup\alpha\le\Q$, but since $\P\covered\Q$, we conclude that $\Q=\P\cup\alpha$.
\end{proof}

\begin{proof}[Proof of Theorem~\ref{graded}]
The formula for the rank function is correct when $\P$ is the bottom element (each marked point is a singleton block), so we need to verify that going up by a cover relation adds $1$ to $b_1(\P)-b_0(\P)$.
Suppose $\alpha$ is a simple connector for $\P$, connecting blocks $E$ and $E'$.

First, we compute the change in $b_1-b_0$ when replacing $E$ and $E'$ by $E\cup\alpha\cup E'$:
If $E\neq E'$, then $b_1$ is unchanged and $b_0$ decreases by exactly $1$.
If $E=E'$, then $b_0$ is unchanged, while $b_1$ increases by exactly $1$, with the new circle in $E\cup\alpha\cup E'$ starting in $E$ and following $\alpha$ back to $E$.

Next, we compute the effect of joining $E\cup\alpha\cup E'$ to another block or to itself because of shared boundary rings.
If $E\cup\alpha\cup E'$ is joined to itself, then $b_0$ does not change, and $b_1$ loses $1$ (because the boundary ring is only counted once after the joining) but also gains $1$ (the new circle where $E\cup\alpha\cup E'$ connects to itself).
If $E\cup\alpha\cup E'$ is joined to another block, then $b_0$ is decreased by 1, but also $b_1$ decreases by $1$ because the boundary ring is counted twice (once in each block) before the blocks are combined but only once in the combined block.
In either case, $b_1-b_0$ is unchanged.

In all, the quantity $b_1-b_0$ increases by exactly $1$ when passing from $\P$ to $\P\cup\alpha$.
\end{proof}

Our next goal is to prove Theorem~\ref{lattice}, which says that $\NC_{(\S,\M)}$ is a complete lattice.

\begin{definition}[\emph{Compatible arcs}]\label{compat arc def}
Two arcs are \newword{compatible} if there exists an isotopy representative of each such that the representatives do not intersect, except possibly at their endpoints.
Maximal pairwise compatible sets of arcs are called \newword{triangulations}.
\end{definition}

We do not need triangulations in this paper, except for the fact that all triangulations of $(\S,\M)$ have the same finite cardinality.
When there are no empty boundary components, as a special case of \cite[Proposition~2.10]{cats1}, a triangulation has $6g+3b+|\M|-6$ arcs, where $b$ is the number of boundary components and $g$ is the genus of~$\S$.
(By convention, $g$ is the genus of the surface without boundary obtained by filling in all boundary components with disks.)
The following is a minor modification of \cite[Proposition~2.10]{cats1}.

\begin{proposition}\label{tri}
If $(\S,\M)$ has genus $g$ and has $b$ boundary components, $e$ of which are empty, then a triangulation of $(\S,\M)$ has $6g+3b-e+|\M|-6$ arcs.
\end{proposition}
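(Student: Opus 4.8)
The plan is to reduce to the case with no empty boundary components, where we may invoke \cite[Proposition~2.10]{cats1} (which gives $6g+3b+|\M|-6$ arcs when $e=0$), by a surgery that trades each empty boundary component for extra marked data in a controlled way. First I would observe that an empty boundary component, being a circle with no marked points, has in its neighborhood exactly one ring up to isotopy (the core circle of a collar annulus), and any arc of $(\S,\M)$ is isotopic to one meeting a fixed collar of that component in at most a controlled configuration. The cleanest implementation: glue a disk $D_i$ onto each empty boundary component $\partial_i$ and place a single new marked point $p_i$ in the interior of $D_i$ — but since our framework has no punctures, instead I would glue on a once-marked disk along the boundary only, i.e.\ cap off $\partial_i$ with a disk carrying one boundary marked point is not possible (the boundary is already closed), so the right move is to compare triangulations directly.

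Concretely, here is the route I would actually carry out. Let $e$ be the number of empty boundary components. Capping each empty boundary component with a disk produces a surface $\S'$ of the same genus $g$ with $b' = b - e$ boundary components, all nonempty, and the same marked point set $\M$ (of size $|\M|$); note $\S'$ still satisfies the non-degeneracy hypotheses since $|\M|$ is unchanged and $\S'$ is not a once- or twice-marked disk (the surgery only removes boundary circles). By \cite[Proposition~2.10]{cats1}, a triangulation of $(\S',\M)$ has $6g + 3b' + |\M| - 6 = 6g + 3(b-e) + |\M| - 6$ arcs. The key step is then to set up a bijection (up to isotopy) between triangulations of $(\S,\M)$ and triangulations of $(\S',\M)$ \emph{together with} the auxiliary data needed to remember the $e$ capped-off circles. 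Each capping disk $D_i$, once it appears in a triangulation of $\S'$, is cut out by triangulation arcs; undoing the cap reopens $\partial_i$, and in $(\S,\M)$ one must add exactly three arcs near $\partial_i$ to complete a triangulation there — intuitively, a triangulated once-punctured disk minus the puncture, or more carefully: in $(\S,\M)$ the boundary component $\partial_i$ forces, in any triangulation, a "lasso" configuration (a ring isotopic to $\partial_i$ is not an arc, but arcs wrapping around $\partial_i$ must be present), and counting shows the deficit relative to $\S'$ is exactly $3$ arcs per empty component. This yields: a triangulation of $(\S,\M)$ has $\bigl(6g + 3(b-e) + |\M| - 6\bigr) + 3e = 6g + 3b - e + |\M| - 6$ — wait, that gives $+3e$; so I must instead track that capping \emph{removes} arcs: a triangulation of $\S$ has $3e$ fewer arcs than one would naively expect, giving $6g+3b+|\M|-6 - (\text{correction})$. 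Let me restate the bookkeeping precisely in the writeup so that the net correction comes out to $-e$: each empty boundary component contributes, relative to the "all boundary components marked" count $6g+3b+|\M|-6$, a deficit of $e$ arcs total, because an empty boundary component behaves like a boundary component carrying "zero" marked points where the formula implicitly budgets for the structure of a marked one.

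The honest version I would write: perform the cap-off to get $\S'$ with count $6g+3(b-e)+|\M|-6$; then argue that reopening one empty boundary component $\partial_i$ inside a triangulation of $\S'$ requires subdividing whichever triangle(s) $D_i$ was glued into, and a direct local count (a disk with the interior circle $\partial_i$ drilled out, no marked points on $\partial_i$, triangulated relative to the three surrounding marked points) shows exactly $2$ new arcs are needed per empty component — so the total becomes $6g+3(b-e)+|\M|-6+2e = 6g+3b-e+|\M|-6$, as claimed. The main obstacle is precisely pinning down this local count and verifying it is independent of how $D_i$ sat inside the triangulation of $\S'$; this is exactly the content of \cite[Proposition~2.10]{cats1}'s proof technique (Euler-characteristic / flip-connectivity bookkeeping) applied to the local model, so I would either cite that proof's method verbatim or give the one-paragraph Euler characteristic computation: for any triangulation, $\#\text{arcs} = \#\text{interior edges}$, and counting faces, edges, and vertices of the induced CW structure on $\S$ with $\chi(\S) = 2 - 2g - b$ and each face a triangle yields the stated formula directly, with the empty boundary components contributing to $b$ but not to the vertex count, which is what produces the $-e$ term. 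I expect the cleanest final writeup to be this direct Euler characteristic argument rather than the surgery reduction.
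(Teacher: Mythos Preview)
Your proposal cycles through several approaches without committing, and the version you ultimately favor (the Euler-characteristic computation) is left as a one-line sketch. The paper takes a different and much cleaner route: rather than capping off an empty boundary component $U$, it simply \emph{adds a marked point} to $U$. This decreases $e$ by $1$ and increases $|\M|$ by $1$ without changing $\S$, $g$, or $b$, so \cite[Proposition~2.10]{cats1} serves directly as the base case for an induction on $e$. The inductive step is then concrete: any triangulation $T$ of $(\S,\M)$ contains a loop arc based at some marked point $m$ going around $U$; after placing a new marked point on $U$, one extends $T$ to a triangulation of $(\S,\M')$ by adjoining exactly two new arcs from $m$ to the new point, and the arithmetic $\bigl(6g+3b-(e-1)+(|\M|+1)-6\bigr)-2$ finishes.

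Your two alternatives are both viable in principle, but each has a real gap as written. For the capping surgery: you only describe how to drill a hole inside a triangle of a triangulation of $\S'$ to produce \emph{some} triangulation of $\S$ with the claimed count; that does not show \emph{every} triangulation of $\S$ has this count, and you never carry out the reverse direction (start from an arbitrary triangulation of $\S$, cap, and verify exactly which two arcs become redundant per empty component). For the Euler-characteristic argument: the phrase ``each face a triangle'' is false when $e>0$. Near each empty boundary component the triangulation has a non-triangular face, namely the annulus bounded by the loop arc on one side and the empty boundary circle on the other; these $e$ annular faces are not even $2$-cells, so the naive $V-E+F$ with $3F=2n+m$ breaks down. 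Once you establish that the faces of any triangulation are exactly triangles together with $e$ such annuli, the computation does go through and produces the $-e$ term---but that structural fact (the loop arc around each empty boundary, with nothing further inside it) is exactly the same observation the paper's proof also leans on, and you would need to state and justify it.
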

\begin{proof}
This holds by an easy induction on $e$, with \cite[Proposition~2.10]{cats1} serving as the base case.
If $U$ is an empty boundary component of $\S$, then any triangulation~$T$ of $(\S,\M)$ contains an arc that starts at some marked point $m$, follows a curve towards $U$, goes around $U$, and follows near the same curve back to $m$.
We see that if $\M'$ is obtained from $\M$ by putting a marked point on $U$, then we can obtain a triangulation $T'$ of $(\S,\M')$ by adjoining two arcs to $T$, both connecting~$m$ to the new marked point on $U$.
By induction on $e$, the number of arcs in $T'$ is $6g+3b-(e-1)+(|\M|+1)-6$, so $T$ has $6g+3b-e+|\M|-6$ arcs.
\end{proof}

\begin{definition}[\emph{Concatenation of arcs/boundary segments}] \label{concat def}
Suppose $\alpha$ and $\beta$ are distinct (up to isotopy) compatible arcs or boundary segments in $(\S,\M)$, intersecting in at least one endpoint $p$.
(For the purposes of this definition, any two distinct boundary segments are compatible and any arc is compatible with any boundary segment.)
A \newword{concatenation} of $\alpha$ and $\beta$ is an arc or boundary segment~$\gamma$ obtained by following close to $\alpha$ from an endpoint towards $p$, until getting close to~$p$, and then following close to $\beta$ to an endpoint.
If $\beta$ has both endpoints at $p$, then it is possible that there are two concatenations obtained from following $\alpha$ then $\beta$ (corresponding to the two directions to follow $\beta$ from $p$).
It is also possible that there is only one concatenation, because only one direction can yield a non-self-intersecting curve.  
Similar conditions apply if $\alpha$ has both endpoints at $p$.
(This issue is whether one can pass from $\alpha$ to $\beta$, close to~$p$, without crossing $\alpha$ or $\beta$.
If not, then the resulting curve has self-intersections.
This depends on the order in which the ends of $\alpha$ and $\beta$ are arranged around $p$.)
A concatenation $\gamma$ is an arc unless it combines with a boundary segment to bound a digon, in which case, we take $\gamma$ to be that boundary segment.
It is impossible for $\gamma$ to bound a monogon in~$\S$, because $\alpha$ and~$\beta$ are distinct up to isotopy.
If $\alpha$ and $\beta$ are incompatible arcs or do not share an endpoint, then they have no concatenations.
\end{definition}

\begin{definition}[\emph{Resolution of arcs}] \label{res arc def}
Suppose $\alpha$ and $\beta$ are incompatible arcs.
Choose isotopy representatives of $\alpha$ and $\beta$ that avoid unnecessary intersections.
(Specifically, if there are two intersection points of $\alpha$ with $\beta$ such that the part of~$\alpha$ between the two intersections and the part of $\beta$ between the two intersections together bound a disk in $\S$, then choose isotopy representatives that remove these intersections.)
A \newword{resolution} of $\alpha$ and $\beta$ is a non-self-intersecting curve $\gamma$ obtained by following $\alpha$ from an endpoint of $\alpha$ to an intersection of $\alpha$ and $\beta$ and then following $\beta$ to an endpoint.
If we try to make a resolution of $\alpha$ and $\beta$ at a given intersection, it is possible that the result has self-intersections, and thus is not a resolution.
However, if we follow $\alpha$ from an endpoint to its \emph{first} intersection with $\beta$ and resolve there, the result is always a non-self-intersecting curve.

If $\gamma$ fails to be an arc because it combines with a boundary segment to bound a digon, then take $\gamma$ to be that boundary segment.
It is impossible for $\gamma$ to bound a monogon in $\S$ because representatives of $\alpha$ and $\beta$ were chosen to avoid unnecessary intersections.

If $\alpha$ and $\beta$ are not incompatible arcs, then they have no resolutions.
\end{definition}

\begin{prop}\label{closure}
For any noncrossing partition $\P$ of $(\S,\M)$, the set $\curve(\P)$ is closed under concatenations and resolutions, meaning that if $\alpha$ and $\beta$ are in $\curve(\P)$, then any concatenation or resolution of $\alpha$ and $\beta$ is in $\curve(\P)$.
\end{prop}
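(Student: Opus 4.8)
The plan is to reduce to the case of a single block and then show that a concatenation or resolution of curves lying in a block can be realized inside that block.

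Recall that $\curve(\P)=\bigsqcup_i\curve(E_i)$. If $\alpha$ and $\beta$ lie in the curve sets of \emph{distinct} blocks $E\ne E'$, then, since $E$ and $E'$ are disjoint, we may choose representatives of $\alpha$ and $\beta$ that are disjoint; hence $\alpha$ and $\beta$ are compatible and so have no resolutions (Definition~\ref{res arc def}), and, since the endpoints of $\alpha$ lie in $\M\cap E$ while those of $\beta$ lie in $\M\cap E'$, the curves share no endpoint and so have no concatenations (Definition~\ref{concat def}). Likewise, if $\alpha=\beta$ up to isotopy there is nothing to prove. It therefore remains to treat $\alpha,\beta\in\curve(E)$ for a single block $E$ with $\alpha\ne\beta$; here $E$ is necessarily nondegenerate.

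Next I would fix representatives of $\alpha$ and $\beta$ contained in $E$ --- meeting only at a shared endpoint in the concatenation case, and in minimal position in the resolution case (the bigon-removing isotopies can be taken within $E$, by reasoning like that below) --- and form the curve $\gamma'$ prescribed by Definition~\ref{concat def} or~\ref{res arc def}, namely follow close to $\alpha$, then close to $\beta$, within a small neighborhood of $\alpha\cup\beta$ in $E$, so that $\gamma'\subseteq E$. Its endpoints then lie in $\M\cap E$, and it bounds no monogon. If $\gamma'$ is an arc or boundary segment of $(\S,\M)$, then $\gamma=\gamma'$ is the desired concatenation/resolution, and, being an arc or boundary segment of $(\S,\M)$ contained in $E$, it lies in $\curve(E)\subseteq\curve(\P)$ by Definition~\ref{curve set def}.

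The remaining case --- which I expect to be the main obstacle --- is that $\gamma'$, together with a boundary segment $\delta$ of $(\S,\M)$ running between the two endpoints $p,q$ of $\gamma'$, bounds a digon $D$ in $\S$, so that $\gamma=\delta$; one must then show $\delta\subseteq E$. The key point is that $\partial E$ is disjoint from the open disk $\Int D$. Empty boundary components and boundary segments of $(\S,\M)$ lie in $\partial\S$, which is disjoint from $\Int D\subseteq\Int\S$. A ring $R$ in the boundary of $E$ may be taken disjoint from $\gamma'$ (having chosen $\alpha,\beta$, and hence $\gamma'$, off the rings of $\partial E$) and is disjoint from $\delta\subseteq\partial\S$, so $R$ misses $\partial D$; were $R$ to meet $\Int D$ it would lie inside the disk $\Int D$ and hence be trivial, contradicting the definition of a ring. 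An arc $a$ of $(\S,\M)$ appearing in $\partial E$ has its endpoints in $\M$, hence not in $\Int D$, and can be taken disjoint from $\gamma'$ except possibly at $p$ or $q$; so if its interior met $\Int D$, that interior would lie entirely in $\Int D$ with both endpoints in $\{p,q\}$, forcing $a$ and $\delta$ to bound a digon --- impossible for an arc. Hence $\Int D\cap\partial E=\emptyset$; since $\Int D$ is connected and contains points near $\gamma'$ lying in the interior of $E$, it follows that $\Int D\subseteq E$, whence $\delta\subseteq\partial D\subseteq\overline{\Int D}\subseteq E$. Thus $\gamma=\delta\in\curve(E)\subseteq\curve(\P)$ in every case, completing the proof.
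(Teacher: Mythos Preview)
Your argument is correct and follows the same approach as the paper's proof: reduce to a single block $E$ using disjointness of blocks, then realize the concatenation or resolution by a curve lying inside~$E$. Your treatment is in fact more thorough than the paper's on one point---the paper simply asserts that a representative of the concatenation/resolution lies in~$E$, whereas you explicitly handle the case in which the resulting curve $\gamma'$ cobounds a digon with a boundary segment $\delta$ (so that $\gamma=\delta$ by Definitions~\ref{concat def} and~\ref{res arc def}) and verify that $\delta\subseteq E$; the paper leaves this implicit.
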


\begin{proof}
Suppose $\P\in\NC_{(\S,\M)}$.
Since distinct blocks of $\P$ are disjoint, there are no concatenations or resolutions involving arcs/boundary segments from different blocks.
Thus, to show that $\curve(\P)$ is closed, it is enough to show that $\curve(E)$ is closed for each block $E$ of~$\P$.

Suppose $\alpha$ and $\beta$ are distinct arcs/boundary segments in $\curve(E)$ meeting at a point~$p$.
Then in particular, $E$ is nondegenerate, so near $p$, we can pass from $\alpha$ to $\beta$ through the interior of $E$ to construct an isotopy representative inside~$E$ for the contatenation of $\alpha$ and $\beta$.
If $\alpha,\beta\in\curve(E)$ are incompatible arcs, with representatives chosen to be in $E$, and $\gamma$ is a resolution of $\alpha$ and $\beta$, then $\gamma$ is also in $E$.
We have shown that $\curve(E)$ is closed as desired.
\end{proof}

\begin{defcon}[\emph{Embedded intersection of embedded blocks}]\label{embed int def}
We now define a notion of an \newword{embedded intersection} of embedded blocks $E_1$ and $E_2$.
An embedded intersection of $E_1$ and $E_2$ is yet another embedded block.
We say \emph{an} embedded intersection deliberately, because two blocks $E_1$ and $E_2$ may have more than one embedded intersection or may have none.

We continue the notation $\M_E=\M\cap E$ from Definition~\ref{embedded def} and we write $C$ to stand for ${\curve(E_1)\cap\curve(E_2)}$.
Throughout, we use the fact that $C$ is closed under concatenation and resolution.
(By Proposition~\ref{closure}, $C$ is an intersection of sets that are closed under concatenation and resolution.)

If $\M_{E_1}\cap\M_{E_2}$ is empty, then $E_1$ and $E_2$ have no embedded intersections.
If $\M_{E_1}\cap\M_{E_2}\neq\emptyset$, then we define an equivalence relation on $\M_{E_1}\cap\M_{E_2}$ by setting distinct marked points $p$ and $q$ to be equivalent if and only if there exists $\alpha\in C$ with endpoints $p$ and~$q$.
We construct exactly one embedded intersection of $E_1$ and $E_2$ containing each equivalence class in $\M_{E_1}\cap\M_{E_2}$.

Let $X$ be an equivalence class.
We will construct an embedded intersection $E_X$ of $E_1$ and $E_2$ such that $M_{E_X}=X$.
Write $C_X$ for the set of curves in $C$ that have endpoints in $X$.
If $|C_X|=0$, then $X$ is a singleton and $E_X$ is $X$.
If $|C_X|=1$ and $|X|=2$, then $E_X$ coincides with the unique curve in $C$.
If $|C_X|=1$ and $|X|=1$, then $E_X$ is the annulus bounded by the unique curve in $C$ together with the ring in $\S$ that follows that curve. 
Suppose now that $|C_X|>1$.
We first find a finite collection of curves in $C_X$ that, together with some rings and empty boundary components, will serve as the boundary of~$E_X$.

Given $p\in X$, we use the words left and right to describe arcs incident to~$p$.
That is, if $\alpha$ and $\beta$ are arcs/boundary segments in $C_X$ with endpoints at $p$, then one exits $p$ further to the left than the other (from the viewpoint of $p$), whenever isotopy representatives are chosen to avoid unnecessary intersections.
(If one of these curves has both endpoints at $p$, then it exits $p$ twice, one time to the left of the other, and possibly with other arcs exiting in between.)
We will find the \newword{leftmost curve} in $C_X$ incident to $p$:  a curve in $C_X$ that exits to the left of every other curve in $C_X$ incident to $p$.

If the boundary segment exiting $p$ to the left is in $C_X$, then it is the leftmost curve.
Otherwise, if there is no arc in $C_X$ incident to $p$, then $|C_X|\le 1$, but we have already dealt with that case.
Suppose the boundary segment left from $p$ is not in $C_X$ and suppose $\alpha_0$ is a curve in $C_X$ incident to $p$.
If $\alpha_0$ is not the leftmost curve incident to $p$, then there is an arc $\alpha_1$ that is compatible with $\alpha_0$ and is left of~$\alpha_0$.
To find $\alpha_1$, take an arc $\beta$ left of $\alpha_0$.
If $\beta$ is compatible with $\alpha_0$, then we take~$\alpha_1=\beta$, and otherwise we obtain $\alpha_1$ as a resolution of $\beta$ and $\alpha_0$.
If $\alpha_1$ is not the leftmost curve incident to $p$, then there is an arc $\alpha_2$ that is compatible with $\alpha_0$ and $\alpha_1$ and is left of $\alpha_1$.
To find $\alpha_2$, take an arc $\beta$ left of $\alpha_1$.
If $\beta$ is compatible with $\alpha_0$ and $\alpha_1$, then we can take $\alpha_2=\beta$.
Otherwise, find the first point along $\beta$ that intersects $\alpha_0$ or $\alpha_1$ and let $\alpha_2$ be the resolution of $\beta$ and $\alpha_0$ or $\alpha_1$ at that point.
Then $\alpha_2$ is compatible with $\alpha_0$ and $\alpha_1$ (since $\alpha_0$ and $\alpha_1$ are compatible) and is left of~$\alpha_1$.
We continue finding successive arcs $\alpha_i$, with each $\alpha_i$ left of $\alpha_{i-1}$ and compatible with each of $\alpha_1,\ldots,\alpha_{i-1}$ until we find the leftmost arc incident to $p$.
We eventually find the leftmost arc because Proposition~\ref{tri} is a bound on the size of a pairwise compatible set of arcs, depending only on $\S$ and $\M$.

We now make two observations about the leftmost curve $\alpha$ in $C_X$ incident to~$p$.
First, if $q$ is the other endpoint of $\alpha$ (not ruling out the possibility that $q=p$), then $\alpha$ is the rightmost curve in $C_X$ incident to $q$.
(If there is some curve $\beta$ at $q$ right of $\alpha$, then concatenating $\alpha$ and $\beta$ yields a curve in $C_X$ that is left of $\alpha$ at~$p$.)
Second, if $\alpha$ is an arc, then every other arc in $C_X$ is compatible with $\alpha$.
(If some arc $\beta\in C_X$ is incompatible with $\alpha$, then there is a resolution of $\alpha$ and $\beta$, following $\alpha$ to its first intersection point with $\beta$ and then turning left onto $\beta$ to produce a curve in $C_X$ that is left of $\alpha$ at $p$.)

We now choose isotopy representatives of $E_1$ and $E_2$ and an isotopy representative of the leftmost curve of every point $p\in X$, subject to the following conditions:
no two of the representatives intersect each other, except possibly at endpoints;
each leftmost curve is contained in $E_1$ and (unless it is a boundary segment of $\S$) disjoint from boundary of $E_1$ except at endpoints;
the same condition applies for $E_2$; and if $U_1$ is an arc or ring that forms part of the boundary of $E_1$ and $U_2$ is an arc or ring that forms part of the boundary of $E_2$, then $U_1$ and $U_2$ are chosen to avoid unnecessary intersections (in the same sense described in Definition~\ref{res arc def}).

Let $L_X$ stand for the union, over all $p\in X$ of these representatives of leftmost curves in $C_X$ incident to $p$.
Each such curve is the unique leftmost curve for one of its endpoints and the unique rightmost curve for the other, so $L_X$ is a disjoint union of non-self-intersecting closed curves.

A given closed curve in $L_X$ can fail to separate $\S$ into two pieces, but it has, locally, two sides.
The \emph{inside} of the curve is the side on the \emph{right} when we move from a point $p\in X$ along the leftmost curve at $p$.
The other side is the \emph{outside}.

We finally can construct $E_X$:
It is the subset of $\S$ consisting of all the points of~$\S$ that can be the endpoint of a path starting on a curve in $L_X$, leaving that curve towards the inside, and not crossing any curves in $L_X$ or the boundary of $E_1$ or $E_2$ (but possibly ending on one of these boundaries).
We see that $E_X$ is a subset of the chosen representative of $E_1$ and a subset of the chosen representative of $E_2$.
\end{defcon}

\begin{prop}\label{embed int nc}
If $E_1$ and $E_2$ are embedded blocks in $(\S,\M)$, then any embedded intersection $E$ of $E_1$ and $E_2$ is an embedded block in $(\S,\M)$.
If $\M_E$ is the set of marked points in $E$, then $\curve(E)$ is the subset of $\curve(E_1)\cap\curve(E_2)$ consisting of curves whose endpoints are in $\M_E$.
\end{prop}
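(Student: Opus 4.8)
The plan is to prove Proposition~\ref{embed int nc} by unwinding Definition/Construction~\ref{embed int def}. Write $E=E_X$ for the embedded intersection built from an equivalence class $X\subseteq\M_{E_1}\cap\M_{E_2}$, set $C=\curve(E_1)\cap\curve(E_2)$, and let $C_X$ be the curves of $C$ with both endpoints in $X$. When $|C_X|\le 1$ the claim is immediate from the construction, since then $E$ is a point, a single arc or boundary segment of $(\S,\M)$, or the annulus bounded by such a curve and a ring following it; in each case $\curve(E)=C_X$ by inspection. So assume $|C_X|>1$. The inclusion $\curve(E)\subseteq C_X$ is also immediate: $E$ is constructed as a subset of the chosen representatives of $E_1$ and of $E_2$, so any curve contained in $E$ lies in $\curve(E_1)\cap\curve(E_2)=C$ and has endpoints in $\M_E=X$.

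The core of the argument is the reverse inclusion $C_X\subseteq\curve(E)$. Given $\alpha\in C_X$, I would first arrange, by a minimal-position argument, that $\alpha$ has a representative lying inside the chosen copies of both $E_1$ and $E_2$ and meeting the leftmost curves $L_X$ and the boundary curves $\partial E_1$, $\partial E_2$ without unnecessary intersections. Such an $\alpha$ never crosses $\partial E_1$ or $\partial E_2$. At an endpoint $p\in X$ of $\alpha$, the leftmost curve $\lambda_p$ at $p$ exits to the left of every curve of $C_X$ incident to $p$, so $\alpha$ departs $p$ on the inside of $\lambda_p$; hence $\alpha$ begins in $E$. If $\alpha$ left $E$ it would have to cross some curve of $L_X$; taking the first such crossing, resolving $\alpha$ there with that leftmost curve, and then following the leftmost curve to its endpoint in $X$ would produce, by closure of $C$ under resolution and concatenation (Proposition~\ref{closure}), a curve of $C_X$ exiting that endpoint strictly to the left of the leftmost curve there — a contradiction. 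So $\alpha$ stays in $E$, i.e.\ $\alpha\in\curve(E)$. This also yields connectedness of $E$: each connected component of $E$ meets $L_X$, hence contains a closed curve of $L_X$ and therefore a point of $X$; and any two points of $X$ are joined inside $E$ by a curve of $C_X$.

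It then remains to check that $E$ is an embedded block. It is closed in $\S$, hence compact; it inherits an orientation from $\S$; it is connected by the previous paragraph; and $\M\cap E=X$ is nonempty and finite, with no marked point in the interior of $E$ since that interior lies in the interior of $\S$. Being the closure of the ``inside'' region cut out by the disjoint non-self-intersecting closed curves of $L_X$ together with the arcs and rings of $\partial E_1$ and $\partial E_2$ in minimal position, $E$ is a surface with boundary, so $(E,X)$ is a marked surface. Finally, each boundary component of $E$ is built from pieces of $L_X$ — which are unions of curves of $C_X$, hence arcs and boundary segments of $(\S,\M)$ — and pieces of $\partial E_1$ and $\partial E_2$ — which are rings, empty boundary components, arcs, or boundary segments of $(\S,\M)$; and the annuli and digons adjoined at the end of Definition/Construction~\ref{embed int def} are exactly what is needed so that each boundary component of $E$ is a ring, an empty boundary component, or a finite union of arcs and/or boundary segments of $(\S,\M)$, and no two boundary rings of $E$ are isotopic in $\S$. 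Hence $E$ is an embedded block with $\curve(E)=C_X$, as claimed.

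I expect the main obstacle to be the reverse inclusion $C_X\subseteq\curve(E)$ — in particular the claim that a common curve, put in minimal position against $L_X$, cannot cross the leftmost curves — since this is where the leftmost/rightmost structure, closure under resolution and concatenation, and the bookkeeping for leftmost curves with both ends at a single marked point must all be combined carefully; the verification that $E$ is a genuine surface with boundary and satisfies the boundary conditions of Definition~\ref{embedded def} is comparatively routine, precisely because the construction was set up (via minimal-position choices and the prescribed annulus/digon attachments) to make it so.
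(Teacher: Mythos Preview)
Your outline tracks the paper's proof in structure, and your argument that a curve in $C_X$ cannot cross $L_X$ (via resolution contradicting leftmostness) is the same as the paper's. But there are two genuine problems.

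First, the decisive step is not the one you flag. You write that you would ``arrange, by a minimal-position argument, that $\alpha$ has a representative lying inside the chosen copies of both $E_1$ and $E_2$,'' and then treat crossing $L_X$ as the hard part. In fact, producing a \emph{single} representative of $\alpha$ contained in $E_1\cap E_2$ is the heart of the matter, and you have asserted it rather than proved it. A priori one only has a representative $\alpha_1\subset E_1$ and a (different) representative $\alpha_2\subset E_2$; ``minimal position'' of $\alpha$ against $\partial E_1$ and $\partial E_2$ does not by itself put one representative inside both subsurfaces. The paper spends its effort exactly here: it follows $\alpha_1$ and $\alpha_2$ simultaneously, argues that an obstruction to merging them occurs only at an intersection point of $\partial E_1$ with $\partial E_2$, and shows that such an obstruction would force a disk bounded by pieces of $\partial E_1$ and $\partial E_2$, contradicting the minimal-position choice made in Definition/Construction~\ref{embed int def}. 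Your ``doesn't cross $L_X$'' step is correct but is applied \emph{after} this, separately to each of $\alpha_1$ and $\alpha_2$, not to a single representative you have not yet produced.

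Second, there are no ``annuli and digons adjoined at the end of Definition/Construction~\ref{embed int def}.'' You are conflating the embedded-intersection construction with the curve-union construction of Definition~\ref{curve union}. The embedded intersection $E_X$ is simply the inside region bounded by $L_X$ together with pieces of $\partial E_1$ and $\partial E_2$; nothing further is attached. The verification that the resulting boundary components are rings, empty boundary components, or finite unions of arcs and boundary segments has to come directly from the minimal-position hypotheses and the fact that $E_1,E_2$ are themselves embedded blocks, not from a post-hoc repair step.
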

\begin{proof}
If $E$ is a point or a curve with two distinct endpoints, then $E$ is a degenerate marked surface.
By construction, when $E$ is a point, it is in $\M$, and when $E$ is a curve, it is an arc or boundary segment of $(\S,\M)$, so $E$ is an embedded block in either case.

If $E$ is not a point or curve, it inherits an orientation as a subset of $\S$.
By construction, $E$ has as its boundary a collection of circles, as follows:
First, there are the curves whose union is $L_X$ in Definition/Construction~\ref{embed int def} (with $X=\M_E$).
Next there are any empty boundary components of $(\S,\M)$ that are in the boundary of $E_1$ and in the boundary of $E_2$.
Finally, there are some rings in the boundary of~$E_1$, some rings in the boundary of $E_2$, and/or some rings in $\S$ formed by following a ring in the boundary of $E_1$, switching at the intersection to follow a ring in the boundary of $E_2$, back to $E_1$, and so forth.
Thus in this case also, $E$ is an embedded block.

Since $E$ is a subset of some representative of $E_1$ and a subset of some representative of $E_2$, we see that $\curve(E)\subseteq\curve(E_1)\cap\curve(E_2)$.
Since $\M_E$ is defined as an equivalence class under the equivalence relation of being connected by a curve in $\curve(E)\subseteq\curve(E_1)$, each curve in $\curve(E)$ connects points in $
\M_E$.

On the other hand, suppose $\alpha\in\curve(E_1)\cap\curve(E_2)$ has its endpoints in $\M_E$.
Then there is an isotopy representative of $\alpha$ in the isotopy representative of $E_1$ that was chosen in Definition/Construction~\ref{embed int def}.
Similarly, there is a representative of~$\alpha$ in the representative of $E_2$.
Each representative can be chosen to not cross any of the leftmost curves $\gamma$ that make up $L_X$, because otherwise, a resolution of $\alpha$ and $\gamma$ would be left of $\gamma$.

We want to construct a single isotopy representative of $\alpha$ that is contained in both $E_1$ and $E_2$.
We can do this by following the two representatives in the same direction and making a curve isotopic to both as we go.  
We are unable to continue at a point where one representative leaves $E_1$ and the other leaves $E_2$.
This happens at an intersection of the boundary of $E_1$ with the boundary of $E_2$.
Following the two curves separately until they meet again, the two curves trace out a disk.
But for them to meet again, they must both be in $E_1$ and in $E_2$.
Thus that disk encloses intersections of the boundaries of $E_1$ and $E_2$ that can be removed by isotopy, contradicting the choice of $E_1$ and $E_2$ so that their boundaries avoid unnecessary intersections.
By this contradiction, we see that there is an isotopy representative of $\alpha$ that is contained in $E_1$ and in $E_2$, and thus inside $E$.
Thus $\alpha\in\curve(E)$.
\end{proof}

\begin{prop}\label{meet prop}
Suppose $\P$ and $\Q$ are noncrossing partitions of $(\S,\M)$.
Let~$\R$ be the set of all embedded blocks that arise as an embedded intersection of a block of~$\P$ with a block of $\Q$.
Then $\R$ is a noncrossing partition of $(\S,\M)$ with $\curve(\R)=\curve(\P)\cap\curve(\Q)$.
\end{prop}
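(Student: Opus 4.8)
The plan is to verify that $\R$ satisfies the defining conditions of a noncrossing partition (Definition~\ref{nc def}) and then identify its curve set. By Proposition~\ref{embed int nc}, each element of $\R$ is already an embedded block, so three things remain: that the blocks of $\R$ are pairwise disjoint; that every marked point of $\M$ lies in (exactly) one block of $\R$; and that no two distinct blocks of $\R$ share a ring up to isotopy. Once $\R$ is known to be a noncrossing partition, Proposition~\ref{embed int nc} also tells us that the curve set of each block $E_X$ of $\R$ is exactly the set of curves in $\curve(P_i)\cap\curve(Q_j)$ with both endpoints in $\M_{E_X}$, where $P_i,Q_j$ are the blocks of $\P,\Q$ intersected; taking the union over all such blocks and using that the $\M_{E_X}$ partition $\M$, this should assemble to $\curve(\R)=\curve(\P)\cap\curve(\Q)$, provided every curve of $\curve(\P)\cap\curve(\Q)$ really does arise this way.

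First I would fix isotopy representatives: a representative of each block of $\P$ and each block of $\Q$ chosen simultaneously so that boundaries of $\P$-blocks and $\Q$-blocks avoid unnecessary intersections, and I would observe that Definition/Construction~\ref{embed int def} produces the $E_X$ as honest subsets of these chosen representatives. Disjointness of the blocks of $\R$: two embedded intersections $E_X$ and $E_Y$ with $X\neq Y$ come either from the same pair $(P_i,Q_j)$ or from different pairs. If the pair is the same, $X$ and $Y$ are distinct equivalence classes, and I would argue directly from the construction (the $E_X$ are carved out by the curves $L_X$ and the relevant boundaries) that they cannot meet: any point common to both would give a path in $C=\curve(P_i)\cap\curve(Q_j)$ joining a point of $X$ to a point of $Y$, contradicting that they are distinct classes. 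If the pairs differ, say $P_i\neq P_{i'}$, then $E_X\subseteq P_i$ and $E_Y\subseteq P_{i'}$ are contained in disjoint blocks of $\P$, so they are disjoint. Coverage of $\M$: a marked point $p\in\M$ lies in a unique block $P_i$ and a unique block $Q_j$, hence $p\in\M_{P_i}\cap\M_{Q_j}$, which is nonempty, so $p$ lies in exactly one equivalence class $X$ and hence in exactly one $E_X$; conversely every $E_X$ contains only marked points, so no $E_X$ picks up spurious points.

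For the ring condition, suppose distinct blocks $E_X$ and $E_Y$ of $\R$ had isotopic rings $U,U'$ in their boundaries. By Proposition~\ref{embed int nc}, a boundary ring of $E_X$ is either a ring in the boundary of the ambient $\P$-block, a ring in the boundary of the ambient $\Q$-block, a curve in $L_X$, or a ring spliced from alternating pieces of $\P$- and $\Q$-block boundaries. I would trace through these cases: isotopic rings in the boundaries of $E_X$ and $E_Y$ would force the corresponding rings in the boundaries of blocks of $\P$ (respectively $\Q$) to be isotopic, and since $\P$ and $\Q$ are themselves noncrossing partitions, this forces the two ambient blocks to coincide; then $E_X$ and $E_Y$ come from the same pair $(P_i,Q_j)$, and one checks that the construction of the $L_X$'s (leftmost curves, inside/outside) cannot produce two distinct classes with a shared boundary ring — a shared ring would let one pass from inside $E_X$ to inside $E_Y$, again contradicting $X\neq Y$. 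Finally, for the curve set identity: the inclusion $\curve(\R)\subseteq\curve(\P)\cap\curve(\Q)$ is immediate since each $E_X$ sits inside both a $\P$-block and a $\Q$-block. For the reverse, given $\alpha\in\curve(\P)\cap\curve(\Q)$ with endpoints $p,q$, the point $p$ lies in some block $P_i$ of $\P$ and some block $Q_j$ of $\Q$, and since $\alpha\in\curve(P_i)\cap\curve(Q_j)=C$, its endpoints $p,q$ are equivalent in the sense of Definition/Construction~\ref{embed int def}, so they lie in a common class $X$; then Proposition~\ref{embed int nc} gives $\alpha\in\curve(E_X)\subseteq\curve(\R)$.

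I expect the main obstacle to be the ring condition (no two distinct blocks of $\R$ sharing a ring), since it requires carefully combining three facts — that $\P$ has no repeated rings, that $\Q$ has no repeated rings, and that the spliced rings produced in the embedded-intersection construction interact correctly with the inside/outside bookkeeping of the $L_X$'s — rather than following formally from the earlier propositions. The disjointness and coverage claims, by contrast, should be short once the chosen representatives and the equivalence-class structure are in place, and the curve-set identity is essentially bookkeeping on top of Proposition~\ref{embed int nc}.
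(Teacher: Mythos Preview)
Your overall strategy matches the paper's: verify disjointness, coverage, and the ring condition, then read off the curve set from Proposition~\ref{embed int nc}. Two places deserve sharpening.

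For disjointness in the same-pair case, your claim that ``any point common to $E_X$ and $E_Y$ would give a path in $C$ joining $X$ to $Y$'' is not obvious: a shared interior point does not by itself produce an arc or boundary segment in $C$. The paper supplies the missing mechanism explicitly. It observes that the leftmost curves making up the various $L_X$ (across \emph{all} classes) are pairwise compatible, because if two such curves were incompatible, their resolution would lie in $\curve(\P)\cap\curve(\Q)$ (by Proposition~\ref{closure}) and would connect marked points in distinct equivalence classes, a contradiction. Once all leftmost curves are compatible, one can choose their representatives disjoint simultaneously, and the resulting $E_X$ are then disjoint by construction.

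For the ring condition, your step ``isotopic rings in the boundaries of $E_X$ and $E_Y$ would force the corresponding rings in the boundaries of blocks of $\P$ (respectively $\Q$) to be isotopic'' is not justified when the boundary ring of $E_X$ is one of the spliced rings you yourself list; such a ring need not correspond to any boundary ring of a $\P$- or $\Q$-block. The paper's argument sidesteps the case analysis entirely. It notes only that the ring $U$ bounding $E_X$ lies \emph{inside} its ambient $\P$-block $E_1$ and $\Q$-block $E_2$ (as a subset, not necessarily as a boundary component), and likewise the isotopic ring $U'$ bounding $E_Y$ lies inside $E_1'$ and $E_2'$. If $E_1\neq E_1'$, then each must have a boundary ring in the annulus between $U$ and $U'$, and those two boundary rings are isotopic, contradicting that $\P$ is a noncrossing partition. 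Hence $E_1=E_1'$, and since no boundary component of $E_1$ is a trivial closed curve, $E_1$ contains the entire annulus between $U$ and $U'$; similarly $E_2=E_2'$ contains it. But then $E_X$ and $E_Y$ cannot be separated by that annulus, the desired contradiction. This is the step you correctly flagged as the main obstacle, and the annulus argument is the clean way through it.
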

\begin{proof}
The key to the proof is to show that $\R$ is a noncrossing partition.
If so, since $\curve(\R)$ is the union of the curve sets of its blocks, Proposition~\ref{embed int nc} says that $\curve(\R)$ is the union, over all blocks $E_1$ of $\P$ and $E_2$ of $\Q$, of $\curve(E_1)\cap\curve(E_2)$.
This union is $\curve(\P)\cap\curve(\Q)$.

We now show that $\R$ is a noncrossing partition.
For each pair $(E_1,E_2)$ consisting of a block $E_1$ of $\P$ and a block $E_2$ of $\Q$, we constructed an equivalence relation on $\M_{E_1}\cap\M_{E_2}$.
The sets $\M_{E_1}\cap\M_{E_2}$, as $(E_1,E_2)$ varies, constitute a set partition of $\M$, so the set of all equivalence classes arising for all pairs $(E_1,E_2)$ defines an equivalence relation on $\M$.
Proposition~\ref{embed int nc} implies that every curve in $\curve(\P)\cap\curve(\Q)$ connects marked points in the same equivalence class $X$.
The leftmost curves (constructed in Definition/Construction~\ref{embed int def}), as $X$ varies, are all compatible with each other because if two were incompatible, they would admit as a resolution a curve connecting distinct equivalence classes.
Thus we can choose representatives of the various closed curves $L_X$ to be disjoint from each other.

We then embed any other components of the boundaries of all blocks of $\P$ and~$\Q$ so as to avoid unnecessary intersections between curves in the boundary of $\P$ and curves in the boundary of $\Q$.
Constructing all the embedded intersections using these fixed embeddings of blocks of $\P$ and blocks of $\Q$, we obtain pairwise disjoint embeddings of the embedded intersections.

We have almost shown that $\R$ is a noncrossing partition.
It remains to show that no two distinct blocks of $\R$ have the same ring in their boundary.
Suppose to the contrary that for blocks $E_1,E_1'$ of $\P$ and blocks $E_2,E_2'$ of $\Q$ there exist an embedded intersection $E_3$ of $E_1$ and $E_2$ and an embedded intersection $E_3'$ of $E_1'$ and~$E_2'$ that are distinct but have a boundary ring in common.
Let $U$ and $U'$ be the two embeddings of that ring that bound $E_3$ and $E_3'$.
Since $E_3$ is in $E_1$ and~$E_2$, also~$U$ is in $E_1$ and $E_2$, and since $E'_3$ is in $E'_1$ and $E'_2$, also~$U'$ is in $E'_1$ and $E'_2$.
Since~$\P$ is a noncrossing partition, $E_1$ and $E_1'$ cannot both have boundary rings isotopic to~$U$ and $U'$, so $E_1=E_1'$, and this block goes through the annulus defined by $U$ and~$U'$.
But since the boundary of $E_1$ cannot have a component that is a trivial closed curve in $\S$, we conclude that $E_1$ contains the entire annulus defined by $U$ and $U'$.
Similarly, $E_2$ and $E'_2$ coincide and contain the entire annulus, contradicting that supposition that $E_3$ and $E_3'$ are separated by the annulus.
We conclude that $\R$ is a noncrossing partition.
\end{proof}

\begin{proof}[Proof of Theorem~\ref{lattice}]  
If $\P$ and $\Q$ are noncrossing partitions of $(\S,\M)$, then Propositions~\ref{curve set le} and~\ref{meet prop} together imply that the meet $\P\meet\Q$ exists and equals the noncrossing partition whose embedded blocks are all of the embedded intersections of blocks of $\P$ and blocks of $\Q$.
We have shown that $\NC_{(\S,\M)}$ is a meet semilattice.
Theorem~\ref{graded} implies in particular that $\NC_{(\S,\M)}$ has finite height, so we see that $\NC_{(\S,\M)}$ is a \emph{complete} meet semilattice.
Since $\NC_{(\S,\M)}$ has an upper bound, $\NC_{(\S,\M)}$ is a complete lattice by the standard argument (showing that the join is the meet of all upper bounds).
\end{proof}

\begin{remark}\label{not int closed}
We do not have a proof for the converse of Proposition~\ref{closure} (the assertion that every set of arcs/boundary curves that is closed under concatenation is $\curve(\P)$ for some noncrossing partition $\P$).
It is possible that one could construct a noncrossing partition for a given closed set of curves using some version of Definition/Construction~\ref{embed int def}.
We already have a characterization of the meet in $\NC_{(\S,\M)}$ in terms of curve sets:
$\curve(\P\meet\Q)=\curve(\P)\cap\curve(\Q)$.
If the converse of Proposition~\ref{closure} were known, then we could describe the join in terms of curve sets as well:
$\curve(\P\join\Q)$ would be the unique smallest closed set containing $\curve(\P)\cup\curve(\Q)$ (the intersection of all closed sets containing the union).
\end{remark}

We close this section by mentioning a corollary to Theorem~\ref{lattice}.
The curve set of a noncrossing partition $\P$ can be thought of as a collection of noncrossing partitions:
Each curve in $\curve(\P)$ corresponds to the noncrossing partition one of whose blocks is that curve or an annulus bounded by that curve and a ring, and whose other blocks are single points.
We abuse notation in the following theorem by thinking of $\curve(\P)$ as the set of all such noncrossing partitions.
The theorem is immediate in light of Proposition~\ref{curve set le} and Theorem~\ref{lattice}.

\begin{corollary}\label{join curve}
Suppose $\P\in\NC_{(\S,\M)}$.
Then $\P=\Join\curve(\P)$.
\end{corollary}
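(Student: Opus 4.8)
The plan is to verify the two halves of a least-upper-bound characterization directly. Thinking of $\curve(\P)$ as a set of noncrossing partitions in the abused sense described just before the statement: for each curve $\alpha\in\curve(\P)$, write $\P_\alpha$ for the noncrossing partition whose nontrivial block is $\alpha$ itself (if $\alpha$ is an arc or boundary segment) or an annulus bounded by $\alpha$ and a nearby ring, together with singleton blocks at all remaining marked points. First I would show that $\P$ is an upper bound for $\set{\P_\alpha:\alpha\in\curve(\P)}$: this is immediate from Proposition~\ref{curve set le}, since $\curve(\P_\alpha)\subseteq\curve(\P)$ by construction (the curve set of $\P_\alpha$ is essentially $\set{\alpha}$ together with boundary segments forced to lie in its blocks, all of which lie in $\curve(\P)$).

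Next I would show $\P$ is the \emph{least} upper bound. Suppose $\R\in\NC_{(\S,\M)}$ satisfies $\P_\alpha\le\R$ for every $\alpha\in\curve(\P)$. By Proposition~\ref{curve set le} this means $\alpha\in\curve(\P_\alpha)\subseteq\curve(\R)$ for every $\alpha\in\curve(\P)$; hence $\curve(\P)\subseteq\curve(\R)$, and Proposition~\ref{curve set le} again gives $\P\le\R$. Since $\P$ is itself an upper bound, it is the minimum of the set of upper bounds, i.e.\ $\P=\Join\curve(\P)$ in the lattice $\NC_{(\S,\M)}$ (which is a complete lattice by Theorem~\ref{lattice}, so the join exists).

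The only point requiring a small amount of care is the claim that $\curve(\P_\alpha)$ contains $\alpha$ and is contained in $\curve(\P)$. That $\alpha\in\curve(\P_\alpha)$ is clear when $\alpha$ is an arc or boundary segment (it is the block, or it lies in the annular block as the curve the annulus follows); one should note the annulus case contributes $\alpha$ to the curve set as an arc isotopic to the ring-bounding curve, which is exactly how the ``curve'' associated to a ring-type block enters $\curve(\cdot)$. For the reverse containment, the curve set of $\P_\alpha$ consists of $\alpha$ (or curves isotopic to it inside the annulus) plus any boundary segments of $(\S,\M)$ that happen to lie in the singleton-plus-$\alpha$ block structure; all of these are among the boundary segments and arcs contained in blocks of $\P$, since $\alpha\in\curve(\P)$ means $\alpha$ lies in some block $E$ of $\P$, and the relevant boundary segments lie in $E$ as well. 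I expect no genuine obstacle here—the corollary is essentially a formal consequence of Proposition~\ref{curve set le} and the completeness from Theorem~\ref{lattice}, exactly as the paper asserts—so the proof can be stated in a few lines.
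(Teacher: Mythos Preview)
Your proposal is correct and follows exactly the approach the paper indicates: the corollary is immediate from Proposition~\ref{curve set le} (which reduces the order relation to containment of curve sets) together with Theorem~\ref{lattice} (which guarantees the join exists), and you have simply spelled out the two-line verification that $\P$ is the least upper bound of $\set{\P_\alpha:\alpha\in\curve(\P)}$. The extra care in your last paragraph about $\curve(\P_\alpha)$ is more than the paper bothers with, but it is accurate (in each case $\curve(\P_\alpha)=\set{\alpha}$, so the containments are trivial).
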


\section{Symmetric noncrossing partitions with double points}\label{doub sec}  
In this section, we define symmetric noncrossing partitions of a surface with marked points on its boundary and double (marked) points on its interior and prove the properties summarized in Section~\ref{intro sec}.
The noncrossing partitions are symmetric with respect to a fixed involutive symmetry.
The existence of double points and the requirement of symmetry work together to form a model that is distinct from the noncrossing partitions of a marked surface, but that uses some of the same definitions.
Other definitions must be modified in the presence of symmetry/double points.
This section is written to parallel Section~\ref{marked sec}, for easy comparison.

\begin{definition}[\emph{Symmetric marked surface with double points}]\label{sym doub def}
We start with a surface $\S$ and write $\partial\S$ for the boundary of $\S$ and $\int(\S)$ for the interior.
Informally, we ``double'' some finite set $\D\subset\int(\S)$:
For each point in $\D$, there are ``two copies'' of that point, distinguished with a ``$+$'' and a ``$-$''.
More formally, we consider the topological space~$\S^\pm$ obtained from two disjoint copies $\S^+$ and $\S^-$ of~$\S$ by identifying the two copies at every point except at each point in $\D$.
If $\D\neq\emptyset$, then $\S^\pm$ is not a surface (or even a Hausdorff space), but we will use standard surface terminology, which is understood to describe the underlying surface $\S$.

A \newword{symmetric marked surface with double points} $(\S^\pm,\B,\D^\pm,\phi)$, or $\S^\pm$ for short, is 
\begin{itemize}
\item a nondegenerate marked surface $(\S,\B)$ in the sense of Definition~\ref{nondegen def} (but with $\B$ allowed to be empty),
\item a finite set $\D$ of \newword{double points} in the interior of $\S$ that is used to define the space $\S^\pm$ containing two disjoint copies of $\D$ whose union is denoted~$\D^\pm$, and
\item an involutive orientation-preserving homeomorphism $\phi$ of $\S^\pm$ that permutes~$\B$ and sends each double point in $\D^\pm$ to the other double point \emph{at the same location}.
We also use the symbol $\phi$ for the involutive homeomorphism induced on $\S$.
We require that this induced map $\phi$ on $\S$ is a \emph{nontrivial} homeomorphism of $\S$.
This map thus fixes $\D$ pointwise and acts as a half-turn rotation near each $d\in\D$.
The map $\phi$ may also have additional isolated fixed points, and it acts as a half-turn rotation near each of these fixed points as well.
\end{itemize}
The points $\B$ are called \newword{marked boundary points} and the points $\D^\pm$ are called \newword{double points}.
The set $\B\cup\D^\pm$ is denoted $\M$ and is called the set of \newword{marked points}.
We require that $\M$ is not empty.
\end{definition}

\begin{remark}\label{why phi}
Double points are less obviously inspired by \cite{cats1,cats2}, which instead has punctures (non-doubled marked points in the interior of $\S$).
As discussed in Remark~\ref{clus rem}, the combination of double points and symmetry plays a role that one might have expected to be played by punctures.
\end{remark}

\begin{definition}[\emph{Symmetric ambient isotopy}]\label{sym iso def}
Two subsets of $\S^\pm$ are related by \newword{symmetric ambient isotopy} if they are related by a homeomorphism from $\S^\pm$ to itself, fixing the boundary $\partial\S$ pointwise, fixing $\D^\pm$ pointwise, commuting with~$\phi$, and homotopic to the identity by a homotopy that fixes $\partial\S$ and $\D^\pm$ pointwise at every step and commutes with $\phi$ at every step.
A symmetric ambient isotopy necessarily fixes every fixed point of $\phi$, so in particular it cannot relate subsets of~$\S^\pm$ by passing them through a fixed point of $\phi$.
We often refer to symmetric ambient isotopy simply as ``symmetric isotopy''.
\end{definition}

\begin{definition}[\emph{Arc in a symmetric marked surface with double points}]\label{sym arc def}
We modify Definition~\ref{arc def} to account for the presence of double points.
An \newword{arc} in $\S^\pm$ is a non-oriented curve $\alpha$ in $\S^\pm$, having endpoints in $\M$ and satisfying certain requirements.
Some of the requirements are familiar from Definition~\ref{arc def}:
\begin{itemize}
\item
$\alpha$ does not intersect itself except possibly at its endpoints.
\item
Except for its endpoints, $\alpha$ does not intersect $\M$ or the boundary of $\S$.
\item
$\alpha$ does not bound a monogon in $\S$. 
This exclusion applies even if the endpoints of $\alpha$ are two different double points at the same location in $\S$.
\item 
$\alpha$ does not combine with a boundary segment to bound a digon in $\S$. 
\end{itemize}
There are also two new requirements related to symmetry:
\begin{itemize}
\item
Either $\alpha=\phi(\alpha)$ or $\alpha$ and $\phi(\alpha)$ don't intersect, except possibly at endpoints.
\item
$\alpha$ and $\phi(\alpha)$ do not combine to form a digon in $\S^\pm$ unless that digon contains a point in $\D$.
The curves $\alpha$ and $\phi(\alpha)$ \emph{are} allowed to form a digon in $\S$ (as opposed to $\S^\pm$) if, at each vertex of the digon, the two edges at that vertex attach to different copies of the same double point.
Figure~\ref{arc digon} illustrates this distinction in a symmetric annulus with one double point and one non-double fixed point.
In the left picture, the curves $\alpha$ and $\phi(\alpha)$ both attach to the double point marked $+$ at one vertex of the digon and both attach to the double point marked $-$ at the other vertex, so $\alpha$ and $\phi(\alpha)$ are \emph{not} arcs.
By contrast, in the right picture, $\alpha$ and $\phi(\alpha)$ \emph{are} arcs.
\begin{figure}
\begin{tabular}{cc}
\scalebox{0.5}{\includegraphics{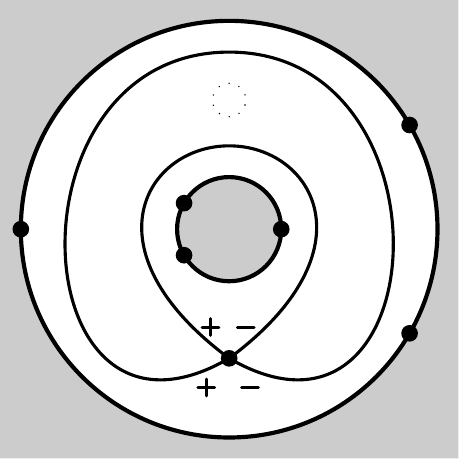}}&
\scalebox{0.5}{\includegraphics{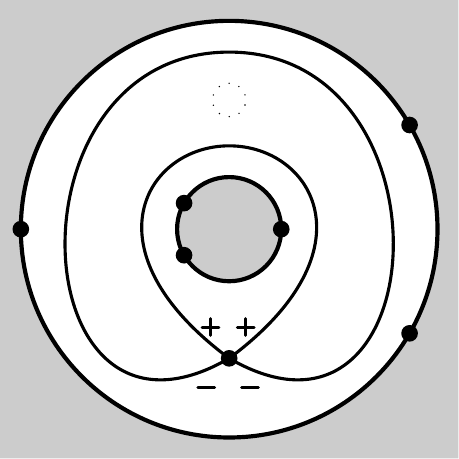}}
\end{tabular}
\caption{Ways that an arc pair may not (left) or may (right) form an empty digon}
\label{arc digon}
\end{figure}
\end{itemize}

When $\alpha=\phi(\alpha)$, we call $\alpha$ a \newword{symmetric arc} and otherwise $\alpha$ and $\phi(\alpha)$ form a \newword{symmetric pair of arcs}, or \newword{arc pair} for short.
A symmetric arc necessarily contains, in its relative interior, a point that is fixed by $\phi$.
Since an arc may not pass through marked points (including double points), symmetric arcs cannot exist in $(\S^\pm,\B,\D^\pm,\phi)$ unless the map $\phi$ fixes at least one point in $\S\setminus\D$.

Symmetric arcs and symmetric pairs of arcs are considered up to 
symmetric isotopy and up to swapping an arc $\alpha$ with $\phi(\alpha)$.
\end{definition}

\begin{remark}\label{monogon/digon with double}
In Definition~\ref{sym arc def}, it might seem reasonable to allow $\alpha$ to bound a monogon (or combine with a boundary segment to bound a digon), as long as that monogon (or digon) contains a point in $D$.
However, the symmetry requirement (either $\alpha=\phi(\alpha)$ or $\alpha$ and $\phi(\alpha)$ don't cross) rules out such possibilities.
\end{remark}

\begin{remark}\label{no digon remark}
Definition~\ref{sym arc def} requires that an arc pair $\alpha,\phi(\alpha)$ does not combine to form a digon in $\S^\pm$ because, if so, there is a symmetric arc that should replace them:
Necessarily, that digon contains exactly one non-double fixed point of $\phi$, and there is an arc $\alpha'$ in that digon with $\alpha'=\phi(\alpha')$, having the same endpoints as $\alpha$ and as $\phi(\alpha)$.
On the other hand, if $\alpha$ and $\phi(\alpha)$ combine to form a digon in $\S$ but not in $\S^\pm$, as in the left picture of Figure~\ref{arc digon}, there is no such symmetric arc.
\end{remark}

\begin{definition}[\emph{Ring in a symmetric marked surface with double points}]\label{sym ring def}
A \newword{ring} in $\S^\pm$ is a nontrivial closed curve in $\S$ that has no self-intersections, is disjoint from the boundary of $\S$ and from $\D$, does not, together with an empty boundary component of $\S$, bound an annulus in $\S\setminus\D$, and does not bound a disk in $\S$ only containing an element of $\D$ (i.e.\ one pair of opposite double points in $\D^\pm$).
Furthermore, a ring $U$ must either coincide with $\phi(U)$ or be disjoint from $\phi(U)$, and~$U$ and $\phi(U)$ may not together define an annulus in $\S$ unless that annulus contains a point in $\D$.
The ring $U$ is a \newword{symmetric ring} if $U=\phi(U)$, and otherwise $U,\phi(U)$ is a \newword{symmetric pair of rings}.
Rings are considered up to 
symmetric isotopy.
\end{definition}

\begin{remark}\label{no annulus remark}
Similarly to the definition of arcs (see Remark~\ref{no digon remark}), the definition of rings requires that $U$ and $\phi(U)$ do not combine to bound an empty annulus because such a pair should be replaced by a symmetric ring:
The annulus would contain exactly two non-double fixed points of $\phi$, and there is a ring $U'$ in that annulus, passing through both fixed points, with $U'=\phi(U')$.
\end{remark}

\begin{prop}\label{no ring doub}
No ring in $\S^\pm$ bounds a disk in $\S$ containing only double points.
\end{prop}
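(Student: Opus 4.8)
The plan is to argue by contradiction, using the symmetry $\phi$. Suppose $U$ is a ring in $\S^\pm$ that bounds a disk $D$ in $\S$ all of whose special points (points of $\D$, non-double fixed points of $\phi$, or points of $\partial\S$) are double points. The last clause of Definition~\ref{sym ring def} already forbids $D$ from containing exactly one point of $\D$, so $D$ contains two distinct double points $d,d'$; since rings are disjoint from $\D$, both lie in $\int D$, and both are fixed by $\phi$. By Definition~\ref{sym ring def} we are in one of the cases $U=\phi(U)$ or $U\cap\phi(U)=\emptyset$, and I would rule out each.

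Suppose first $U=\phi(U)$. Then $\phi(D)$ is a closed disk with boundary $\phi(U)=U$. Since $U$ bounds $D$ it separates $\S$ into exactly two pieces, so there are at most two disks with boundary $U$; hence $\phi(D)$ is $D$ or $\overline{\S\setminus D}$, and the second is impossible because $\phi$ fixes $d\in\int D$, which is not in $\overline{\S\setminus D}$. Thus $\phi$ restricts to an orientation-preserving involutive homeomorphism of $D$, and it is not the identity since $\phi$ has isolated fixed points. By the classical theorem of Ker\'ekj\'art\'o such a map is topologically conjugate to a rotation, hence has a single fixed point --- contradicting the two distinct fixed points $d,d'$ in $D$.

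Suppose instead $U\cap\phi(U)=\emptyset$. Then $\phi(D)$ is a closed disk whose boundary $\phi(U)$ is disjoint from $\partial D=U$, and $\phi(D)$ still contains $d$ and $d'$ in its interior, so $D\cap\phi(D)\neq\emptyset$. Two embedded closed disks in a surface whose boundaries are disjoint and whose intersection is non-empty must be nested or have union the whole surface; if they were nested, applying $\phi$ would give $D=\phi(D)$, hence $U=\phi(U)$, a contradiction. So $D\cup\phi(D)=\S$ with $D\cap\phi(D)$ an annulus, and an Euler-characteristic count ($\chi(\S)=\chi(D)+\chi(\phi(D))-\chi(D\cap\phi(D))=2$) forces $\S$ to be a sphere. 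On the sphere, $U$ and $\phi(U)$ cut $\S$ into two disk caps, interchanged by $\phi$, and an annular band fixed by $\phi$; as $\phi$ has no fixed point on the caps or on $U\cup\phi(U)$, every point of $\D$ lies in the band. Since $D$ is a disk bounded by $U$, it is either the closed cap at $U$ --- impossible, since that cap misses $\D$ while $D$ contains $d,d'$ --- or the closure of the complement of that cap, in which case $\overline{\S\setminus D}$ is a disk bounded by $U$ that misses $\D$; then $U$ is trivial and not a ring. Either branch yields a contradiction.

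I expect the real work to be in the second case: tracking carefully how two embedded disks with disjoint boundaries can lie in a surface, pinning down the sphere via the Euler-characteristic bound, and then observing that it is exactly the nontriviality of $U$ that rules out the last surviving configuration. The case $U=\phi(U)$ is short once one cites the standard classification of periodic homeomorphisms of the disk; alternatively one could use the half-turn description of $\phi$ near its fixed points from Definition~\ref{sym doub def} in place of Ker\'ekj\'art\'o.
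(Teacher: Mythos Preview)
Your argument is correct and follows the same two-case split as the paper's proof (which simply asserts that such a disk ``can contain at most one fixed point under $\phi$'' and omits the easy details). A couple of simplifications are available. In Case~2 you never need to reach the sphere: by Definitions~\ref{nondegen def} and~\ref{sym doub def} the surface $\S$ has nonempty boundary, so the Euler-characteristic computation $\chi(\S)=2$ is already a contradiction, and in fact the nested/union dichotomy collapses to ``nested'' (a null-homotopic embedded circle in a surface with boundary bounds a \emph{unique} disk), which forces $D=\phi(D)$ immediately. In Case~1, invoking Ker\'ekj\'art\'o is fine but heavier than needed; the observation in Definition~\ref{sym doub def} that $\phi$ acts as a half-turn near each fixed point already shows the fixed points are isolated, and for an orientation-preserving involution of a disk a Lefschetz/Brouwer count gives a single fixed point.
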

\begin{proof}
The definition of a ring rules out rings that bound disks in $\S$ containing~$0$ or~$1$ points in $\D$.
But it is easy to see that a closed curve $U$ that bounds a disk in~$\S$ and has either $\phi(U)=U$ or $\phi(U)$ disjoint from $U$ can contain at most one fixed point under $\phi$.
We omit the easy details.
\end{proof}

\begin{definition}[\emph{Embedded block in a symmetric marked surface with double points}]\label{sym embedded def}
An \newword{embedded block} in $\S^\pm$ is a closed subset $E$ of $\S^\pm$ such that $E$ has nonempty intersection with $\M=\B\cup\D^\pm$, with either $E\cap\phi(E)=\emptyset$ or $E=\phi(E)$, satisfying the following conditions.
\begin{itemize}
\item
If $E\cap\phi(E)=\emptyset$, then $\set{E,\phi(E)}$ is called a \newword{symmetric pair of blocks}. 
We require that
\begin{itemize}
\item $(E,E\cap\M)$ is a (degenerate or nondegenerate) marked surface.
\item
If $(E,E\cap\M)$ is degenerate, it is either a point in $\M=\B\cup\D^\pm$ or an arc or boundary segment of $\S^\pm$ (necessarily with two distinct endpoints).
\item If $(E,E\cap\M)$ is nondegenerate, then each boundary component of $(E,E\cap\M)$ is a ring in $\S^\pm$, an empty boundary component of $\S^\pm$, or a finite union of arcs and/or boundary segments of $\S^\pm$.
\end{itemize}
Note that since $E\cap\phi(E)=\emptyset$ and $\phi$ sends each double point to its opposite double point, $E\cap\M$ contains at most one double point at each location.
\item
If $E=\phi(E)$, then $E$ is called a \newword{symmetric block}. 
We require that
\begin{itemize}
\item $(E,E\cap\M)$ is a degenerate marked surface or, writing $\phi|_E$ for the restriction of $\phi$ to $E$, $(E,(\partial E)\cap\M,\int(E)\cap\D^\pm,\phi|_E)$ is a symmetric marked surface with double points.
\item If $(E,E\cap\M)$ is a degenerate marked surface, then it is a symmetric arc of $\S^\pm$ and is called a \newword{degenerate symmetric block} (possibly with its two endpoints at double points in the same location).
\item
If $(E,(\partial E)\cap\M,\int(E)\cap\D^\pm,\phi|_E)$ is a symmetric marked surface with double points, then each of its boundary components is a ring in $\S^\pm$, an empty boundary component of $\S^\pm$, or a finite union of arcs and/or boundary segments of $\S^\pm$.
We \emph{allow} the boundary of $E$ to contain a double point $d$ and its opposite double point, at different points on the boundary (in the same boundary component or not).
See Remark~\ref{double topology}.
The bottom-left picture in Figure~\ref{sym nc ex fig} show an example.
\end{itemize}
\item
The rings of $(\S^\pm,\B,\D^\pm,\phi)$ occurring as boundary components of $E$ are distinct up to 
symmetric isotopy 
in $(\S^\pm,\B,\D^\pm,\phi)$.
See Remark~\ref{when are rings same}.
\end{itemize}
Embedded blocks are considered up to 
symmetric isotopy.
\end{definition}

\begin{remark}\label{no sym boundary}
A symmetric arc or symmetric ring contains a fixed point of $\phi$ and, locally about that fixed point, $\phi$ is a half-turn rotation.  
For that reason, no symmetric arc or ring can form part of the boundary of $E$, except in the case where $E$ itself is a symmetric arc.
\end{remark}

\begin{remark}\label{double topology}
In the case where $(E,(\partial E)\cap\M,\int(E)\cap\D^\pm,\phi|_E)$ is a symmetric marked surface with double points and $\partial E$ contains a double point $d$ and its opposite, then since every open set in $S^\pm$ containing $d$ also contains the opposite double point, the subspace topology on $E$ is not a surface with double points.  
However, the subspace topology on the interior of $E$ is correct.
\end{remark}

\begin{definition}[\emph{Noncrossing partition of a symmetric surface with double points}]\label{sym nc def}
A \newword{(symmetric) noncrossing partition} of $(\S^\pm,\B,\D^\pm,\phi)$ is a collection $\P$ of \emph{disjoint} embedded blocks, such that the action of $\phi$ permutes the blocks of $\P$, every point in $\M$ is contained in some block of $\P$, and no two distinct blocks have the same ring 
(up to symmetric isotopy) in their boundary.
Noncrossing partitions are considered up to symmetric isotopy.
A specific symmetric-isotopy representative of a noncrossing partition~$\P$ is called an \newword{embedding} of $\P$.
\end{definition}

Recall that Figure~\ref{sym nc ex fig} shows examples of noncrossing partitions of a symmetric surface with double points.

\begin{remark}\label{when are rings same}
Definitions~\ref{sym embedded def} and~\ref{sym nc def} together require that no ring of $\S^\pm$ occurs as the boundary component of two embedded blocks or occurs twice as boundary components of the same block in $\P$. 
A symmetric isotopy 
between rings cannot pass the rings through a fixed point of $\phi$, so two rings are \emph{not} the same up to 
symmetric isotopy 
unless together they bound an annulus in $\S$ containing no fixed points of~$\phi$.
Thus if the two rings together bound an annulus containing one or more double points, then they can both be boundary components of blocks of $\P$ (or of the same block of $\P$).
However, if two rings together bound an annulus in $\S$ containing only non-double fixed points, then they cannot both be boundary components of blocks, because they are excluded from the definition of rings (Definition~\ref{sym ring def}).
\end{remark}

\begin{remark}\label{same location}
Two embedded blocks can be disjoint (and thus can participate together in a noncrossing partition) even when each contains a double point at the same location in~$\S$.
The two blocks necessarily form a symmetric pair.
Examples are found in the top-left and bottom-right pictures in Figure~\ref{sym nc ex fig}.
\end{remark}

\begin{definition}[\emph{Noncrossing partition poset, symmetric-with-double-points case}]\label{sym nc le def}
We set $\P\le\Q$ in $(\S^\pm,\B,\D^\pm,\phi)$ if and only if there exist embeddings of $\P$ and~$\Q$ such that every block of $\P$ is contained in some block of~$\Q$.
The \newword{noncrossing partition poset} is the set $\NC_{(\S^\pm,\B,\D^\pm,\phi)}$ of noncrossing partitions with this partial order.
The symbol $\covered$ denotes cover relations in $\NC_{(\S^\pm,\B,\D^\pm,\phi)}$.
\end{definition}

\begin{example}\label{non lat}
The noncrossing partition poset $\NC_{(\S^\pm,\B,\D^\pm,\phi)}$ can fail to be a lattice.
For example, consider the case where $\S$ is an annulus, $|\D|=2$ and $\B=\emptyset$, with an involutive symmetry $\phi$ that fixes each point in $\D$ and exchanges the two components of the boundary of $\S$.
The noncrossing partition poset $\NC_{(\S^\pm,\B,\D^\pm,\phi)}$ for this case is shown in Figure~\ref{non lat fig}.
\end{example}
\begin{figure}
\scalebox{0.9}{\includegraphics{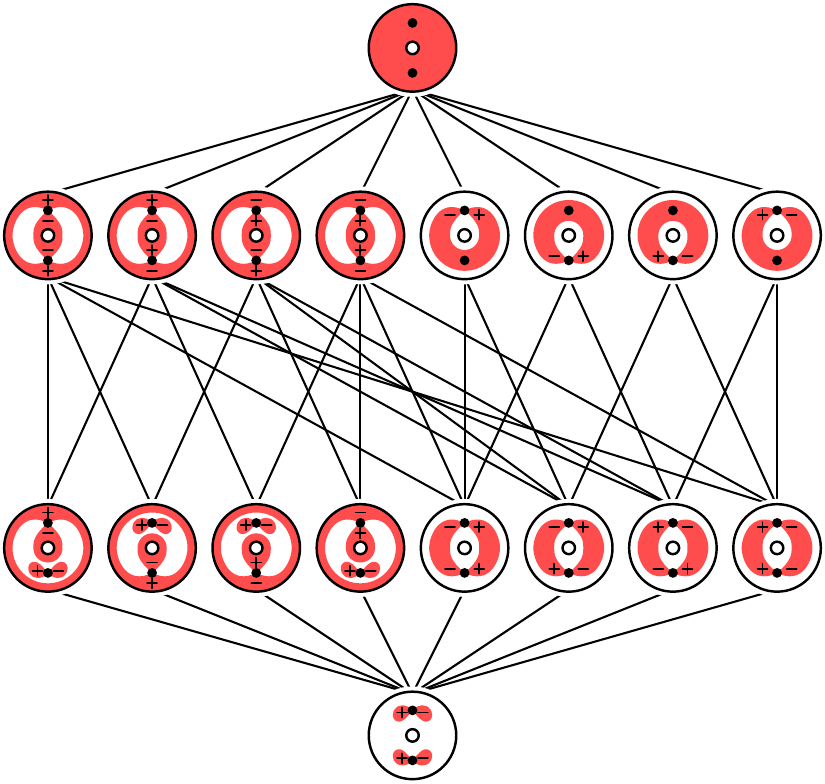}}
\caption{An example where $\NC_{(\S^\pm,\B,\D^\pm,\phi)}$ is not a lattice}
\label{non lat fig}
\end{figure}


We now characterize the rank function of $\NC_{(\S^\pm,\B,\D^\pm,\phi)}$ using the homology of~$\P$, more specifically a different version of Betti numbers.
For $i=0,1$, let $b^\phi_i$ be the dimension of the kernel of the linear map on $H_i(\P)$ sending an $i$-homology class $C$ to $C+\phi(C)$.
Then $b^\phi_0$ counts symmetric pairs $E,\phi(E)$ of distinct embedded blocks of $\P$, because each block is a $0$-homology class and the kernel is spanned by elements of the form $E-\phi(E)$.
Suppose $C$ is an isotopy class of circles in $\P$ that do not bound a disk in $\P$.
If $\phi(C)$ is equal to $C$, then $C$ is not in the kernel.
If $\phi(C)$ is the opposite of $C$ (i.e.\ $C$ with the opposite orientation), then $C$ is in the kernel.
If $\phi(C)$ is isotopic neither to $C$ nor to the opposite of $C$, then $C-\phi(C)$ is in the kernel.
Thus, informally, $b_1^\phi$ counts non-bounding circles $C$ in $\P$ such that $\phi(C)=-C$ and pairs of non-bounding circles $C,\phi(C)$ that are neither identical to nor opposite each other.

Recall that $\M=\B\cup\D^\pm$, so that $|\M|=|\B|+|\D^\pm|=|\B|+2|\D|$.
We will prove the following expression for the rank function, which is similar to Theorem~\ref{graded}.

\begin{theorem}\label{sym graded}
$\NC_{(\S^\pm,\B,\D^\pm,\phi)}$ is a graded poset, with rank function given by 
\[\rank(\P)=\frac12|\M|+b_1^\phi(\P)-b_0^\phi(\P).\]  
\end{theorem}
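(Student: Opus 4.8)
The plan is to mirror the proof of Theorem~\ref{graded} as closely as possible, replacing the role of Proposition~\ref{covers} with a symmetric analog and then tracking how $\tfrac12|\M| + b_1^\phi - b_0^\phi$ changes under a single cover relation. First I would establish (in propositions parallel to Definition~\ref{curve union}, Definition~\ref{simp conn}, Lemma~\ref{alpha does it}, and Proposition~\ref{covers}) that every cover relation $\P \covered \Q$ in $\NC_{(\S^\pm,\B,\D^\pm,\phi)}$ is obtained from $\P$ by a \emph{symmetric augmentation} along a simple connector $\alpha$ --- but now, because blocks come in $\phi$-orbits of size $1$ or $2$, there are two flavors of augmentation: either $\alpha$ is a symmetric arc (so $\alpha = \phi(\alpha)$) and we augment along $\alpha$ alone, or $\alpha$ is one arc of a symmetric pair $\alpha, \phi(\alpha)$ and we must augment simultaneously along both $\alpha$ and $\phi(\alpha)$ (which may connect the same pair of blocks or two different symmetric pairs of blocks). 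The curve-set machinery of Section~\ref{marked sec} --- Propositions~\ref{curve set le}, \ref{curve set det}, and Proposition~\ref{closure} --- should go through essentially verbatim in the symmetric setting (working with $\phi$-stable curve sets), so I would invoke symmetric analogs of those and reduce the theorem to a local computation at a cover.

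The heart of the argument is then a case analysis computing the change in $\tfrac12|\M| + b_1^\phi - b_0^\phi$ for each type of symmetric cover. Since $|\M|$ is fixed, I need to show the cover adds exactly $1$ to $b_1^\phi - b_0^\phi$. In the case of a symmetric connector $\alpha = \phi(\alpha)$: either $\alpha$ joins two distinct symmetric blocks $E = \phi(E)$ and $E' = \phi(E')$ into one symmetric block, decreasing the symmetric-block count by one and leaving the count of $\phi$-stable homology contributions unchanged, so $b_0^\phi$ is unchanged while... wait --- here I must be careful: $b_0^\phi$ counts symmetric \emph{pairs} of distinct blocks, so joining two symmetric blocks changes neither the pair count nor the symmetric-$(-C)$ circle count directly; the net $+1$ comes instead from a new circle or from the bookkeeping of rings, exactly as in the proof of Theorem~\ref{graded}. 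The genuinely new phenomena are: (i) a symmetric connector joining a symmetric block to itself, creating a \emph{symmetric} new circle $C$ with $\phi(C) = C$ (which does \emph{not} lie in the kernel, so $b_1^\phi$ is unchanged) versus creating a circle $C$ with $\phi(C) = -C$ (which does lie in the kernel); and (ii) an arc-pair connector, where $\alpha$ and $\phi(\alpha)$ together either merge two symmetric pairs of blocks into one symmetric pair (so the ``number of $\phi$-orbits of blocks'' drops appropriately and one checks $b_0^\phi$ drops by one), or merge a symmetric pair $\{E,\phi(E)\}$ into a single symmetric block, or create a symmetric pair of new non-bounding circles $C, \phi(C)$ that are neither equal nor opposite (contributing $+1$ to $b_1^\phi$). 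In every subcase I would also redo the ``joining because of shared boundary rings'' step of Theorem~\ref{graded}'s proof, now keeping track of whether the identified rings are symmetric or form a symmetric pair, and whether the annulus glued in is $\phi$-stable. Each subcase is a short local computation, and in each the net effect on $b_1^\phi - b_0^\phi$ is $+1$.

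I would organize this as: (a) state and prove the symmetric analogs of Definitions~\ref{curve union}--\ref{simp conn}, Lemma~\ref{alpha does it}, and Proposition~\ref{covers} --- citing the non-symmetric proofs where the arguments are identical after passing to $\phi$-orbits, and highlighting only the new points (symmetric arcs can connect a block to itself "through" a fixed point; an arc pair must be handled as a unit; the exclusions in Definitions~\ref{sym arc def} and \ref{sym ring def} about empty digons/annuli containing fixed points ensure the curve union is still a legitimate embedded block); (b) verify the rank formula at the bottom element $\hat 0$, where every marked point is a singleton block, all of $\D^\pm$ is paired by $\phi$ into $|\D|$ symmetric pairs and $\B$ splits into $\phi$-fixed points and $\phi$-swapped pairs, giving $b_0^\phi(\hat 0) = |\D| + (\text{number of swapped pairs in }\B)$ and $b_1^\phi(\hat 0) = 0$, which one checks equals $\tfrac12|\M| - \rank(\hat 0)$ with $\rank(\hat 0) = 0$; (c) carry out the cover-relation case analysis above.

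The main obstacle I expect is step (a), specifically proving the symmetric version of Proposition~\ref{covers}: showing that if $\P < \R \le \Q = \P \cup \{\alpha,\phi(\alpha)\}$ then $\{\alpha,\phi(\alpha)\} \subseteq \curve(\R)$. In the non-symmetric proof this is already the delicate part (the arguments with the rings $U, U', U''$ in Figures~\ref{tworings fig} and \ref{gfig}), and the symmetric setting roughly doubles the bookkeeping: the curve $\beta \in \curve(\R) \setminus \curve(\P)$ producing $\alpha$ comes with its mirror $\phi(\beta)$, and one must check that the constructions do not accidentally force a curve or ring through a fixed point of $\phi$ (which would violate symmetric isotopy) and that the reconstructed $\alpha$ respects the symmetry. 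A secondary subtlety is making sure the cover-relation classification correctly distinguishes the symmetric-arc covers from the arc-pair covers --- in particular that no cover simultaneously does "half" of each --- which follows because $\phi$ permutes the blocks of any noncrossing partition, so the set of curves added in passing from $\P$ to a cover must itself be $\phi$-stable and hence is either a single symmetric arc or a single arc pair.
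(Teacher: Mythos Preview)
Your plan is the paper's plan: establish a symmetric analog of Proposition~\ref{covers} (the paper's Proposition~\ref{sym covers}, built on Definition~\ref{sym simp conn} and Lemma~\ref{sym alpha does it}), verify the formula at the bottom element, and then check that each cover increases $b_1^\phi - b_0^\phi$ by exactly~$1$, split into the initial block-combination step and the subsequent annulus-insertion step.

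That said, your case analysis has a concrete misstep that would derail the computation. A symmetric arc $\alpha = \phi(\alpha)$ passes through a fixed point of~$\phi$, and since $\phi$ fixes no marked point (it swaps opposite double points, and an orientation-preserving involution acting as a local half-turn cannot fix a boundary marked point), the endpoints of~$\alpha$ are swapped by~$\phi$. Hence a symmetric connector \emph{always} connects $E$ to $E' = \phi(E)$; your first case ``$\alpha$ joins two distinct symmetric blocks $E = \phi(E)$ and $E' = \phi(E')$'' does not occur. The two genuine cases are $E \neq \phi(E)$ (so $b_0^\phi$ drops by~$1$) and $E = \phi(E)$ (so $b_1^\phi$ rises by~$1$). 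In the second of these, your worry that the new circle $C$ might satisfy $\phi(C) = C$ with the same orientation is unfounded: because the part of $\alpha$ outside $E$ contains a fixed point of~$\phi$, the map $\phi$ must either reverse the orientation of $C$ or send it to a non-isotopic circle, so $C$ always contributes to~$b_1^\phi$.

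You should also be prepared for a case analysis considerably longer than you sketch. The paper distinguishes eight cases (two with subcases) for the initial combination along $\alpha,\phi(\alpha)$, according to coincidences among $E,E',\phi(E),\phi(E')$, and six further cases for the annulus-insertion step, according to coincidences among the blocks sharing a boundary ring. Some of these (the paper's Cases~7b and~8b, where $E = \phi(E')$ but $E \neq \phi(E)$, and their self-join analog) are exactly the phenomenon you dismissed: a new non-bounding circle \emph{can} be chosen so that $\phi$ preserves it with orientation, hence it does \emph{not} contribute to $b_1^\phi$ --- but in precisely those cases $b_0^\phi$ compensates. Getting these right depends on the exclusions in Definition~\ref{sym simp conn} (a simple symmetric connector or pair may not, together with blocks of~$\P$, trap an isolated pair of double-point singletons, or bound an annulus containing only non-double fixed points), and the proof of Proposition~\ref{sym covers} must separately dispose of curves $\alpha$ that fall into those excluded configurations by replacing them with a legitimate connector~$\gamma$. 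Your outline does not anticipate this, and it is where most of the work beyond the non-symmetric proof lies.
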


Once again, lower intervals in the noncrossing partition poset have a natural product decomposition.
The following proposition is immediate from the definitions.

\begin{prop}\label{sym lower}
If $\P$ is a noncrossing partition of $(\S^\pm,\B,\D^\pm,\phi)$ with nondegenerate symmetric blocks $E_1,\ldots,E_k$, degenerate symmetric blocks $F_1,\ldots,F_\ell$, and symmetric pairs $\set{F_{\ell+1},\phi(F_{\ell+1})},\ldots,\set{F_m,\phi(F_m)}$ of blocks, then the interval below $\P$ in $\NC_{(\S^\pm,\B,\D^\pm,\phi)}$ is isomorphic to 
\[\prod_{i=1}^k\NC_{(E_i,\partial(E_i)\cap\M,\int(E_i)\cap\D^\pm,\phi|_{E_i})}\times\prod_{i=1}^m\NC_{(F_i,F_i\cap\M)}.\]
\end{prop}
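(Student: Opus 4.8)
The plan is to follow the (essentially definitional) proof of Proposition~\ref{lower}, adding the bookkeeping needed to track the action of $\phi$. Fix an embedding of $\P$ whose blocks are the nondegenerate symmetric blocks $E_1,\dots,E_k$, the degenerate symmetric blocks $F_1,\dots,F_\ell$, and the symmetric pairs $\set{F_i,\phi(F_i)}$ for $\ell<i\le m$. I would first build a map from the product in the statement to the interval below $\P$: given symmetric noncrossing partitions $\P_i$ of $(E_i,\partial(E_i)\cap\M,\int(E_i)\cap\D^\pm,\phi|_{E_i})$ for $1\le i\le k$ and noncrossing partitions $\Q_i$ of $(F_i,F_i\cap\M)$ for $1\le i\le m$, let $\P'$ be the collection consisting of all blocks of all the $\P_i$, all blocks of all $\Q_i$ with $i\le\ell$, and, for $\ell<i\le m$, all blocks of $\Q_i$ together with their $\phi$-images (these images forming the partition $\phi(\Q_i)$ of $\phi(F_i)$). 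One checks that $\P'$ is a symmetric noncrossing partition of $\S^\pm$ with $\P'\le\P$: each block of $\P'$ is an embedded block of $\S^\pm$ because an embedded block of a block $E$ of $\P$ is again an embedded block of the ambient surface (the structure put on $E$ by Definition~\ref{sym embedded def} is exactly the restriction of the ambient structure, so rings, arcs, and boundary segments of $E$ are such in $\S^\pm$); the blocks of $\P'$ are pairwise disjoint because $E_1,\dots,E_k,F_1,\dots,F_m,\phi(F_{\ell+1}),\dots,\phi(F_m)$ are; they cover $\M$; no ring occurs twice, since none did in $\P$ and each refinement stays inside a single block of $\P$; and $\phi$ permutes the blocks of $\P'$ by construction.

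Conversely, given $\P'\le\P$, choose ($\phi$-equivariant) embeddings so that every block of $\P'$ lies in a block of $\P$. Since each block of $\P'$ lies in a \emph{unique} block of $\P$ (distinct blocks of $\P$ being disjoint) and $\phi$ permutes the blocks of both $\P$ and $\P'$, the blocks of $\P'$ lying inside a symmetric block $E_i$ form a $\phi|_{E_i}$-stable collection, hence a symmetric noncrossing partition $\P_i$ of $(E_i,\partial(E_i)\cap\M,\int(E_i)\cap\D^\pm,\phi|_{E_i})$; for each $i$, the blocks of $\P'$ lying inside $F_i$ form a noncrossing partition $\Q_i$ of $(F_i,F_i\cap\M)$ (for $i\le\ell$ this $\Q_i$ is automatically $\phi$-stable, and for $i>\ell$ the blocks of $\P'$ inside $\phi(F_i)$ are exactly the $\phi$-images of the blocks of $\Q_i$, so contribute no new data). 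This recovers a tuple in the product, and the two assignments are mutually inverse.

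Finally, both directions preserve order: for $\P',\P''$ in the interval below $\P$ we have $\P'\le\P''$ iff every block of $\P'$ lies in a block of $\P''$, and since both refine $\P$ blockwise this holds iff the corresponding components compare componentwise in the respective noncrossing partition posets. Hence the bijection is a poset isomorphism, which proves the proposition.

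The only step that is not purely formal — and the one I expect to be the main obstacle — is the claimed compatibility between the structure of a symmetric block (or of a member of a symmetric pair) and the ambient structure: that a curve in a block $E$ of $\P$ fails to bound a monogon in $E$, fails to combine with a boundary segment of $E$ to bound a digon in $E$, and fails to be one of the excluded closed curves of Definition~\ref{sym ring def} relative to $E$, precisely when the same holds relative to $\S^\pm$, and that symmetric isotopies of $E$ coincide with symmetric isotopies of $\S^\pm$ supported in $E$. This is the same verification underlying Proposition~\ref{lower}, now carried out $\phi$-equivariantly; granting it, the proposition is immediate from the definitions.
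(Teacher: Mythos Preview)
Your proposal is correct and matches the paper's approach: the paper gives no proof, stating only that the proposition ``is immediate from the definitions,'' and what you have written is precisely the routine unpacking of that immediacy. Your explicit identification of the one non-formal step (compatibility of the block's intrinsic arc/ring/isotopy structure with the ambient one) is a useful addition, but the paper evidently regards this as part of what is ``immediate.''
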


The proof of Theorem~\ref{sym graded} follows the same general outline as the analogous proofs without symmetry/double points. 
The fact that $\NC_{(\S^\pm,\B,\D^\pm,\phi)}$ is a bounded poset follows by the same proof as Lemma~\ref{poset}.
To continue the proof, we again characterize noncrossing partitions in terms of certain curves they contain.

\begin{definition}[\emph{Curve set, symmetric-with-double-points case}]\label{sym curve set def}
The \newword{curve set} $\curve(E)$ of an embedded block~$E$ is the set of all arcs and boundary segments of $\S^\pm$ that 
(up to symmetric isotopy) 
are contained in~$E$.
The \newword{curve set} $\curve(\P)$ of a noncrossing partition $\P$ is the union of the curve sets of its blocks.
The curve set is closed under the action of $\phi$.
\end{definition}

The following propositions hold by the same proofs as for marked surfaces (Propositions~\ref{curve set le} and~\ref{curve set det}).

\begin{prop}\label{sym curve set le}
Two symmetric noncrossing partitions $\P$ and $\Q$ of $\S^\pm$ have $\P\le\Q$ if and only if $\curve(\P)\subseteq\curve(\Q)$.
\end{prop}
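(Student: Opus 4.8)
The plan is to transplant the proof of Proposition~\ref{curve set le} to the symmetric setting, inserting $\phi$-equivariance wherever isotopy representatives are chosen. The direction $\P\le\Q\Rightarrow\curve(\P)\subseteq\curve(\Q)$ is immediate from the definitions. For the converse, assume $\curve(\P)\subseteq\curve(\Q)$ and show that each block $E$ of $\P$ admits a symmetric-isotopy embedding inside some block of $\Q$. Since $\phi$ permutes the blocks of $\P$ and of $\Q$ and the hypothesis $\curve(\P)\subseteq\curve(\Q)$ is $\phi$-stable, it suffices to establish containment for one block in each $\phi$-orbit: if $E\subseteq E'$ then $\phi(E)\subseteq\phi(E')$, and both containments can be realized by one symmetric isotopy.

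First I would dispose of the degenerate blocks --- points, boundary segments, symmetric arcs, and members of arc pairs --- where containment is immediate because the defining points or curves lie in $\curve(\P)\subseteq\curve(\Q)$. For a nondegenerate block $E$ (a nondegenerate symmetric block, or one member of a symmetric pair of blocks), the marked points of $E$ are joined by curves of $\curve(E)\subseteq\curve(\Q)$, so they lie in a single block $E'$ of $\Q$, and $\curve(E)\subseteq\curve(E')$. As in Proposition~\ref{curve set le}, for each ring $U$ in $\partial E$ I would produce a curve $\alpha_U\in\curve(E)$ encircling $U$, so $U$ has a representative inside $E'$; take the boundary segments of $\partial E$ directly; and choose the boundary arcs of $E$ inside $E'$, pairwise disjoint off their endpoints. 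Carrying this out $\phi$-equivariantly --- setting $\alpha_{\phi(U)}=\phi(\alpha_U)$, and recalling from Remark~\ref{no sym boundary} that symmetric arcs or rings occur in $\partial E$ only when $E$ is itself a symmetric arc --- yields a symmetric-isotopy representative of $E$ whose boundary lies in $E'$.

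The crux is the concluding topological step. If $E\not\subseteq E'$, then $\partial(E\cap E')$ is the disjoint union of $\partial E$ with rings $C_1,\ldots,C_k$ of $\S^\pm$ lying in $\int(E)$; since $\int(E)$ contains no marked points --- and contains double points only when $E$ is symmetric, in which case one passes to a $\phi$-fundamental domain of $\int(E)$ --- these rings cobound a topologically nontrivial region, yielding a curve that leaves $E'$ through some $C_i$, stays in $E$, and re-enters $E'$ through some $C_j$. The extra work beyond the non-symmetric case is to arrange this curve to be an \emph{arc of $\S^\pm$} in the sense of Definition~\ref{sym arc def}: a curve $\beta$ with $\beta=\phi(\beta)$, or with $\beta,\phi(\beta)$ disjoint off their endpoints and not bounding a forbidden digon. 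This is available because the configuration is $\phi$-stable --- when $E$ is one of a pair the argument takes place in $E$, away from the fixed locus; when $E$ is symmetric one works in a fundamental domain of $\int(E)$ and then symmetrizes --- while a short case analysis, using that $\phi$ acts as a half-turn near its fixed points, rules out $\beta$ meeting a fixed point or a double point and rules out the forbidden digons. Then $\beta\in\curve(E)\subseteq\curve(E')$ contradicts the construction of $\beta$, so $E\subseteq E'$.

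I expect the main obstacle to be exactly this last point --- checking that $\beta$ and the encircling curves $\alpha_U$ can always be chosen to meet the symmetry clauses of Definition~\ref{sym arc def}, not merely to be arcs of the underlying surface; the rest is a faithful translation of the proof of Proposition~\ref{curve set le}.
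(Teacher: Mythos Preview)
Your proposal is correct and takes essentially the same approach as the paper, which simply asserts that this proposition ``holds by the same proof'' as Proposition~\ref{curve set le}. You have gone further than the paper in spelling out where the $\phi$-equivariance enters and in flagging the genuine subtlety (that the auxiliary curves $\alpha_U$ and $\beta$ must satisfy the symmetry clauses of Definition~\ref{sym arc def}, and that $\int(E)$ may contain double points when $E$ is symmetric), but this is elaboration of the same argument rather than a different route.
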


\begin{prop}\label{sym curve set det}
A noncrossing partition of $(\S^\pm,\B,\D^\pm,\phi)$ is uniquely determined 
(up to symmetric isotopy)
 by its curve set.
\end{prop}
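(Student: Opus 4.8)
The plan is to copy the two-line deduction used for Proposition~\ref{curve set det}, now reading anti-symmetry of the order off the symmetric analog of Lemma~\ref{poset} (noted above to hold by the same argument). First I would take noncrossing partitions $\P$ and $\Q$ of $(\S^\pm,\B,\D^\pm,\phi)$ with $\curve(\P)=\curve(\Q)$. Proposition~\ref{sym curve set le}, applied once in each direction, then gives $\P\le\Q$ and $\Q\le\P$, and anti-symmetry of $\le$ forces $\P=\Q$.

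So the content is entirely in checking that the anti-symmetry argument of Lemma~\ref{poset} goes through verbatim in the presence of symmetry and double points, and I would spell this out. Suppose $\P\le\Q\le\P$. The three partitions induce the same set partition of $\M=\B\cup\D^\pm$. Fix a block $E$ of $\P$, let $E'$ be the block of $\Q$ containing a chosen representative of $E$, and let $E''$ be the block of a second representative of $\P$ containing $E'$. If $E$ is degenerate --- a marked point (possibly a double point) or a symmetric arc --- then $E=E'$ is immediate. If $E$ is nondegenerate, every boundary segment and empty boundary component in $\partial E$ is also in $\partial E'$; every arc, arc pair, or boundary segment in $\partial E$, together with its counterpart in $\partial E''$, contains the corresponding curve in $\partial E'$ between them, so that portion of $\partial E$ can be deformed onto $\partial E'$; and likewise each ring or symmetric pair of rings in $\partial E$ contains the corresponding ring(s) of $\partial E'$ between it and the ring(s) of $\partial E''$. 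Performing all of these deformations $\phi$-equivariantly (and, for $\phi$-invariant boundary components and symmetric pairs, simultaneously on both halves of the $\phi$-orbit) shows $E$ and $E'$ are symmetrically isotopic; hence $\P=\Q$.

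The main obstacle --- the only place symmetry plays a role --- is ensuring that these sandwiching deformations commute with $\phi$ and never push a curve through a fixed point of $\phi$. This is automatic. By Remark~\ref{no sym boundary}, no symmetric arc or ring occurs in $\partial E$ unless $E$ is itself a symmetric arc (already handled); and whenever two representatives of the same arc, arc pair, boundary segment, or ring (pair) are compared, the strip or annulus they co-bound contains no fixed point of $\phi$, since otherwise they would not agree up to symmetric isotopy. Thus each intermediate deformation takes place in a region disjoint from $\Fix(\phi)$ (and disjoint from $\D^\pm$, which symmetric isotopies fix pointwise anyway) and can be taken $\phi$-equivariant. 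The remainder is purely formal, identical to the proofs of Lemma~\ref{poset} and Proposition~\ref{curve set det}.
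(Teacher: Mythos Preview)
Your proposal is correct and takes essentially the same approach as the paper: the paper simply says that Proposition~\ref{sym curve set det} ``holds by the same proof as'' Proposition~\ref{curve set det}, and you reproduce that two-line deduction, additionally spelling out why the anti-symmetry argument of Lemma~\ref{poset} survives in the symmetric-with-double-points setting. One small omission: your list of degenerate blocks (``a marked point \ldots\ or a symmetric arc'') leaves out the case where $E$ is a non-symmetric arc or boundary segment forming half of a symmetric pair, but the conclusion $E=E'$ is equally immediate there, so this does not affect the argument.
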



To describe cover relations in $\NC_{(\S^\pm,\B,\D^\pm,\phi)}$, we adapt Definition~\ref{simp conn} to accommodate double points and symmetry.  

\begin{definition}[\emph{Simple symmetric connector/pair and augmentation}]\label{sym simp conn}
Suppose~$\P$ is a noncrossing partition of $(\S^\pm,\B,\D^\pm,\phi)$.
A \newword{simple symmetric connector} is a symmetric arc $\alpha$ that is a simple connector for $\P$ 
(taking the definition of simple connector from Definition~\ref{simp conn} verbatim, with symmetric isotopy replacing ordinary isotopy), 
but ruling out one case:
\begin{itemize}
\item
$\alpha$ may not combine with blocks of $\P$ to bound a disk in $\S$ containing a pair of opposite double points in~$\D^\pm$ that are trivial blocks in $\P$.
See the top-left picture of Figure~\ref{simpexclude fig}.
(The picture shows an annulus with one double point and one non-double $\phi$-fixed point.)
\end{itemize}
\begin{figure}
\scalebox{0.5}{\includegraphics{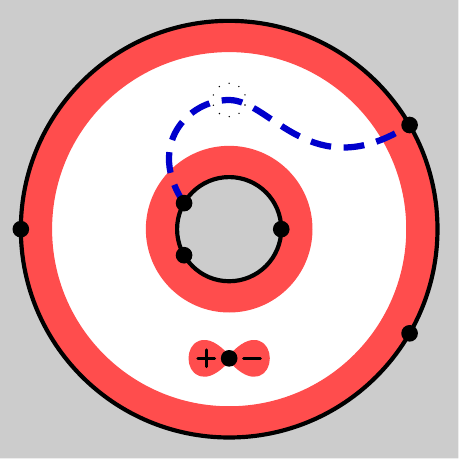}}
\quad
\scalebox{0.5}{\includegraphics{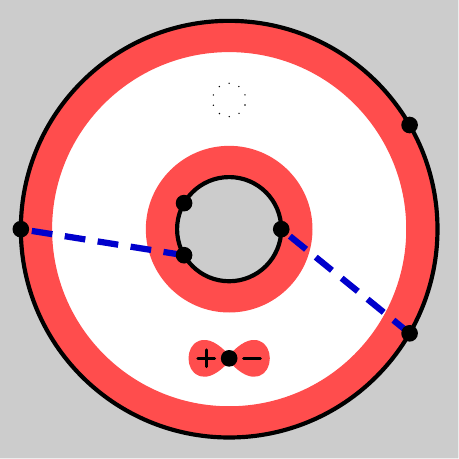}}
\quad
\scalebox{0.5}{\includegraphics{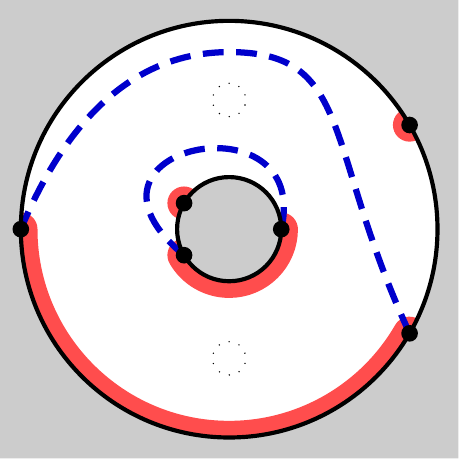}}\\[12pt]
\scalebox{0.5}{\includegraphics{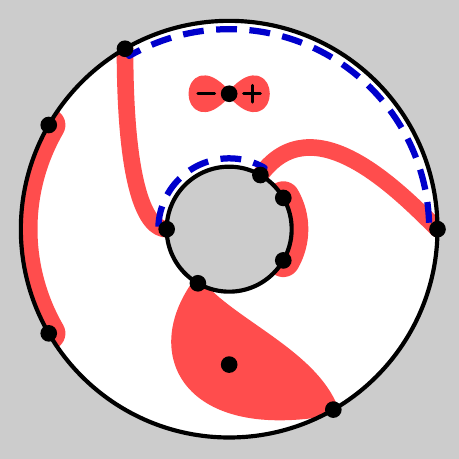}}
\quad
\scalebox{0.5}{\includegraphics{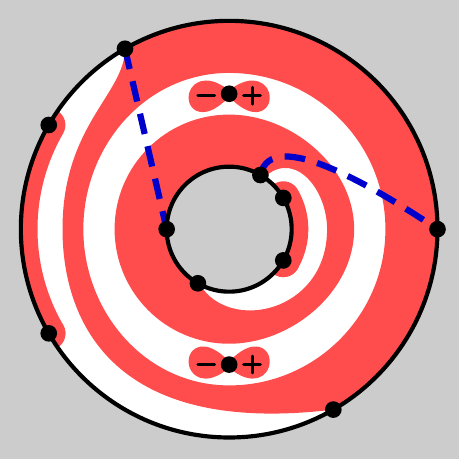}}
\quad
\scalebox{0.5}{\includegraphics{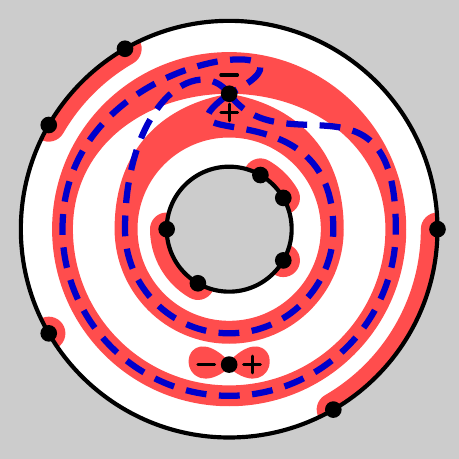}}
\caption{Some excluded cases in the definition of simple symmetric connectors/pairs}
\label{simpexclude fig}
\end{figure}
The \newword{augmentation of $\P$ along $\alpha$} is defined exactly as in Definition~\ref{simp conn}, with the thickenings in the curve union chosen symmetrically.
This augmentation is denoted $\P\cup\alpha\cup\phi(\alpha)$, since $\alpha=\phi(\alpha)$.

A \newword{simple symmetric pair of connectors} is a symmetric pair $\alpha,\phi(\alpha)$ of arcs or boundary segments, each of which is a simple connector for $\P$, but ruling out two possibilities:
\begin{itemize}
\item
$\alpha$ and $\phi(\alpha)$ may not combine with blocks of $\P$ to bound a disk in $\S$ containing a pair of opposite double points in~$\D^\pm$ that are trivial blocks in $\P$.
See the top-middle picture and bottom pictures of Figure~\ref{simpexclude fig}.
(The top-middle picture again shows an annulus with one double point and one non-double $\phi$-fixed point, while the bottom picture shows an annulus with two double points.)
\item
$\alpha$ and $\phi(\alpha)$ may not combine with blocks of $\P$ to bound an annulus containing only non-double fixed points of $\phi$. 
(In this excluded case, the annulus would contain exactly $2$ fixed points.)
See the top-right picture of Figure~\ref{simpexclude fig}.
(The picture shows an annulus with no double points and two non-double $\phi$-fixed points.)
\end{itemize}
We emphasize the requirement that $\alpha,\phi(\alpha)$ is a symmetric pair of arcs or boundary segments.
In particular, by definition for arcs and obviously for boundary segments, $\alpha$ and $\phi(\alpha)$ are disjoint except possibly at endpoints.

The \newword{augmentation of $\P$ along $\alpha$ and $\phi(\alpha)$}, denoted $\P\cup\alpha\cup\phi(\alpha)$, is obtained as the augmentation along $\alpha$ as in Definition~\ref{simp conn}, further augmented along $\phi(\alpha)$, with the thickenings in the curve unions chosen so as to make $\P\cup\alpha\cup\phi(\alpha)$ symmetric, and in two cases, appending to additional disks to the curve union.
Specifically, if $\alpha$ and $\phi(\alpha)$ combine with blocks of $\P$ to bound a disk containing no double points (but necessarily containing a non-double fixed point of $\phi$), then adjoin that disk to the curve union, as illustrated in the top three pictures in Figure~\ref{diskfill fig}.
Also, it is possible that $\alpha$ and $\phi(\alpha)$, together with curves in $E$ and $E'$ (not excluding the case where $E=E'$), bound a disk containing a pair $d_+,d_-$ of double points, with $d_+$ in $E$ and $d_-$ in $E'$ or vice versa.
In this case, the union of $E$ and $E'$ with the thickened $\alpha$ and $\phi(\alpha)$ has some empty disks that must be filled in, as illustrated in the bottom three pictures in Figure~\ref{diskfill fig}.
\end{definition}

\begin{figure}
\scalebox{0.5}{\includegraphics{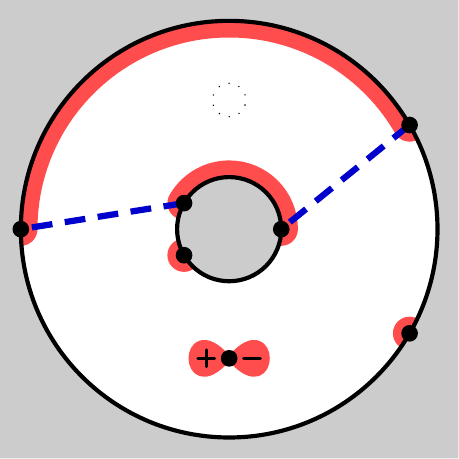}}
\quad
\scalebox{0.5}{\includegraphics{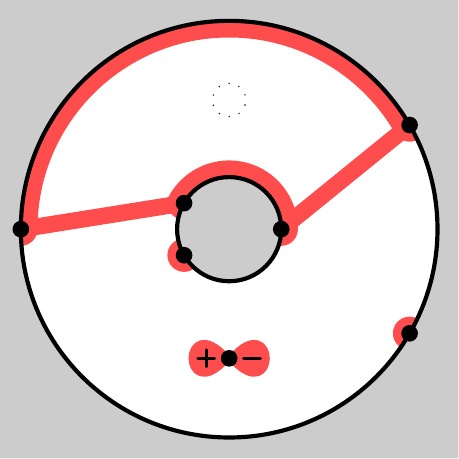}}
\quad
\scalebox{0.5}{\includegraphics{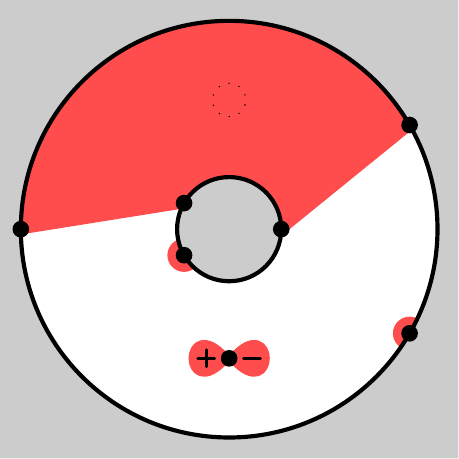}}\\[12pt]
\scalebox{0.5}{\includegraphics{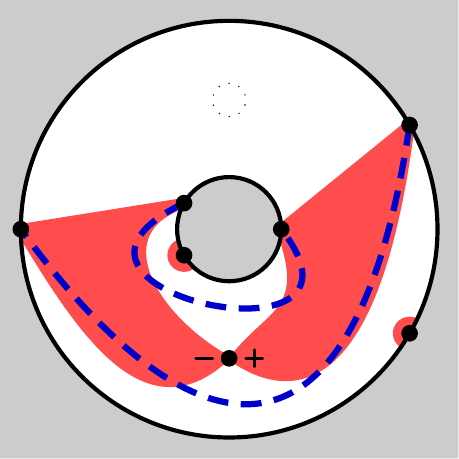}}
\quad
\scalebox{0.5}{\includegraphics{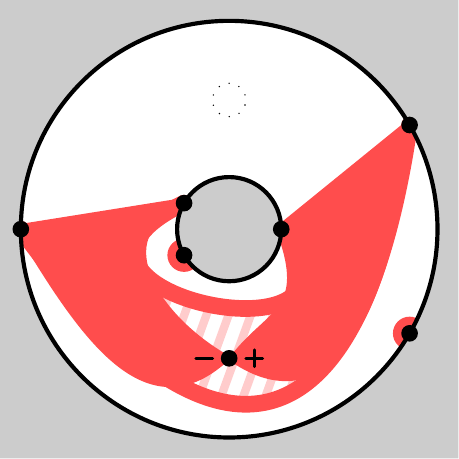}}
\quad
\scalebox{0.5}{\includegraphics{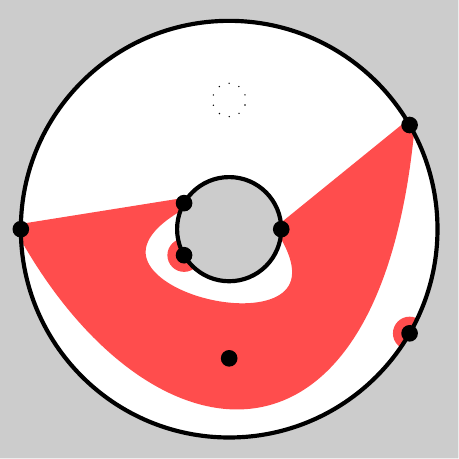}}
\caption{Filling in empty disks in the construction of the augmentation}
\label{diskfill fig}
\end{figure}

\begin{remark}\label{sym aug rem}
In Definition~\ref{sym simp conn}, ruling out the case where $\alpha$ combines (or $\alpha$ and $\phi(\alpha)$ combine) with blocks of $\P$ to bound a disk in $\S$ containing points in $\D$ is necessary to ensure that all components of the boundary of $\P\cup\alpha\cup\phi(\alpha)$ that are disjoint from the boundary of $\S$ are indeed rings in $\S^\pm$.
(See Proposition~\ref{no ring doub}.)
Ruling out the case where $\alpha$ combines (or $\alpha$ and $\phi(\alpha)$ combine) with blocks of~$\P$ to bound an annulus containing only non-double fixed points of $\phi$ is necessary for a similar reason.
Without this requirement, the augmentation would have a boundary component $U$ that fails to be a ring because it combines with $\phi(U)$ to form an annulus not containing any double points.
%
We do not need to rule out the case where a simple symmetric connector $\alpha$ combines with blocks of $P$ to bound an annulus containing only non-double fixed points of $\phi$, because in that case, the boundary of the annulus would contain a point (in $\alpha$) fixed by $\phi$, which is impossible.
\end{remark}

\begin{figure}
\scalebox{0.37}{\includegraphics{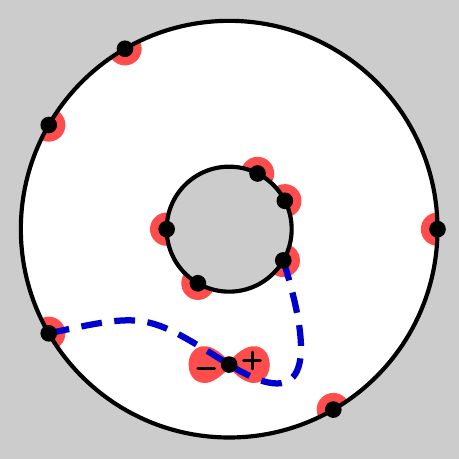}}
\,
\scalebox{0.37}{\includegraphics{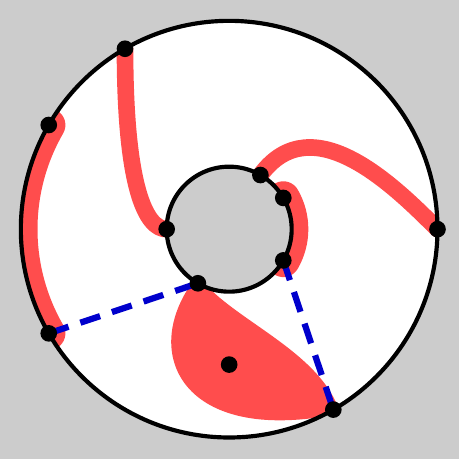}}
\,
\scalebox{0.37}{\includegraphics{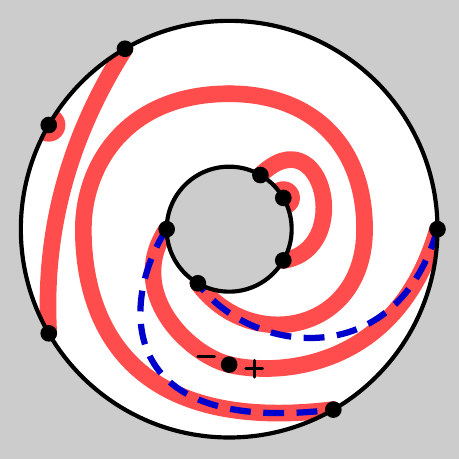}}
\,
\scalebox{0.37}{\includegraphics{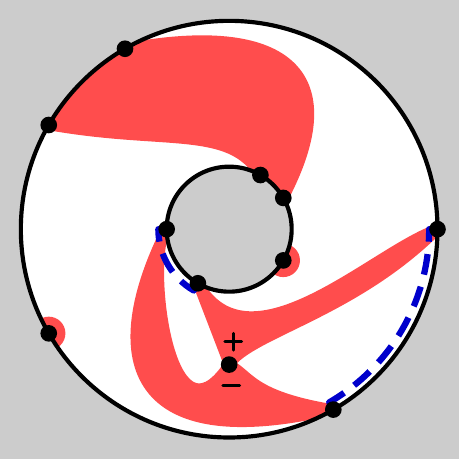}}\\[5pt]
\scalebox{0.37}{\includegraphics{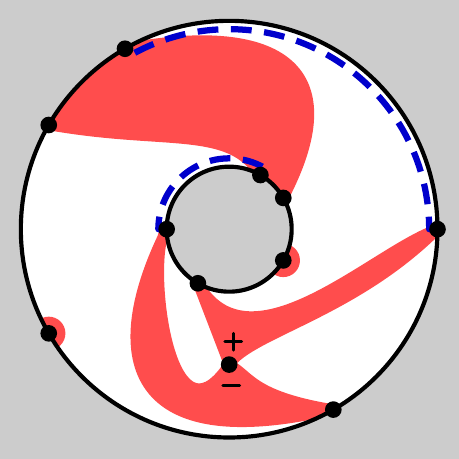}}
\,
\scalebox{0.37}{\includegraphics{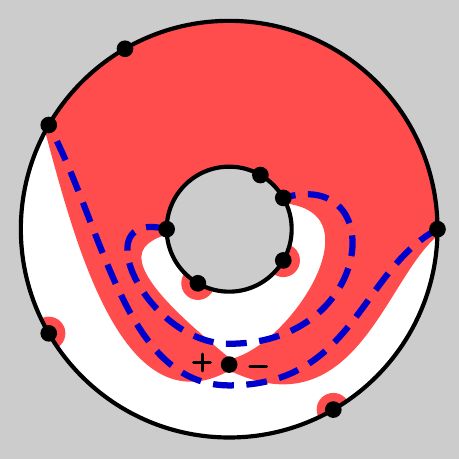}}
\,
\scalebox{0.37}{\includegraphics{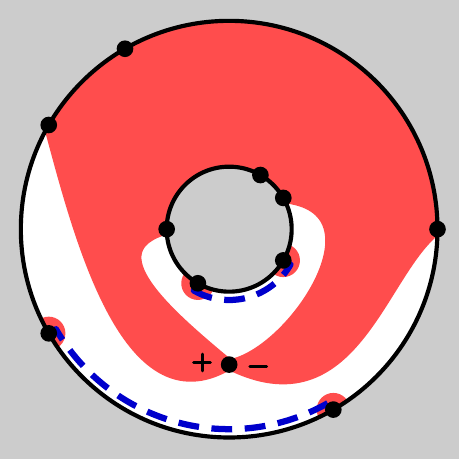}}
\,
\scalebox{0.37}{\includegraphics{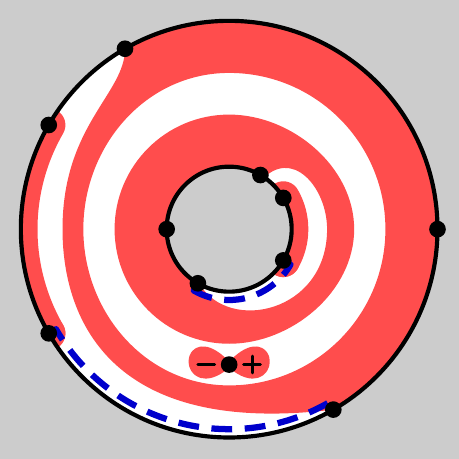}}\\[5pt]
\scalebox{0.37}{\includegraphics{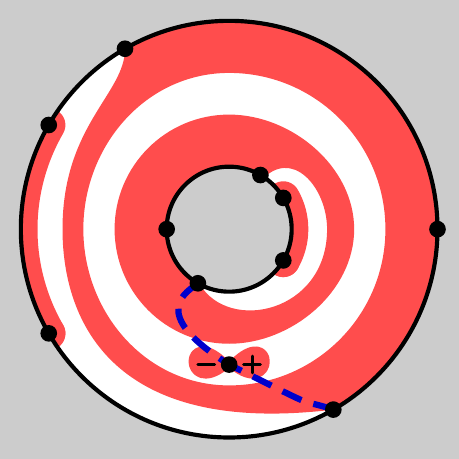}}
\,
\scalebox{0.37}{\includegraphics{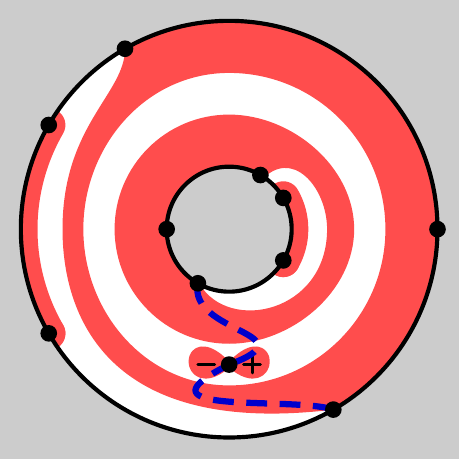}}
\,
\scalebox{0.37}{\includegraphics{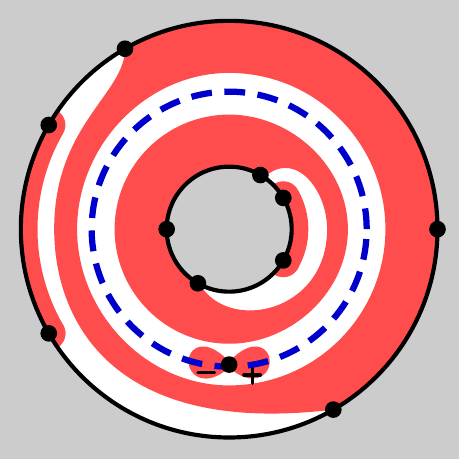}}
\,
\scalebox{0.37}{\includegraphics{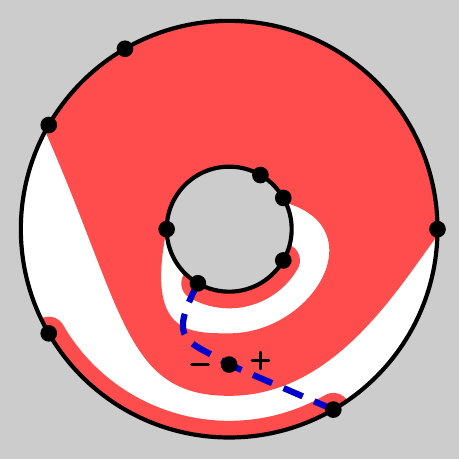}}\\[5pt]
\scalebox{0.37}{\includegraphics{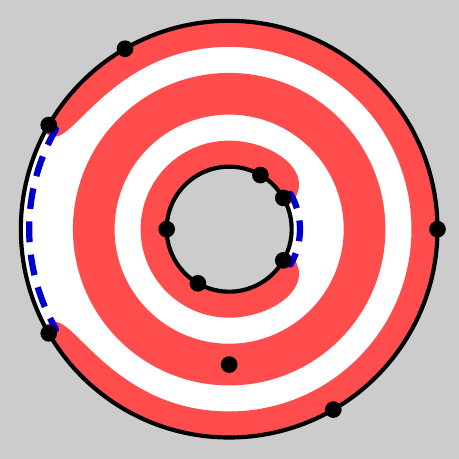}}
\,
\scalebox{0.37}{\includegraphics{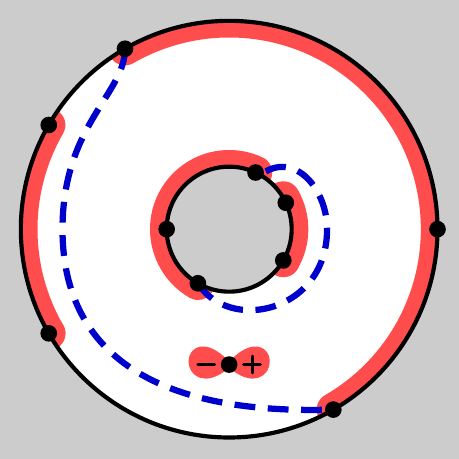}}
\,
\scalebox{0.37}{\includegraphics{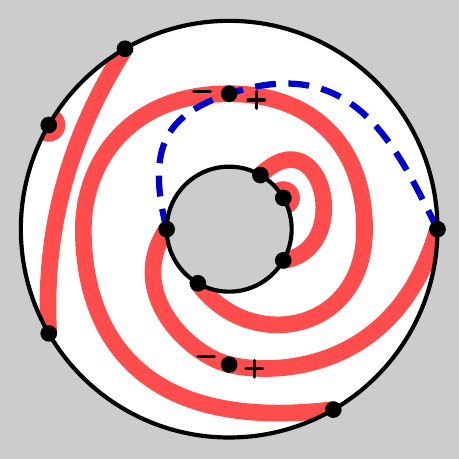}}
\,
\scalebox{0.37}{\includegraphics{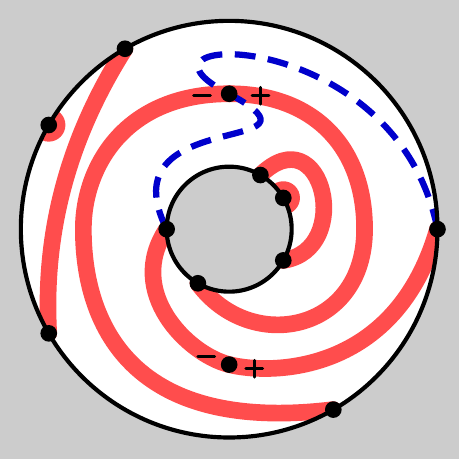}}\\[5pt]
\scalebox{0.37}{\includegraphics{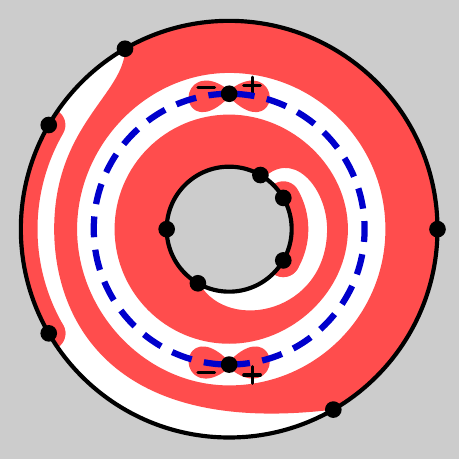}}
\,
\scalebox{0.37}{\includegraphics{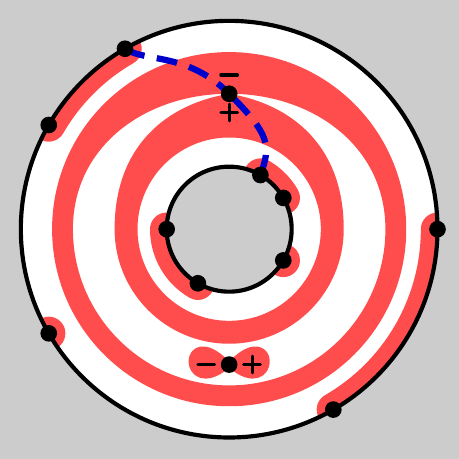}}
\,
\scalebox{0.37}{\includegraphics{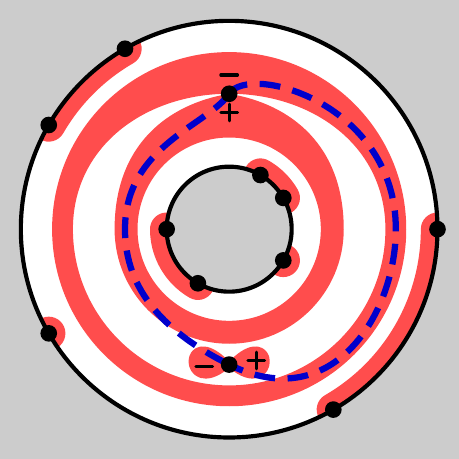}}
\,
\scalebox{0.37}{\includegraphics{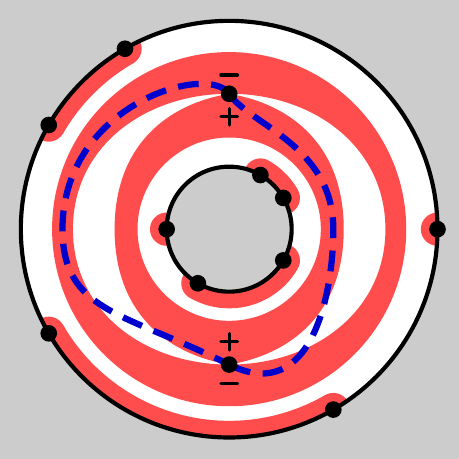}}\\[5pt]
\scalebox{0.37}{\includegraphics{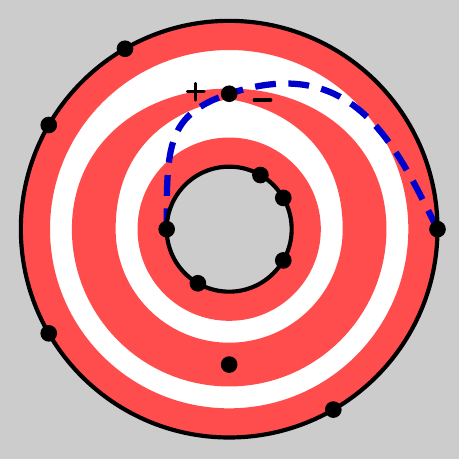}}
\,
\scalebox{0.37}{\includegraphics{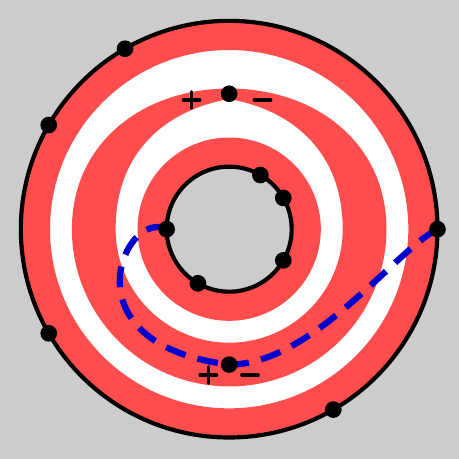}}
\,
\scalebox{0.37}{\includegraphics{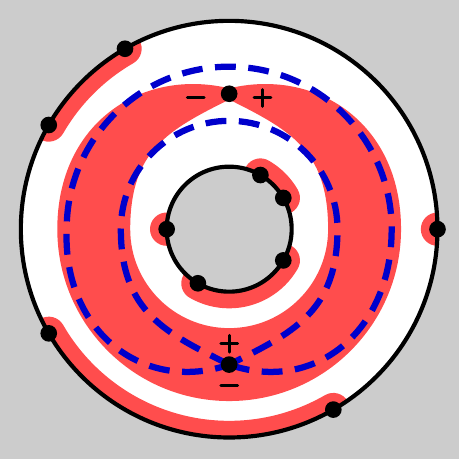}}
\,
\scalebox{0.37}{\includegraphics{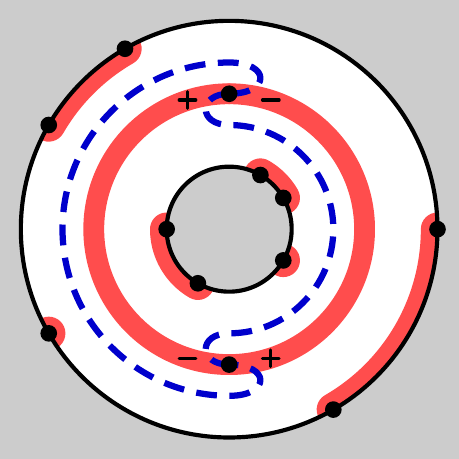}}
\caption{Simple symmetric connectors/pairs of connectors in the symmetric disk with one or two double points}
\label{simp pair}
\end{figure}

\begin{figure}
\scalebox{0.37}{\includegraphics{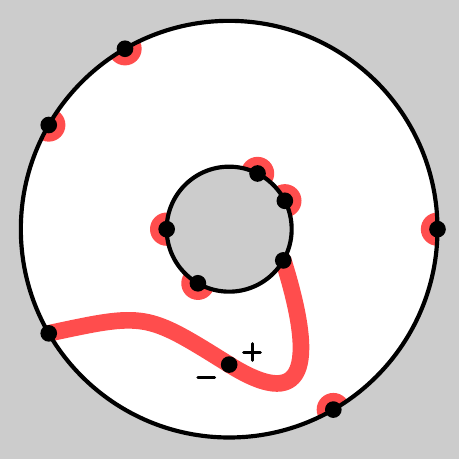}}
\,
\scalebox{0.37}{\includegraphics{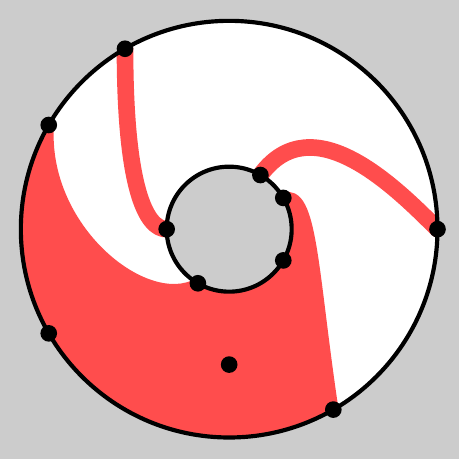}}
\,
\scalebox{0.37}{\includegraphics{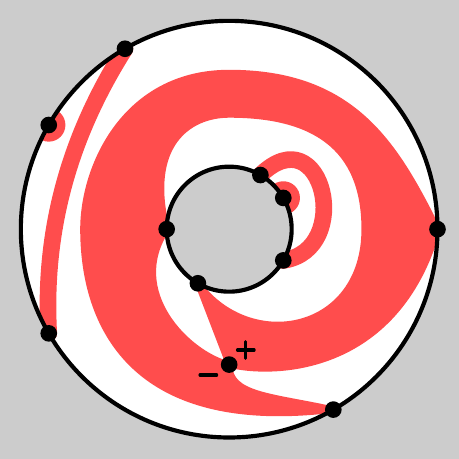}}
\,
\scalebox{0.37}{\includegraphics{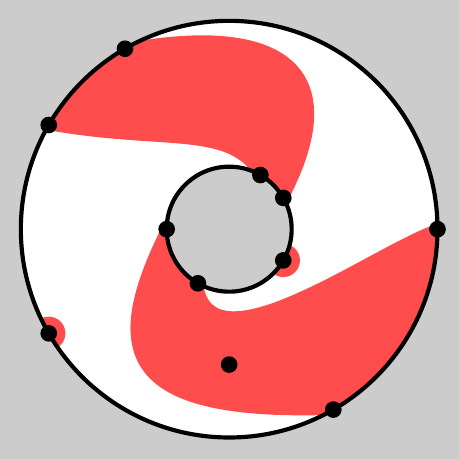}}\\[5pt]
\scalebox{0.37}{\includegraphics{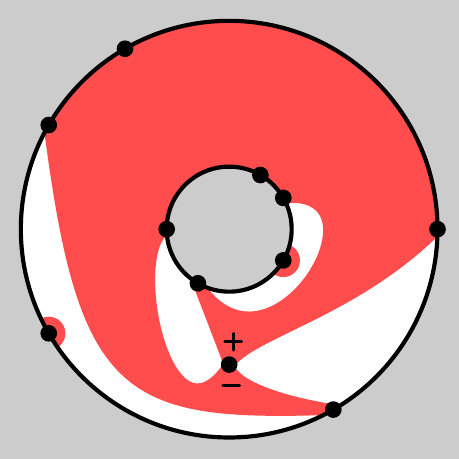}}
\,
\scalebox{0.37}{\includegraphics{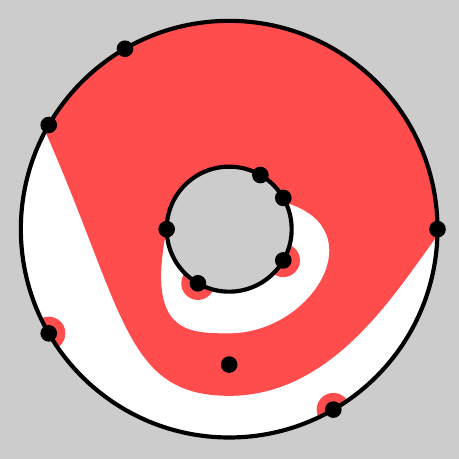}}
\,
\scalebox{0.37}{\includegraphics{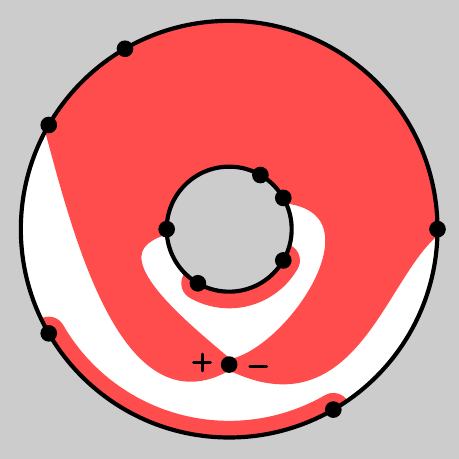}}
\,
\scalebox{0.37}{\includegraphics{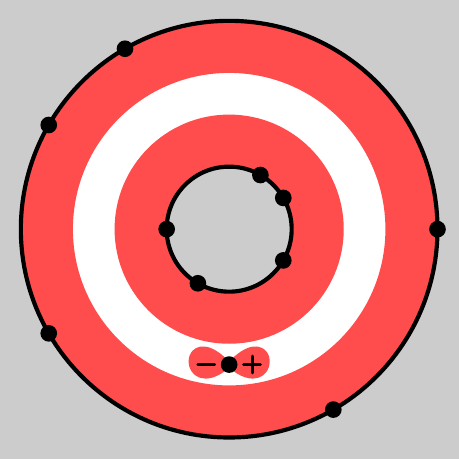}}\\[5pt]
\scalebox{0.37}{\includegraphics{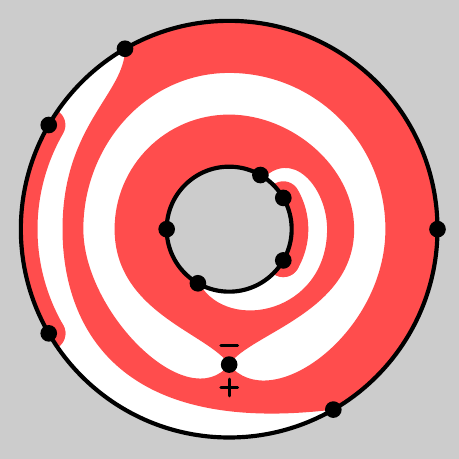}}
\,
\scalebox{0.37}{\includegraphics{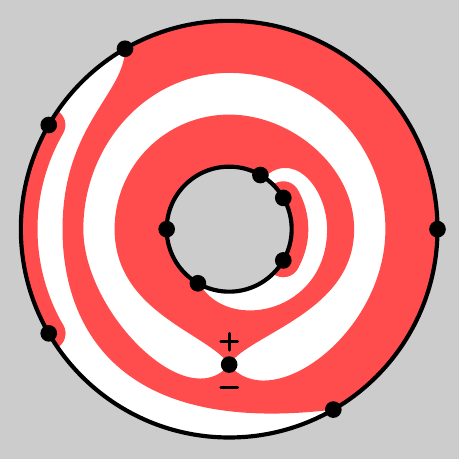}}
\,
\scalebox{0.37}{\includegraphics{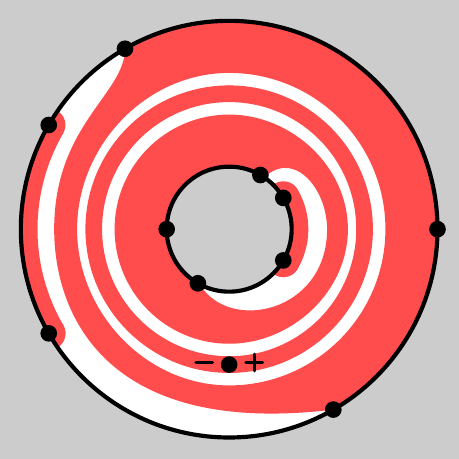}}
\,
\scalebox{0.37}{\includegraphics{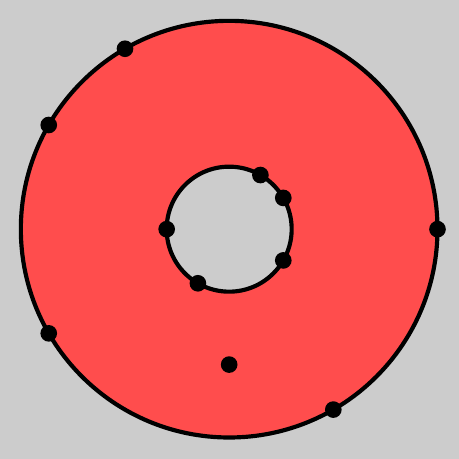}}\\[5pt]
\scalebox{0.37}{\includegraphics{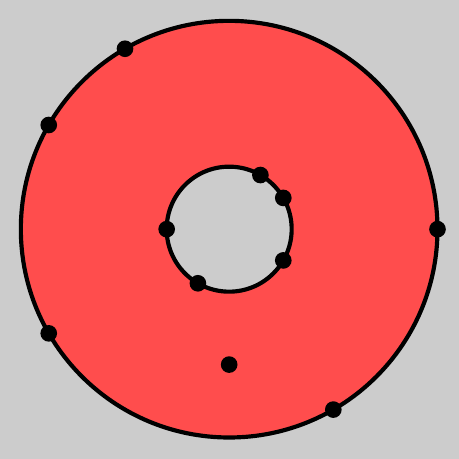}}
\,
\scalebox{0.37}{\includegraphics{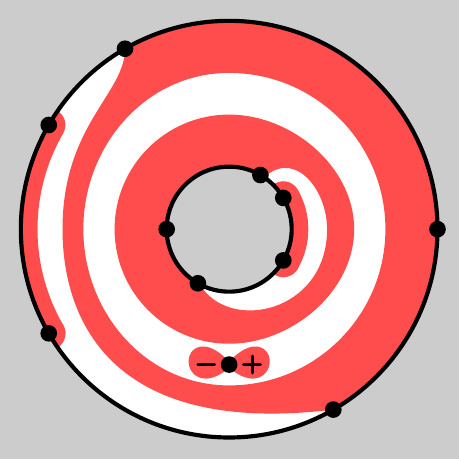}}
\,
\scalebox{0.37}{\includegraphics{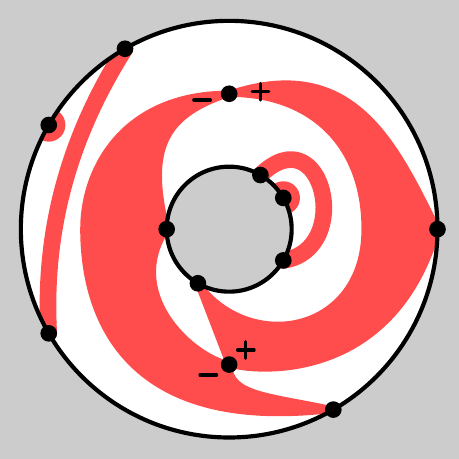}}
\,
\scalebox{0.37}{\includegraphics{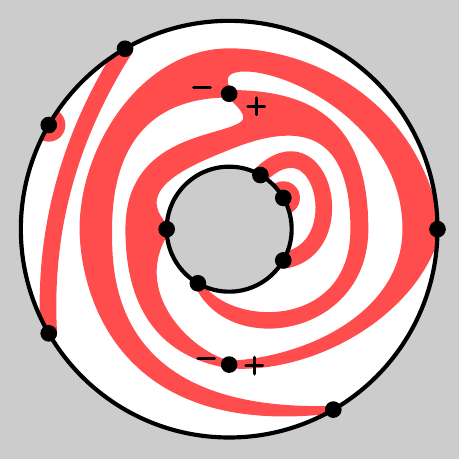}}\\[5pt]
\scalebox{0.37}{\includegraphics{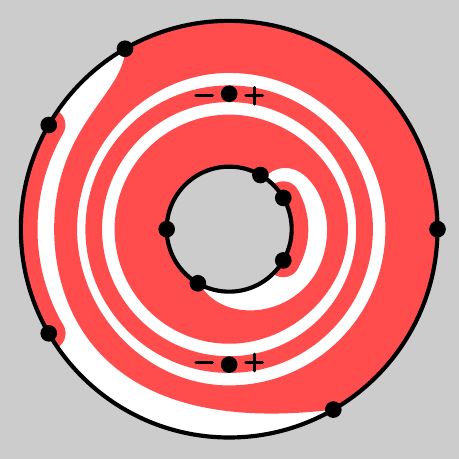}}
\,
\scalebox{0.37}{\includegraphics{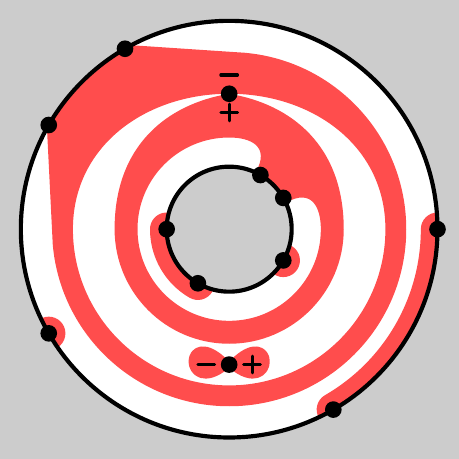}}
\,
\scalebox{0.37}{\includegraphics{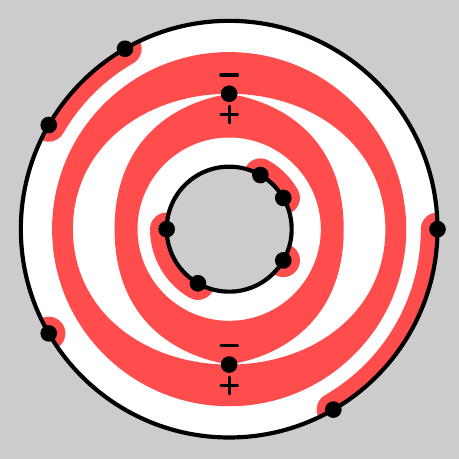}}
\,
\scalebox{0.37}{\includegraphics{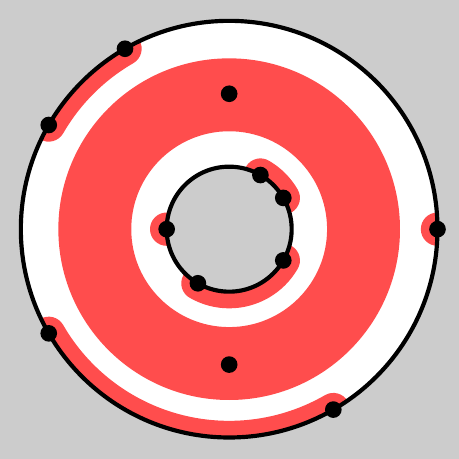}}\\[5pt]
\scalebox{0.37}{\includegraphics{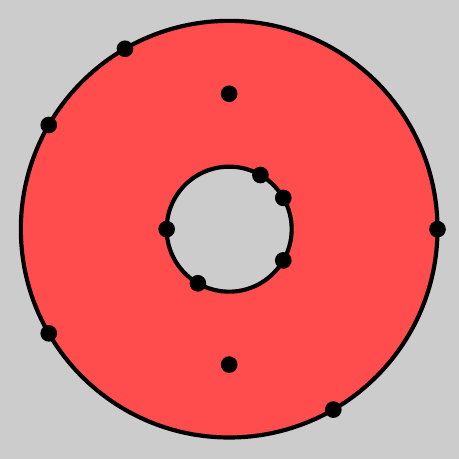}}
\,
\scalebox{0.37}{\includegraphics{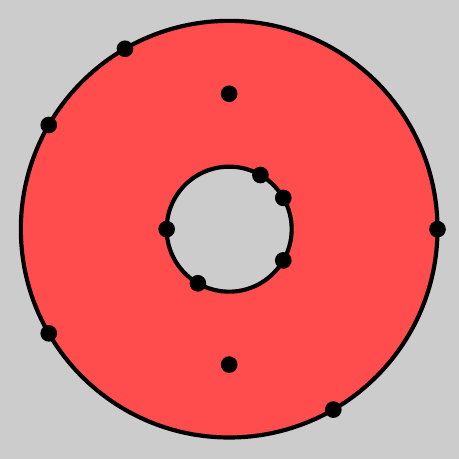}}
\,
\scalebox{0.37}{\includegraphics{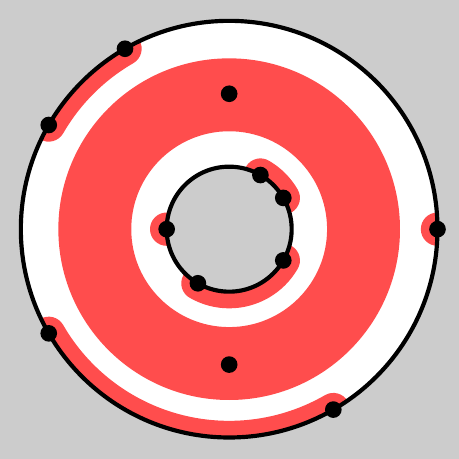}}
\,
\scalebox{0.37}{\includegraphics{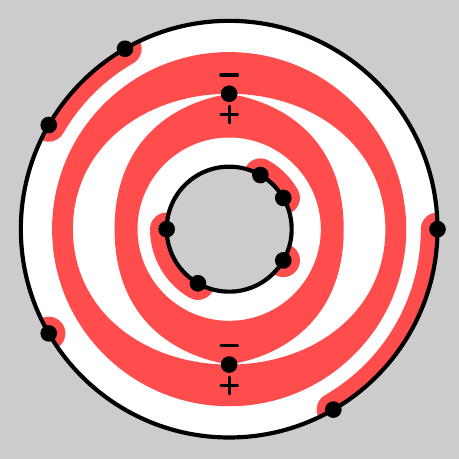}}
\caption{Covers associated to the simple symmetric connectors/pairs of connectors in Figure~\protect\ref{simp pair}}
\label{covs}
\end{figure}

\begin{example}\label{simp cov ex}
Figure~\ref{simp pair} shows some simple symmetric connectors and simple symmetric pairs of connectors.
For each picture in Figure~\ref{simp pair}, the corresponding picture in Figure~\ref{covs} shows the augmentation.  
In light of Proposition~\ref{sym covers}, corresponding pictures in Figures~\ref{simp pair} and~\ref{covs} represent cover relations.
We emphasize that two different simple symmetric pair of connectors for $\P$ can produce the same augmentation of $\P$ and thus the same cover relation.
\end{example}

The following lemma holds by the same proof as Lemma~\ref{alpha does it}.

\begin{lemma}\label{sym alpha does it}
Suppose $\P$ is a noncrossing partition of $(\S^\pm,\B,\D^\pm,\phi)$ and suppose~$\alpha$ is a simple symmetric connector for $\P$ or $\alpha,\phi(\alpha)$ is a simple symmetric pair of connectors for $\P$.
If $\R$ is a noncrossing partition of $(\S^\pm,\B,\D^\pm,\phi)$ with $\P\le\R$ and $\alpha,\phi(\alpha)\in\curve(\R)$, then $\P\cup\alpha\cup\phi(\alpha)\le\R$.
\end{lemma}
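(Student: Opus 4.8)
The plan is to follow the proof of Lemma~\ref{alpha does it} essentially verbatim, inserting the bookkeeping needed for symmetry and for the extra disks that Definition~\ref{sym simp conn} may append. Let $E$ and $E'$ be the blocks of $\P$ (possibly equal) that $\alpha$ connects, so that in the symmetric-pair case $\phi(E)$ and $\phi(E')$ are the blocks connected by $\phi(\alpha)$. Since $\P\le\R$ and $\alpha,\phi(\alpha)\in\curve(\R)$, Proposition~\ref{sym curve set le} lets me fix a single embedding of $\R$ together with embeddings of $\P$, $\alpha$, and $\phi(\alpha)$ in which every block of $\P$ lies inside a block of $\R$ and $\alpha,\phi(\alpha)$ lie inside blocks of $\R$. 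The endpoints of $\alpha$ are marked points contained in $E$ and $E'$, each lying in a unique block of $\R$; since $\alpha\in\curve(\R)$ it lies in a single block $G$ of $\R$, which therefore contains $E$, $E'$, and $\alpha$, hence (an embedding of) the curve union $E\cup\alpha\cup E'$ together with the symmetric thickening used in the augmentation. By $\phi$-equivariance of $\R$, the block $\phi(G)$ likewise contains $\phi(E)$, $\phi(E')$, $\phi(\alpha)$, and $\phi(E)\cup\phi(\alpha)\cup\phi(E')$.

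Next I would dispose of the two cases. If $\alpha=\phi(\alpha)$ is a simple symmetric connector, then $\alpha$ carries a $\phi$-fixed point in its relative interior, so its block $G$ in $\R$ must be a symmetric block, $G=\phi(G)$; hence $G$ contains all of $E$, $E'$, $\phi(E)$, $\phi(E')$ and the (symmetric) curve union. If $\alpha,\phi(\alpha)$ is a simple symmetric pair, then $G$ and $\phi(G)$ contain the two curve unions as above, and the only point left to check is that, whenever Definition~\ref{sym simp conn} appends a disk $\Delta$ to the curve union (the disk carrying a non-double fixed point of $\phi$, or a pair $\set{d_+,d_-}$ of opposite double points), that disk lies inside a block of $\R$: $\partial\Delta$ consists of parts of $\alpha$, $\phi(\alpha)$, and boundary arcs of $E$, $E'$ (and their $\phi$-images), all of which already lie in $G\cup\phi(G)$, and the interior of $\Delta$ meets $\M$ only in $\set{d_+,d_-}$ (if at all). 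Since a ring cannot bound a disk in $\S$ and $\R$ is $\phi$-invariant, a short topological argument then forces $\Delta$ — and hence the entire augmented block, including any appended disks and any blocks glued on because of a shared boundary ring — to lie in a single block of $\R$.

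Finally, exactly as at the end of the proof of Lemma~\ref{alpha does it}, Definition~\ref{sym nc def} forces the block of $\R$ containing the curve union to also contain every block of $\P$ whose boundary has a ring isotopic (by symmetric isotopy) to a boundary ring of the curve union; these are precisely the blocks absorbed when forming $\P\cup\alpha\cup\phi(\alpha)$. Therefore every block of $\P\cup\alpha\cup\phi(\alpha)$ sits inside a block of $\R$, i.e.\ $\P\cup\alpha\cup\phi(\alpha)\le\R$. The one step that genuinely goes beyond transcribing the earlier argument is the middle one: checking that each disk appended in Definition~\ref{sym simp conn} is swallowed by the block of $\R$ carrying the curve union, which is where the $\phi$-invariance of $\R$ and the special structure of those disks (a fixed point or an opposite pair of double points, and no other marked points) are used.
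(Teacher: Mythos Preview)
Your proposal is correct and matches the paper's approach exactly: the paper's entire proof is the single sentence ``The following lemma holds by the same proof as Lemma~\ref{alpha does it},'' and you carry out precisely that, supplying the symmetric bookkeeping and the disk-appending check that the paper leaves implicit. Your middle paragraph (showing the appended disks of Definition~\ref{sym simp conn} are swallowed by the relevant block of $\R$) is more detail than the paper gives, and while the phrase ``a short topological argument'' is a bit breezy, the ingredients you name---connectedness of $\partial\Delta$ forcing it into a single block $G=\phi(G)$, and the fact that any boundary component of $G$ trapped inside $\Delta$ would be a ring bounding a disk with at most one pair of opposite double points, contradicting Proposition~\ref{no ring doub}---are exactly what is needed.
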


The characterization of covers in $\NC_{(\S^\pm,\B,\D^\pm,\phi)}$ follows, and partly reuses, the argument for $\NC_{(\S,\M)}$ (Proposition~\ref{covers}).

\begin{prop}\label{sym covers}
Two noncrossing partitions $\P,\Q\in\NC_{(\S^\pm,\B,\D^\pm,\phi)}$ have $\P\covered\Q$ if and only if there exists a simple symmetric connector $\alpha$ or simple symmetric pair of connectors $\alpha,\phi(\alpha)$ for $\P$ such that $\Q=\P\cup\alpha\cup\phi(\alpha)$.
\end{prop}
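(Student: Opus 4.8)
The plan is to follow the proof of Proposition~\ref{covers} almost verbatim, replacing ordinary isotopy by symmetric isotopy and invoking Lemma~\ref{sym alpha does it} and Propositions~\ref{sym curve set le} and~\ref{sym curve set det} in place of their non-symmetric counterparts, while accounting for three new features: curve sets are closed under $\phi$, a connector may be a symmetric pair, and Definition~\ref{sym simp conn} both adds disk-filling steps to the augmentation and forbids certain configurations.

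For the ``if'' direction, suppose $\alpha$ is a simple symmetric connector for $\P$ (so $\alpha=\phi(\alpha)$) or $\alpha,\phi(\alpha)$ is a simple symmetric pair of connectors, and put $\Q=\P\cup\alpha\cup\phi(\alpha)$. Since $\alpha$ lies in no block of $\P$, we have $\curve(\P)\subsetneq\curve(\Q)$, hence $\P<\Q$. Given $\R$ with $\P<\R\le\Q$, it suffices by Lemma~\ref{sym alpha does it} to show $\alpha\in\curve(\R)$, because $\curve(\R)$ is $\phi$-closed so that $\phi(\alpha)\in\curve(\R)$ then follows. Choose, by Proposition~\ref{sym curve set det}, a curve $\beta\in\curve(\R)\setminus\curve(\P)$, and run the case analysis of Proposition~\ref{covers}: if $\beta$ follows (up to symmetric isotopy) the part of $\alpha$ or of $\phi(\alpha)$ outside the two connected blocks, then $\alpha\in\curve(\R)$; otherwise $\beta$ threads an annulus adjoined in forming a curve union or in merging it with a block sharing a boundary ring, and the constructions of Figures~\ref{tworings fig} and~\ref{gfig}, carried out symmetrically according to whether the connected blocks coincide, produce $\alpha$ inside blocks of $\R$. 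The only genuinely new sub-case, occurring for a symmetric pair, is that $\beta$ enters one of the disks appended in the disk-filling steps (Figure~\ref{diskfill fig}); but such a disk meets the rest of $\Q$ only along $\alpha$, $\phi(\alpha)$, and curves already present in $E$ and $E'$, so following $\beta$ out of and back into a block and using the half-turn symmetry about the enclosed fixed point or double-point pair again exhibits $\alpha$ (or $\phi(\alpha)$, hence $\alpha$) in $\curve(\R)$. Thus $\R=\Q$ and $\P\covered\Q$.

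For the ``only if'' direction, suppose $\P\covered\Q$ and choose $\beta\in\curve(\Q)\setminus\curve(\P)$, together with embeddings in which every block of $\P$ sits in a block of $\Q$ and $\beta$ sits in a block of $\Q$. As in Proposition~\ref{covers}, let $E$ be the block of $\P$ at one end of $\beta$, let $E'$ be the first block of $\P$ that $\beta$ re-enters, after leaving $E$, with a non-removable intersection, and let $\alpha$ follow $\beta$ from $E$ into $E'$ and then remain in $E'$, ending at a marked point of $E'$; note that $\alpha$ is then isotopic into the block of $\Q$ containing $\beta$, so $\alpha\in\curve(\Q)$. If $\beta$ is a symmetric arc, take $\alpha$ symmetric, obtaining a simple symmetric connector; otherwise $\beta,\phi(\beta)$ is an arc pair, and $\alpha$ together with $\phi(\alpha)$ (built from $\phi(\beta)$) is a symmetric pair of arcs, each a simple connector, hence a simple symmetric pair of connectors. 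One must verify that $\alpha$, or $\alpha$ and $\phi(\alpha)$, is not one of the configurations excluded in Definition~\ref{sym simp conn}; granting this, the ``if'' direction gives $\P\covered\P\cup\alpha\cup\phi(\alpha)$, Lemma~\ref{sym alpha does it} gives $\P\cup\alpha\cup\phi(\alpha)\le\Q$, and $\P\covered\Q$ forces $\Q=\P\cup\alpha\cup\phi(\alpha)$.

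I expect this verification to be the main obstacle, and the one place where the argument goes beyond the non-symmetric case in a non-routine way. Because $\alpha\in\curve(\Q)$, a forbidden configuration --- $\alpha$ (or $\alpha$ and $\phi(\alpha)$) bounding, together with curves of $E$ and $E'$, a disk in $\S$ containing a pair of opposite double points that are trivial blocks of $\P$, or an annulus containing only non-double fixed points of $\phi$ --- would force a block of $\Q$ to run along $\alpha$ and $\phi(\alpha)$ so as to create a boundary component which, by Definition~\ref{sym ring def} and Proposition~\ref{no ring doub}, is not a ring of $\S^\pm$, contradicting that $\Q$ is a noncrossing partition. Converting this observation into a choice of $\beta$ whose extracted connector avoids every excluded configuration --- for instance by choosing $\beta$ to minimize a suitable measure of how it meets the blocks of $\P$ and showing a forbidden extracted connector would yield a less complex new curve --- is the crux. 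The disk-filling cases in the ``if'' direction are, by comparison, only a bookkeeping complication.
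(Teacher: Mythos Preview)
Your ``if'' direction is essentially the paper's argument, and your attention to the disk-filling step is reasonable, though the paper absorbs it into the remark that the exclusions in Definition~\ref{sym simp conn} guarantee the thickened unions are bounded by rings.

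The genuine gap is in your ``only if'' direction, precisely at the place you flag as the main obstacle. Your proposed verification---that an excluded $\alpha$ would force a block of $\Q$ to have a boundary component that is not a ring---does not work. Nothing prevents the block $F$ of $\Q$ containing $\alpha$ from being strictly larger than the region cut out by $E$, $E'$, $\alpha$, and $\phi(\alpha)$. Indeed, in the excluded case where $\alpha$ and $\phi(\alpha)$ together with blocks of $\P$ bound a disk containing a pair $d_+,d_-$ of opposite double points that are trivial in $\P$, the block $F$ will simply contain $d_+$ and $d_-$ in its interior (this is forced by Proposition~\ref{no ring doub}), and $\Q$ remains a perfectly good noncrossing partition. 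So the contradiction you seek does not materialize, and your fallback of ``minimizing a suitable complexity measure on $\beta$'' is not developed enough to repair this.

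The paper takes a different route: it does not attempt to show that the extracted $\alpha$ avoids the excluded configurations. Instead, when $\alpha$ (and $\phi(\alpha)$) \emph{is} excluded, it explicitly constructs a replacement curve $\gamma\in\curve(\Q)$ that \emph{is} a legitimate simple symmetric connector or half of a simple symmetric pair. In the bounded-disk-with-double-points case, $\gamma$ follows $\alpha$ out of $E$ but then heads to $d_+$ rather than continuing to $E'$; in the annulus-with-two-non-double-fixed-points case, $\gamma$ is a symmetric arc from $E$ through the annulus and back to $E$. One then concludes $\Q=\P\cup\gamma\cup\phi(\gamma)$ from Lemma~\ref{sym alpha does it} and the already-proved ``if'' direction. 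The paper even remarks parenthetically that, having $\gamma$ in hand, one could then deduce a posteriori that $\alpha$ was never excluded (since $\alpha\notin\curve(\P\cup\gamma\cup\phi(\gamma))$ would contradict $\alpha\in\curve(\Q)$)---but this indirect route through $\gamma$ is essential, and is exactly what your direct boundary-ring argument is missing.
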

\begin{proof}
Suppose $\alpha$ is a simple symmetric connector or $\alpha,\phi(\alpha)$ is a simple symmetric pair of connectors such that $\Q=\P\cup\alpha\cup\phi(\alpha)$.
Then $\P<\Q$, and we will show that $\P\covered\Q$ by showing that any noncrossing partition $\R$ with $\P<\R\le\Q$ has $\R=\Q$.
In light of Proposition~\ref{sym alpha does it}, it is enough to show that $\alpha,\phi(\alpha)\in\curve(\R)$, and since $\curve(\R)$ is closed under the action of $\phi$, it is enough to show that $\alpha\in\curve(\R)$ or $\phi(\alpha)\in\curve(\R)$.

Consider a curve $\beta$ in $\curve(\R)\setminus\curve(\P)$.
If $\beta$ follows along the part of $\alpha$ not in a block of $\P$, then certainly $\alpha\in\curve(\R)$, and if $\beta$ follows $\phi(\alpha)$, then $\phi(\alpha)\in\curve(\R)$.
If $\beta$ follows neither $\alpha$ nor $\phi(\alpha)$, then we argue as in the proof of Proposition~\ref{covers} (using Proposition~\ref{sym curve set det} in place of Proposition~\ref{curve set det}) that $\alpha$ or $\phi(\alpha)$ is in $\curve(\R)$.
We can use this argument because the exclusions in Definition~\ref{sym simp conn} ensure that, in the construction of the augmentation of $\P$ along $\alpha$ and $\phi(\alpha)$, the (thickened) unions of blocks and curves are bounded by \emph{rings}, and then blocks are further combined along common rings.

Conversely, if $\P\covered\Q$, then Proposition~\ref{sym curve set det} says in particular that there exists a curve $\beta\in\curve(\Q)\setminus\curve(\P)$.
Arguing as in the proof of Proposition~\ref{covers}, we use~$\beta$ to construct an arc or boundary segment $\alpha\in\curve(\Q)$ that has no 
symmetric-isotopy
representative inside a single block of $\P$ but has 
a symmetric-isotopy 
representative that starts inside a block $E$ of $\P$.
The curve(s) $\alpha$ (and~$\phi(\alpha)$) satisfy all the requirements to be a simple symmetric connector (or simple symmetric pair of connectors) as long as they are not explicitly ruled out in Definition~\ref{sym simp conn}.
If $\alpha$ (and~$\phi(\alpha)$) are not ruled out in Definition~\ref{sym simp conn}, then  by the direction of this proposition already proved and by Lemma~\ref{sym alpha does it}, $\P\covered\P\cup\alpha\cup\phi(\alpha)\le\Q$, so $\Q=\P\cup\alpha\cup\phi(\alpha)$.

In the cases where $\alpha$ (and $\phi(\alpha)$) are ruled out in Definition~\ref{sym simp conn}, we will construct a simple connector $\gamma$ for $\P$ with $\gamma\in\curve(\Q)$, and conclude similarly that $\Q=\P\cup\gamma\cup\phi(\gamma)$.
(In fact, if we wished to argue further, we could show that $\alpha\not\in\curve(P\cup\gamma\cup\phi(\gamma))$, and by that contradiction conclude that $\alpha,\phi(\alpha)$ is not ruled out in Definition~\ref{sym simp conn}.)
It may be helpful to refer to Figure~\ref{simpexclude fig}.

First, suppose $\alpha$ connects blocks $E$ and $E'$ of $\P$ and suppose that $\alpha$ and $\phi(\alpha)$ combine with blocks of $\P$ to bound a disk containing a pair of opposite double points.
(We argue together the cases where $\alpha$ is a symmetric arc or a symmetric pair.) 
Necessarily, $E'=\phi(E)$ so that $\phi(\alpha)$ also connects $E$ to $E'$.
The sets $E$, $E'$, $\alpha$ and $\phi(\alpha)$ are contained in the same block $F$ of $\Q$.
If a pair $d_+$ and $d_-$ of double points in the disk is not in $F$, then it is separated from $F$ by a closed curve that bounds a disk containing double points.
But by Proposition~\ref{no ring doub}, such a closed curve is not a ring, so we see that $d_+$ and $d_-$ are in $F$.
There is a simple symmetric pair of connectors $\gamma,\phi(\gamma)$ such that $\gamma$ follows $\alpha$ from its endpoint in $E$ to the boundary of $E$ but then goes to $d_+$ instead of continuing with $\alpha$.
Furthermore, $\gamma\in\curve(\Q)$ as desired.

Next, suppose $\alpha$ connects blocks $E$ and $E'$ and that the union of $E$ and $E'$ and a thickened $\alpha$ has a component in its boundary that bounds (together with its image under $\phi$) an annulus $A$ containing $2$ fixed points of $\phi$ that are not double points.
The other boundary circle of $A$ is a component of the boundary of the union of $\phi(E)$ and $\phi(E')$ and a thickened $\phi(\alpha)$.
Neither boundary of $A$ is a ring, so since $E$, $E'$, $\alpha$, and $\phi(\alpha)$ are all contained in blocks of $\Q$, the annulus $A$ is also contained in $\Q$.
Let $\gamma$ be a symmetric arc starting in $E$, passing through $A$ (necessarily through a fixed point of $\phi$, and then ending in $E$.
In particular, $\gamma\in\curve(\Q)$.
We can show that $\gamma$ is a simple symmetric connector for $\P$ by verifying that $\gamma$ does not combine with blocks of $\P$ to bound a disk containing double points.
But if so, then $\alpha$ (and possibly $\phi(\alpha)$) goes through that disk, breaking it into smaller disks.
Since $A$ contains no double points, $\alpha$ combines with blocks of $\P$ to bound a disk with double points, and we are in the previous case, so we are done.
Otherwise, $\gamma$ is a simple symmetric connector for $\P$, and $\gamma\in\curve(\Q)$.

We have shown in every case that we can augment $\P$ along a simple symmetric connector or a simple symmetric pair of connectors to obtain $\Q$.
\end{proof}

The proof of Theorem~\ref{sym graded} is similar to the proof of Theorem~\ref{graded}, but has more cases.

\begin{proof}[Proof of Theorem~\ref{sym graded}]
The formula for the rank function is correct when $\P$ is the bottom element, and we will verify that going up by a cover relation adds $1$ to $b_1^\phi(\P)-b_0^\phi(\P)$.
The construction of the augmentation first combines blocks of $\P$ along a simple symmetric connector or simple symmetric pair of connectors, and then inserts annuli to fulfill the requirement that the same ring may not appear twice as a component of boundaries of blocks.
We will first show that the initial combination of blocks adds $1$ to $b_1^\phi(\P)-b_0^\phi(\P)$.
We will then show that the subsequent insertion of annuli leaves $b_1^\phi(\P)-b_0^\phi(\P)$ unchanged.

Given a simple symmetric connector $\alpha$ or simple symmetric pair of connectors $\alpha,\phi(\alpha)$, with $\alpha$ connecting $E$ to $E'$, we break into cases defined by whether $\alpha$ is symmetric or a pair and by coincidences among the blocks $E$, $E'$, $\phi(E)$, and $\phi(E')$.
We check all cases up to the symmetry of swapping $E$ and $E'$ and/or swapping $E$ and $\phi(E)$.
In each case, we show that $b_1^\phi(\P)-b_0^\phi(\P)$ increases by $1$ when blocks of~$\P$ are joined using $\alpha$ and $\phi(\alpha)$.

First, suppose $\alpha$ is a symmetric simple connector.  
These cases are illustrated in Figure~\ref{symrank fig}.
(In this figure and in Figure~\ref{symrank fig 2}, the surface is an annulus with one double point and one non-double fixed point, except as specified in Cases~7 and ~8.)
In this case, $E'=\phi(E)$.

\noindent
\textbf{Case 1.}
If $\phi(E)\neq E$, then joining $E$ and $E'$ along $\alpha$ decreases $b_0^\phi$ by $1$ while leaving $b_1^\phi$ unchanged. 

\noindent
\textbf{Case 2.}
If $\phi(E)=E$, then joining $E$ and $E'$ along $\alpha$ leaves $b_0^\phi$ unchanged, while increasing $b_1^\phi$ by $1$.
(A new non-bounding circle is obtained by taking a path in $E$ connecting the points where $\alpha$ enters and leaves $E$ and concatenating it with the part of $\alpha$ that is outside of $E$.
Since the part of $\alpha$ outside of $E$ contains a fixed point of $\phi$, the action of $\phi$ either reverses the orientation of this circle or sends it to a distinct circle, depending on the path chosen in~$E$.)

\begin{figure}
\begin{tabular}{cc}
\scalebox{0.5}{\includegraphics{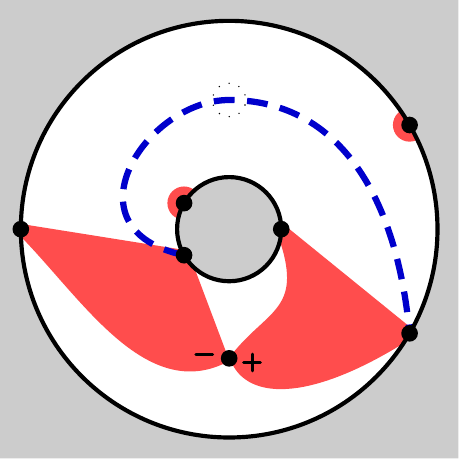}}&
\scalebox{0.5}{\includegraphics{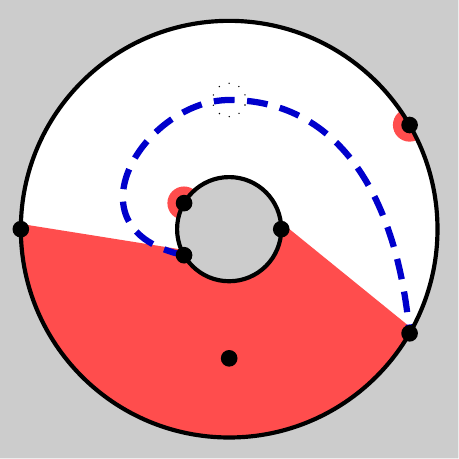}}\\
Case 1&Case 2
\end{tabular}
\caption{Illustrations for the proof of Theorem~\ref{sym graded}}
\label{symrank fig}
\end{figure}

Next, suppose $\alpha,\phi(\alpha)$ is a simple symmetric pair of connectors.
These cases are illustrated in Figure~\ref{symrank fig 2}.

\noindent
\textbf{Case 3.}
If $E$, $E'$, $\phi(E)$ and $\phi(E')$ are all distinct, then connecting $E$ to $E'$ by $\alpha$ and $\phi(E)$ to $\phi(E')$ by $\phi(\alpha)$ decreases $b_0^\phi$ by $1$ and leaves $b_1^\phi$ unchanged.

\noindent
\textbf{Case 4.}
If $E=E'$ but $E\neq\phi(E)=\phi(E')$, then $b_0^\phi$ is unchanged, while $b_1^\phi$ increases by~$1$.

\noindent
\textbf{Case 5.}
If $E=\phi(E)$ but $E$, $E'$ and $\phi(E')$ are all distinct, then $b_0^\phi$ decreases by $1$ and $b_1^\phi$ is unchanged.  

\noindent
\textbf{Case 6.}
If $E=\phi(E)$ but $E\neq E'=\phi(E')$, then $b_0^\phi$ is unchanged, while $b_1^\phi$ increases by $1$.
(A new non-bounding circle passes through $E$, $\alpha$, $E'$, and $\phi(\alpha)$, back to $E$, and $\phi$ reverses the orientation of this circle or sends it to a distinct circle.)

\noindent
\textbf{Case 7.}
If $E=\phi(E')$ but $E\neq\phi(E)=E'$, then $b_0^\phi$ decreases by $1$ and $b_1^\phi$ is unchanged.
There are two subcases here, and two different reasons that $b_1^\phi$ is unchanged.

\noindent
\textit{Case 7a.}
In this case, $\alpha$ and $\phi(\alpha)$, together with curves in $E$ and $E'$ bound a disk containing double points $d_+,d_-$ with $d_+$ in $E$ and $d_-$ in $E'$.
(This case is mentioned specifically in Definition~\ref{sym simp conn} and illustrated in Figure~\ref{diskfill fig}.)
Thus there are no new non-bounding circles created when $E$ and $E'$ are combined along $\alpha$ and $\phi(\alpha)$.

\noindent
\textit{Case 7b.}
In this case, $\alpha$ and $\phi(\alpha)$, together with curves in $E$ and $E'$ form a new non-bounding circle.
However, this circle can be chosen so that $\phi$ maps it to itself, with the same orientation, so it does not contribute to $b_1^\phi$.
The picture of this case in Figure~\ref{symrank fig 2} shows an annulus with a disk removed, no double points, and one $\phi$-fixed point.
The action of $\phi$ fixes the removed disk and rotates its boundary by a half-turn.

\noindent
\textbf{Case 8.}
If $E=E'=\phi(E)=\phi(E')$, then $b_0^\phi$ is unchanged, while $b_1^\phi$ increases by~$1$.
For reasons analogous to Case 7, there are two subcases and two reasons that $b_1^\phi$ increases by~$1$.

\noindent
\textit{Case 8a.}
In this case, $\alpha$ and $\phi(\alpha)$, together with two curves in $E$ bound a disk containing double points $d_+,d_-$ with $d_+$ and $d_-$ at two different locations on the boundary of $E$.
(This case is mentioned specifically in Definition~\ref{sym simp conn}.)
There is one new non-bounding circle created because $E$ is connected to itself along $\alpha$ and $\phi(\alpha)$, and this circle can be chosen so that $\phi$ reverses its orientation.

\noindent
\textit{Case 8b.}
In this case, $\alpha$ and $\phi(\alpha)$ combine with $E$ to increase the dimension of $H_1$ by two.  
Two new independent non-bounding circles can be constructed as a symmetric pair (one following $\alpha$ and one following $\phi(\alpha)$), which together contribute~$1$ to $b_1^\phi$.
The picture of this case in Figure~\ref{symrank fig 2} shows the same surface as Case~7b.

\begin{figure}
\begin{tabular}{ccc}
\scalebox{0.5}{\includegraphics{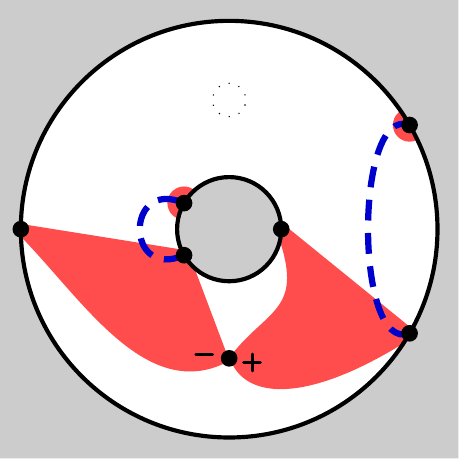}}&
\scalebox{0.5}{\includegraphics{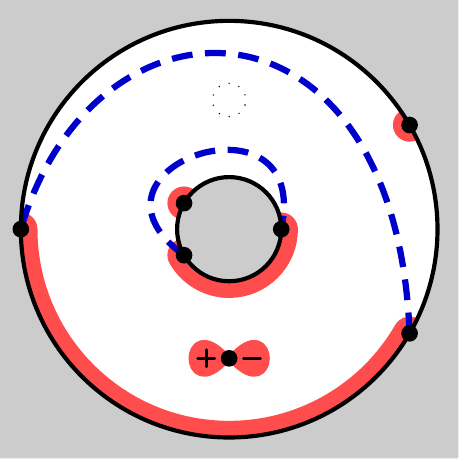}}&
\scalebox{0.5}{\includegraphics{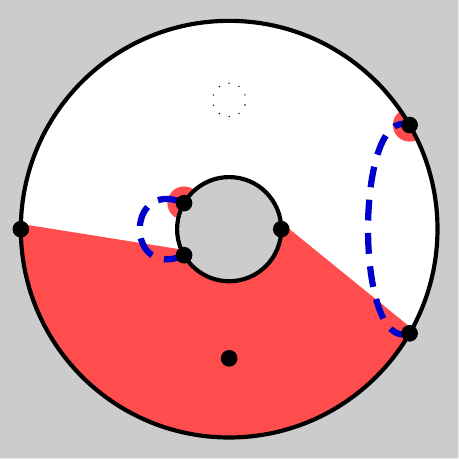}}\\
Case 3&Case4&Case 5\\[5pt]
\scalebox{0.5}{\includegraphics{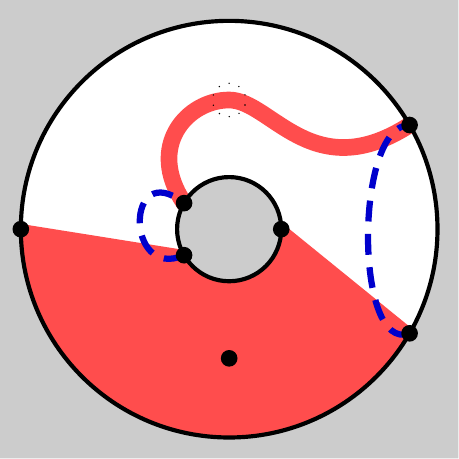}}&
\scalebox{0.5}{\includegraphics{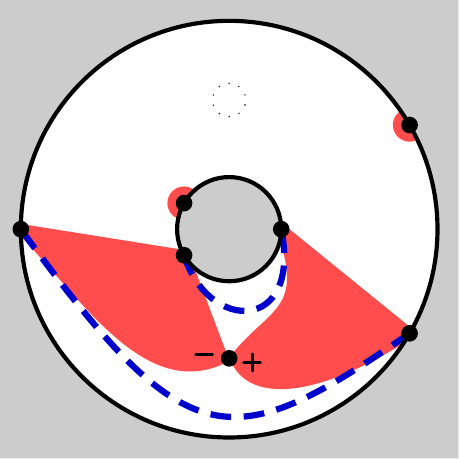}}&
\scalebox{0.5}{\includegraphics{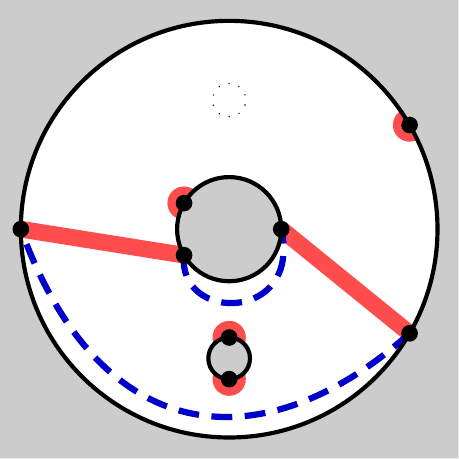}}\\
Case 6&Case 7a&Case 7b\\[5pt]
\end{tabular}
\begin{tabular}{cc}
\scalebox{0.5}{\includegraphics{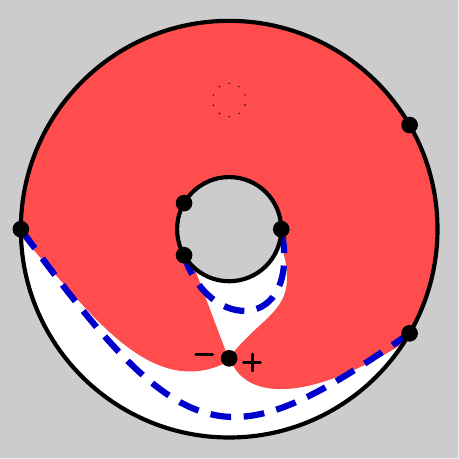}}&
\scalebox{0.5}{\includegraphics{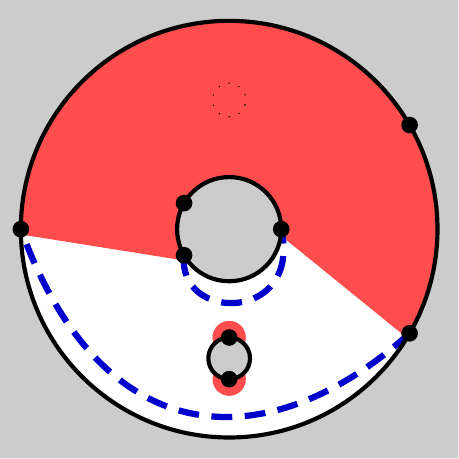}}\\
Case 8a&Case 8b\\
\end{tabular}
\caption{More illustrations for the proof of Theorem~\ref{sym graded}}
\label{symrank fig 2}
\end{figure}

The restrictions in Definition~\ref{sym simp conn} ensure that the result of joining $E$, $E'$, $\phi(E)$ and $\phi(E')$ along $\alpha$ and $\phi(\alpha)$ is a disjoint collection of subsets that satisfy the definition of embedded blocks, except that the subset containing $E$ and $E'$ may fail to be a block because the same ring may appear more than once as a component of its boundary.

The last step in the construction of $\P\cup\alpha\cup\phi(\alpha)$ is to combine blocks in the collection whose boundaries contain the same ring (either combining different blocks or combining blocks to themselves).
Suppose $F$ and $F'$ are blocks whose boundaries contain the same ring $U$.
The $\phi$-symmetry of the construction means that $\phi(F)$ and $\phi(F')$ both contain the ring $\phi(U)$.
(It is impossible to have $\phi(U)$ isotopic to $U$, because symmetric rings cannot form part of the boundary of an embedded block, and because a symmetric pair of rings may not bound an annulus in $\S$ containing no double points.)
We consider all possibilities of coincidences among the blocks $F$, $F'$, $\phi(F)$, and $\phi(F')$, showing in every case that $b_1^\phi(\P)-b_0^\phi(\P)$ is unchanged when all the blocks are joined.
(We check all cases up to swapping $F$ and $F'$ and/or swapping $F$ and $\phi(F)$.)
The cases are illustrated in Figure~\ref{symrank fig 3}.
The pictures in the figure don't represent a specific surface $\S$, but rather represent parts of the blocks $F$, $F'$, $\phi(F)$, and $\phi(F')$ near $U$ and $\phi(U)$.
Black boundary lines for the blocks are shown near $U$ and $\phi(U)$ to clearly show the added annuli, but away from the added annuli, boundary lines are omitted, to emphasize that the picture omits topological features, boundary segments, etc.
All coincidences between the blocks are shown by connections between the blocks in the picture.
Black boundary lines are also used in Case~d to show that the two blocks shown do not coincide.
In each case, the symmetry $\phi$ acts as a half-turn rotation about the center of the figure.

\begin{figure}
\begin{tabular}{ccc}
\scalebox{0.5}{\includegraphics{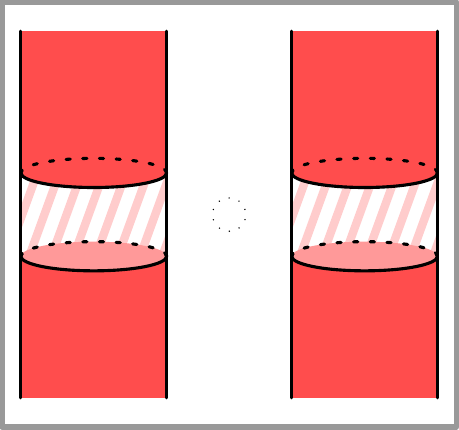}}&
\scalebox{0.5}{\includegraphics{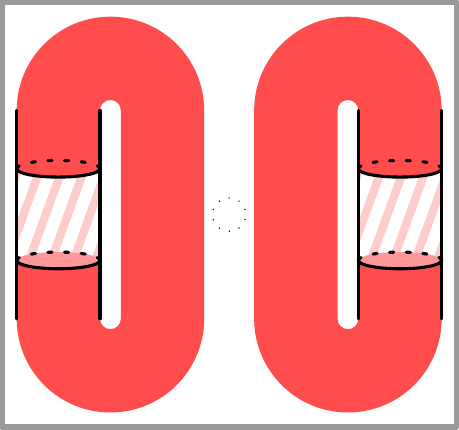}}&
\scalebox{0.5}{\includegraphics{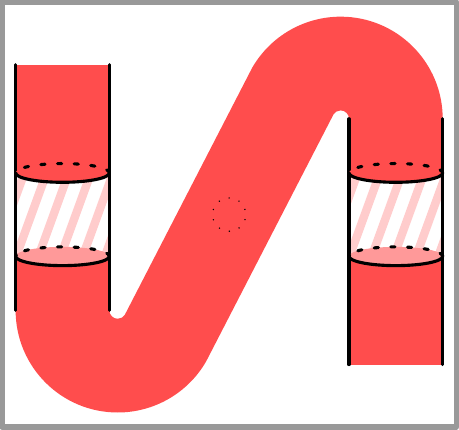}}
\\
Case a&Case b&Case c\\[5pt]
\scalebox{0.5}{\includegraphics{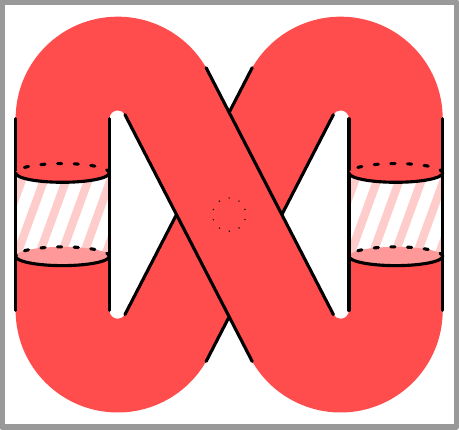}}&
\scalebox{0.5}{\includegraphics{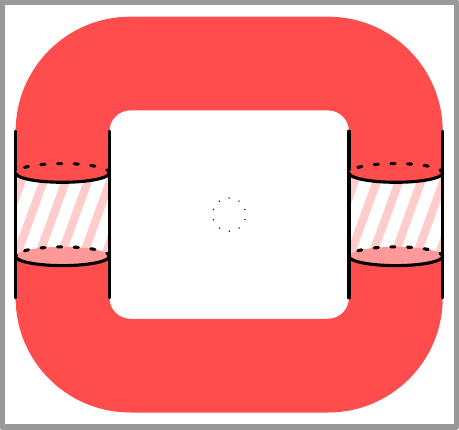}}&
\scalebox{0.5}{\includegraphics{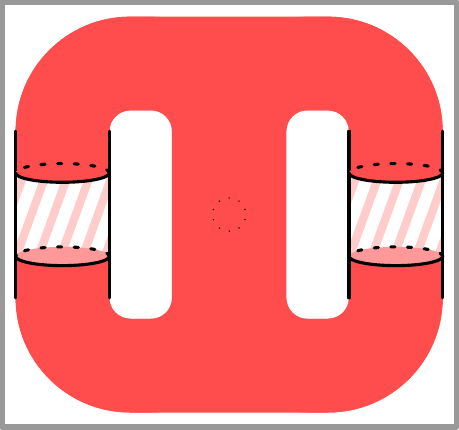}}
\\
Case d&Case e&Case f
\end{tabular}
\caption{Still more illustrations for the proof of Theorem~\ref{sym graded}}
\label{symrank fig 3}
\end{figure}

\noindent
\textbf{Case a.}
If $F$, $F'$, $\phi(F)$, and $\phi(F')$ are all distinct, then $b_0^\phi$ decreases by $1$ and $b_1^\phi$ decreases by $1$.
(The symmetric pair of rings $U,\phi(U)$ counts once after the joining rather than twice.)

\noindent
\textbf{Case b.}
If $F=F'$ but $F\neq\phi(F)=\phi(F')$, then $b_0^\phi$ and $b_1^\phi$ are unchanged.
(The symmetric pair $U,\phi(U)$ counts once rather than twice, but a new symmetric pair of non-bounding circles is created, one circle crossing each added annulus.) 

\noindent
\textbf{Case c.}
If $F=\phi(F)$ but $F$, $F'$, and $\phi(F')$ are all distinct, then $b_0^\phi$ decreases by $1$ and $b_1^\phi$ decreases by $1$.
(Again, $U,\phi(U)$ counts only once.)

\noindent
\textbf{Case d.}
If $F=\phi(F)$ but $F\neq F'=\phi(F')$, then $b_0^\phi$ and $b_1^\phi$ are unchanged. 
(Again, $U,\phi(U)$ counts only once.
A new non-bounding circle is formed, crossing both added annuli, and $\phi$ reverses its orientation or sends it to a different circle.)

\noindent
\textbf{Case e.}
If $F=\phi(F')$ but $F\neq\phi(F)=F'$, then $b_0^\phi$ and $b_1^\phi$ both decrease by $1$.
(Again, $U,\phi(U)$ counts only once.
A new non-bounding circle is formed, crossing both added annuli, but this circle can be chosen so that $\phi$ maps it to itself, with the same orientation.)  

\noindent
\textbf{Case f.}
If $F=F'=\phi(F)=\phi(F')$, then $b_0^\phi$ and $b_1^\phi$ are unchanged. 
(Again, $U,\phi(U)$ counts only once, but there is a new symmetric pair $C,\phi(C)$ of non-bounding circles, with $C$ crossing the annulus associated to $U$ and not crossing the annulus associated to $\phi(U)$.)
%
\end{proof}

\section{Noncrossing partitions of classical finite and affine types}\label{ex sec}
In this section, we revisit the examples that motivated the constructions in this paper, namely noncrossing partitions of finite types A, B, and D and affine types $\afftype{A}$, $\afftype{B}$, $\afftype{C}$, and $\afftype{D}$.
In a general Coxeter group, a \newword{poset of noncrossing partitions} is the interval $[1,c]_T$, where $1$ is the identity element, $c$ is a special kind of element called a \newword{Coxeter element}, and the subscript $_T$ refers to a particular partial order $\le_T$ on the elements of the group.
Details on the definition are found in many references, including in \cite{affncA} which begins the treatment of planar models for classical affine types.

\begin{remark}\label{clus rem}
We will see, in these examples, some parallels between noncrossing partitions of marked surfaces and the \emph{triangulations} of marked surfaces that model the combinatorics of certain cluster algebras \cite{cats1,cats2} as well as the related triangulations of marked orbifolds that appear in \cite{FeShTu12a,FeShTu12b,FeTu}.
As is well known, noncrossing partitions and cluster algebras of finite type A are modeled on a disk, while the type-B models live on a disk with a central symmetry or equivalently on the quotient disk (a disk with one degree-$2$ orbifold point).
The analogy continues, so that in type $\afftype{A}$, both cluster algebras and noncrossing partitions are modeled on an annulus.
Furthermore, the type $\afftype{C}$ noncrossing partition models live on a symmetric annulus or its quotient, a disk with two degree-$2$ orbifold points, while the cluster algebra models also live on the quotient.

The planar model for cluster combinatorics in type D$_n$ features a disk with $n$ boundary marked points and one puncture.
The model for noncrossing partitions features a disk with a central symmetry, $2n$ boundary marked points, and a pair of double points.
Similarly, type-$\afftype{D}$ cluster combinatorics takes place on a disk with two punctures, while type-$\afftype{D}$ noncrossing partitions naturally live on an annulus with an involutive symmetry and two pairs of double points.
In the finite or affine type-D or type-$\afftype{D}$ noncrossing partition models, it would seem natural to pass to the quotient modulo the symmetry and at the same time turn the double points into punctures, thus obtaining a disk with one or two punctures.
(Similar considerations apply in type~$\afftype{B}$.)

However, attempts to construct models of noncrossing partitions in these punctured settings have failed to yield natural, simple models, especially in affine type~$\afftype{D}$.
We have come to the conclusion that passing to the quotient and turning double points into punctures is not the natural setting for noncrossing partitions.
(Apparently, just the opposite is true for cluster combinatorics:
The ``tagged triangulations'' of \cite{cats1,cats2} constitute a natural, simple model incorporating punctures.
In finite type D, playing history backwards, one can pass from tagged triangulations of a disk with one puncture to a centrally symmetric model with what we might call ``doubled diameters'' \cite{ga}.
But it seems difficult to replace triangulations of general punctured marked surfaces with any natural, simple model that imposes an involutive symmetry and ``doubles'' something, in exchange for abolishing punctures.)
\end{remark}

\subsection{Type A}\label{A sec}
Let $(\S,\M)$ be the marked surface consisting of a disk $\S$ and a set~$\M$ of $n+1$ points on the boundary of $\S$.
Theorem~\ref{lattice} recovers, in this case, the main result of \cite{Kreweras}, namely that the ``noncrossing partitions of a cycle'' form a lattice.
Every embedded block in $\S$ is a point, curve, or disk, so Theorem~\ref{graded} recovers the known result that the rank function is $n+1$ minus the number of blocks.
Another key result of \cite{Kreweras} is an explicit anti-automorphism of the noncrossing partition lattice, but we have not found such a ``Kreweras complementation'' map for general marked surfaces.  
Biane gave an explicit bijection \cite[Theorem~1]{Biane} between the $\NC_{(\S,\M)}$ and the lattice $[1,c]_T$ in the Coxeter group $S_{n+1}$ (of type A$_n$).

\subsection{Type B}\label{B sec}
Let $\S$ be a disk, let $\phi$ be a two-fold rotational symmetry of $\S$, and let $\B$ be a set of $2n$ boundary points that is permuted by~$\phi$.
We take $\D=\emptyset$, so that $\S=\S^\pm$.
Theorem~\ref{sym graded} recovers part of \cite[Proposition~2]{Reiner}, namely that $\NC_{(\S,\B,\emptyset,\phi)}$ is graded, with rank function given by $n$ minus the number of symmetric pairs of distinct embedded blocks.
(Again, all embedded blocks are points, curves, or disks.)
The remaining assertion of \cite[Proposition~2]{Reiner} is that $\NC_{(\S,\B,\emptyset,\phi)}$ is a lattice.
In fact, the map $\phi$ induces an automorphism of $\NC(\S,\B)$, and $\NC_{(\S,\B,\emptyset,\phi)}$ is the subposet of $\NC(\S,\B)$ induced by noncrossing partitions fixed (up to isotopy) by $\phi$.
It is easy and well known that, given a lattice $L$ and an automorphism $\eta$ of~$L$, the set of fixed points of $\eta$ constitutes a sublattice of $L$, and we recover the fact that $\NC_{(\S,\B,\emptyset,\phi)}$ is a lattice.

General ``folding'' considerations (discussed in \cite[Section~2.3]{affncA}) show that the interval $[1,c]_T$ in a Coxeter group of finite type B$_n$ is the sublattice of the analogous interval in $S_{2n}$ consisting of elements of $S_{2n}$ fixed by a certain involutive automorphism.
The corresponding automorphism of $\NC(\S,\B)$ is induced by $\phi$, so the interval $[1,c]_T$ in type B$_n$ is isomorphic to $\NC_{(\S,\B,\emptyset,\phi)}$, as pointed out in~\cite{Bessis,BGN,Bra-Wa}.

Interestingly, the type-B noncrossing partition lattice can also be constructed as a lattice of noncrossing partitions of an ordinary marked surface.
The following is an observation due to Laura Brestensky in connection with~\cite{BThesis}.
It was also noticed by Alexandersson, Linusson, Potka, and Uhlin \cite[Section~9]{ALPU} or can be obtained by combining \cite[Proposition~7.6]{McSul} with results of \cite[Section~5.2]{affncA}.

\begin{theorem}\label{B orb empty}
The type-B$_n$ noncrossing partition lattice is isomorphic to the noncrossing partition lattice $\NC_{(\A,\M)}$, where $\A$ is an annulus and $\M$ consists of $n$ marked points on one boundary of $\A$ and no marked points on the other boundary.
\end{theorem}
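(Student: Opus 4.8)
The plan is to realize the type-B$_n$ lattice concretely as $\NC_{(\S,\B,\emptyset,\phi)}$ --- a disk $\S$ with $2n$ boundary points $\B$ and half-turn symmetry $\phi$ about the center $c$, as in Section~\ref{B sec} --- and then build an explicit poset isomorphism onto $\NC_{(\A,\M)}$ by ``opening up'' the center $c$ into the empty boundary component of $\A$. Concretely, fix a small $\phi$-invariant open disk $D$ around $c$ with $\overline D\cap\B=\emptyset$, and set $\S'=\S\setminus D$. Then $\phi$ acts freely on $\S'$, the quotient $q\colon\S'\to\A'$ is a connected double covering, and $\A'$ is an annulus carrying $n$ marked points $\M=q(\B)$ on one boundary component and an empty boundary component $q(\partial D)$ on the other; this is a copy of $(\A,\M)$, so it suffices to produce a poset isomorphism $\Psi\colon\NC_{(\A,\M)}\to\NC_{(\S,\B,\emptyset,\phi)}$.

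I would define $\Psi$ in two stages. First, pull back: given $\Q\in\NC_{(\A,\M)}$, replace each block $F$ of $\Q$ by the connected components of $q^{-1}(F)$. Since $q$ is a connected double cover and $F$ is connected, $q^{-1}(F)$ is $\phi$-invariant with either two components (a symmetric pair of blocks, occurring when $F$ is ``unwrapped'' relative to the empty boundary) or a single $\phi$-invariant component --- a connected double cover of $F$ --- occurring when a boundary component of $F$ wraps around the empty boundary, e.g.\ when $F$ is the annulus $\A'$ itself or when $\partial F$ contains a chain of arcs forming a noncontractible loop; one checks against Definitions~\ref{embedded def} and~\ref{sym embedded def} that the pieces are embedded blocks of $\S'$ and that together they form a $\phi$-invariant (equivalently, symmetric) noncrossing partition $q^{*}\Q$ of $(\S',\B)$. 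Second, cap off: at most one block of $q^{*}\Q$ can have $\partial D$ as a boundary component (two such would fail to be disjoint in a collar of $\partial D$), and that block is automatically $\phi$-invariant; glue $D$ back in, absorbing it into that block if present, to obtain $\Psi(\Q)\in\NC_{(\S,\B,\emptyset,\phi)}$. The inverse is the visibly inverse procedure: given $\P\in\NC_{(\S,\B,\emptyset,\phi)}$, the center $c$ lies in at most one block, which is then $\phi$-invariant; excise $D$ from $\S$ (and from that block), then push the resulting $\phi$-invariant partition of $\S'$ down along $q$.

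To finish, I would verify (i) that $\Psi$ and this procedure are mutually inverse, the one nontrivial point being the bookkeeping of the two lifts of a block of $\Q$: a block which wraps once around the empty boundary has as its two lifts precisely the two $\phi$-pairs of ``unwrapped'' blocks available in the disk (for instance the two ways of pairing arcs, $\{\,\overline{p_ip_j},\overline{p_{i+n}p_{j+n}}\,\}$ versus $\{\,\overline{p_ip_{j+n}},\overline{p_{i+n}p_j}\,\}$), so that no noncrossing partition of $\A$ is lost or double counted; and (ii) that $\Psi$ is an isomorphism of posets, which follows because $q^{-1}$ sends a nested family of embedded subsurfaces to a nested family once compatible isotopy representatives are chosen upstairs, and capping off $D$ preserves containment, with the same for the inverse. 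As an independent consistency check one can compare rank functions via Theorems~\ref{graded} and~\ref{sym graded}: in $\NC_{(\A,\M)}$ one has $\rank(\Q)=n+b_1(\Q)-b_0(\Q)$, while in $\NC_{(\S,\B,\emptyset,\phi)}$ one has $\rank(\Psi(\Q))=n+b_1^\phi(\Psi(\Q))-b_0^\phi(\Psi(\Q))$ (using $|\M|=2n$), and the component count built into $\Psi$ matches $b_0\leftrightarrow b_0^\phi$ and $b_1\leftrightarrow b_1^\phi$.

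The main obstacle is the verification, across all the cases, that pulling a block $F$ of $\A$ back along the double cover $q$ really produces embedded block(s) of $\S'$: one must confirm that when $q^{-1}(F)$ is connected, each of its boundary components is a ring, an empty boundary component, or a finite union of arcs/boundary segments of $\S'$ --- rather than a curve excluded because it bounds a monogon or a digon --- and dually that re-inserting $D$ never creates two blocks sharing a boundary ring. Equally delicate, and closely related, is handling the degenerate blocks on both sides and the blocks that touch $\partial D$; these are exactly the places where the distinction between $\S'$ (with its empty boundary component) and the genuine disk $\S$ must be negotiated. Finally, the cases $n\le 1$, where the ``type-B disk'' $\S$ is itself degenerate, should be disposed of directly, both posets then being chains.
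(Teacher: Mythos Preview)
Your approach is essentially the paper's, carried out in the reverse direction and phrased in covering-space rather than orbifold language. The paper passes from the symmetric disk $\S$ to the orbifold quotient $\S/\phi$ (a disk with $n$ boundary points and one degree-$2$ cone point) and then replaces the cone point by an empty boundary circle to obtain~$\A$; you instead excise a $\phi$-invariant disk $D$ about the fixed point first and then take the free quotient $\S'\to\A$. Since $(\S\setminus D)/\phi$ is exactly the blow-up of $\S/\phi$ at its cone point, the two constructions produce the same annulus and the same bijection between noncrossing partitions, with your $\Psi$ being the inverse of the paper's map.

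Two small remarks. First, your sentence in~(i) about ``a block which wraps once around the empty boundary has as its two lifts precisely the two $\phi$-pairs of `unwrapped' blocks'' is phrased confusingly: for any arc $\alpha$ in $\A$ the preimage $q^{-1}(\alpha)$ is always a symmetric pair of arcs in $\S'$; the point you are making is that the two non-isotopic arcs in $\A$ between $q(p_i)$ and $q(p_j)$ (wrapping versus not) lift to the two distinct symmetric pairs $\{\overline{p_ip_j},\overline{p_{i+n}p_{j+n}}\}$ and $\{\overline{p_ip_{j+n}},\overline{p_{i+n}p_j}\}$, which is what gives injectivity. Second, the case $n=1$ is not merely degenerate on the disk side: the disk with two marked points is explicitly excluded from the definition of marked surface, so strictly speaking $\NC_{(\S,\B,\emptyset,\phi)}$ is undefined there, while $\NC_{(\A,\M)}$ is defined; you should simply exclude $n=1$ or treat it by inspection of the type-B$_1$ lattice.
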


The proof of the theorem is simple, and we sketch it here.
The type-B noncrossing partition lattice is isomorphic to $\NC_{(\S,\B,\emptyset,\phi)}$.
The quotient $\S/\phi$ is another disk with $n$ marked points on its boundary and a degree-$2$ orbifold point in its interior.
Symmetric embedded blocks/pairs of embedded blocks in $\S$ become points, curves between marked points, curves between a marked point and the orbifold point, or disks in $\S/\phi$, and the disk blocks may or may not contain the orbifold point.
Replacing the orbifold point with an empty boundary component, we obtain the annulus $\A$ with marked points $\M$ as in the theorem.
Replacing each disk block containing the orbifold point (and each curve connecting a marked point to the orbifold point) with an annular block having the empty boundary component as one of its boundaries, we obtain an isomorphism from $\NC_{(\S,\B,\emptyset,\phi)}$ to $\NC_{(\A,\M)}$.

It is tempting to generalize to a model where orbifold points ``are'' empty boundary components, but this idea does not hold up in general (for example in Section~\ref{C tilde sec}).

\subsection{Type D}\label{D sec}
Let $\S$ be a disk with a two-fold rotational symmetry $\phi$ and a set $\B$ of boundary points permuted by~$\phi$.
Take $\D=\set{d}$, for $d$ the fixed point of $\phi$, so that $\D^\pm=\set{d_+,d_-}$.
Reuse the symbol $\phi$ for the map on $\S^\pm$ that agrees with $\phi$ on $\S\setminus\D$ and sends $d_+$ to $d_-$ and vice versa.
Then $\NC_{(\S^\pm,\B,\D^\pm,\phi)}$ is the type-D noncrossing partition lattice defined in~\cite{Ath-Rei}, which is isomorphic to the interval $[1,c]_T$ in type D$_n$, by \cite[Theorem~1.1]{Ath-Rei}.
Theorem~\ref{sym graded} recovers part of \cite[Proposition~3.1]{Ath-Rei}, which says that $\NC_{(\S^\pm,\B,\D^\pm,\phi)}$ is a graded lattice whose rank function is $n$ minus the number of symmetric pairs of distinct embedded blocks.

\subsection{Type $\afftype{A}$}\label{A tilde sec}
Let $\S$ be an annulus and let $\M$ be a set of $n$ marked points on the boundary of $\S$, with at least one marked point on each component of the boundary.
Embedded blocks in $(\S,\M)$ are points, curves, disks, or annuli.
Theorems~\ref{lattice} and~\ref{graded} specialize to the following theorem, which is a result of \cite{BThesis}.

\begin{theorem}\label{A tilde main}
The poset $\NC_{(\S,\M)}$ of noncrossing partitions of an annulus with marked points on both boundaries, $n$ marked points in all, is a graded lattice, with rank function given by $n$ minus the number of blocks that are not annuli.
\end{theorem}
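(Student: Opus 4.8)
The plan is to obtain the theorem by specializing Theorems~\ref{lattice} and~\ref{graded} to the annulus, using the description of embedded blocks in an annulus recorded just before the statement.

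By Theorem~\ref{lattice}, $\NC_{(\S,\M)}$ is a complete lattice, hence a lattice, and by Theorem~\ref{graded} it is graded with $\rank(\P)=|\M|+b_1(\P)-b_0(\P)$. Since $|\M|=n$ here, $\NC_{(\S,\M)}$ is in particular a graded lattice, and it remains only to prove the numerical identity
\[b_0(\P)-b_1(\P)=\#\{\text{blocks of }\P\text{ that are not annuli}\}\qquad\text{for every }\P\in\NC_{(\S,\M)}.\]

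The key input is that every embedded block of $(\S,\M)$ is a point, an arc or boundary segment, a disk, or an annulus. For a nondegenerate block $E$, it is a compact connected oriented surface with boundary contained in the annulus, so (the annulus being planar) $E$ is planar; thus $E$ is a sphere with $b\ge 1$ open disks removed and $b_1(E)=b-1$. The only essential simple closed curve in the annulus, up to isotopy, is the core, so $(\S,\M)$ has a unique ring up to isotopy; combining this with the prohibition in Definition~\ref{embedded def} against a block having two isotopic rings in its boundary, with the fact that $\S$ has no empty boundary components, and with the fact that arcs cut off neither monogons nor digons, one checks that $b\le 2$. Hence $E$ is a disk ($b=1$, so $b_1(E)=0$) or an annulus ($b=2$, so $b_1(E)=1$); together with the degenerate possibilities this gives the classification, and the non-annulus blocks of $\P$ are precisely the blocks $E$ with $b_1(E)=0$.

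To finish, I would use additivity of Betti numbers over the disjoint union of blocks: $b_0(\P)=\sum_E b_0(E)$ and $b_1(\P)=\sum_E b_1(E)$, where in the second sum the core circle of an annular block $E$ genuinely contributes because any disk lying in $\P$ is contained in a single block and the core does not bound a disk in $E$. Therefore each non-annulus block contributes $1$ to $b_0(\P)-b_1(\P)$ and each annular block contributes $0$, which is exactly the displayed identity; substituting into the rank formula of Theorem~\ref{graded} gives $\rank(\P)=n-\#\{\text{blocks of }\P\text{ that are not annuli}\}$.

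I expect the only real obstacle to be the block classification: namely, verifying carefully that the definition of embedded block excludes subsurfaces of the annulus with three or more boundary components (``pairs of pants'' and more complicated surfaces), so that no block can have $b_1\ge 2$. Everything else is a direct specialization of Theorems~\ref{lattice} and~\ref{graded} together with the additivity of $b_0$ and $b_1$ over disjoint unions.
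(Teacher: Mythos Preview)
Your proposal is correct and follows exactly the route the paper takes: the paper states just before Theorem~\ref{A tilde main} that embedded blocks in the annulus are points, curves, disks, or annuli, and then remarks that Theorems~\ref{lattice} and~\ref{graded} specialize to give the result. Your write-up is in fact more detailed than the paper's, since you spell out the Betti-number bookkeeping and flag the block classification (the bound $b\le 2$) as the one point requiring care; the paper simply asserts that classification without argument.
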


Coxeter groups of type $\afftype{A}$ have the property (unique among affine or finite Coxeter groups) that two different Coxeter elements can fail to be conjugate.
For non-conjugate Coxeter elements $c$ and $c'$, the intervals $[1,c]_T$ and $[1,c']_T$ can fail to be isomorphic.
The choice of Coxeter element can be encoded as a choice, for each number $i$ from $1$ to $n$, of whether to put the $i^{\text{th}}$ point on the inner or outer boundary of the annulus.
The results quoted below assume this correspondence between Coxeter elements and placements of marked points.

We say that an annular block in $(\S,\M)$ is \newword{dangling} if it contains marked points from only one boundary of $\S$.
Equivalently, one component of its boundary is the unique ring in $\S$.
Among the main results of \cite{BThesis} (which appears as part of \mbox{\cite[Theorem~3.25]{affncA}}) is that the interval $[1,c]_T$ in the Coxeter group of type $\afftype{A}_{n-1}$ is isomorphic to the subposet of $\NC_{(\S,\M)}$ consisting of noncrossing partitions with no dangling annular blocks.
Another main result is the construction of a larger group such that the analog of $[1,c]_T$ is isomorphic to $\NC_{(\S,\M)}$.
These results are proved by means of an explicit isomorphism that reads the cycle structure of an affine permutation from a noncrossing partition.

The construction of planar diagrams for elements of $[1,c]_T$ follows a process analogous to the constructions in \cite{plane} in finite type, where the diagrams discussed in Sections~\ref{A sec}--\ref{D sec} were obtained by projecting a small orbit to the Coxeter plane. 
The idea of extending $[1,c]_T$ to a larger poset that is a lattice is inspired by the main construction of \cite{McSul}, which proceeds by ``factoring'' translations.
We show in \cite{affncA} that extending the poset of noncrossing partitions with no dangling annular blocks to the full lattice $\NC_{(\S,\M)}$ also proceeds by factoring translations, but in a different way suggested by the combinatorics of noncrossing partitions of $(\S,\M)$.

\subsection{Type $\afftype{C}$}\label{C tilde sec}
Let $\S$ be an annulus and let $\B$ be a set of $2n-2$ marked points, with $n-1$ marked points on each boundary component of $\S$.
Let $\phi$ be a nontrivial involutive homeomorphism from $\S$ to itself that exchanges the two boundary components of~$\S$ and permutes $\B$.
Picturing $\S$ as a hollow cylindrical tube, we can see $\phi$ as a rotation by a half-turn through an axis parallel to the boundary circles of the tube.
In particular, $\phi$ has two fixed points in the interior of $\S$.
We take $\D=\emptyset$, so that $\S=\S^\pm$.
The embedded blocks of $(\S,\B,\emptyset,\phi)$ are points, curves, disks and annuli, but a dangling annulus cannot appear as a block in a noncrossing partition of $(\S,\B,\emptyset,\phi)$, because such a block $E$ would contain the same ring in its boundary as its symmetric partner $\phi(E)$.
The number of annular blocks in a noncrossing partition is either $0$ or $1$.
The poset $\NC_{(\S,\B,\emptyset,\phi)}$ is the subposet of $\NC(\S,\B)$ induced by noncrossing partitions fixed (up to isotopy) by $\phi$, and thus in particular is a lattice.
Combining this fact with Theorem~\ref{sym graded}, we obtain the following theorem, which again is a result of \cite{BThesis}.

\begin{theorem}\label{C tilde main}
The poset $\NC_{(\S,\B,\emptyset,\phi)}$ of symmetric noncrossing partitions of an annulus with $n-1$ marked points on each boundary is a graded lattice, with rank function given by $n-1$ minus the number of symmetric pairs of distinct disk blocks plus the number of annular blocks.
\end{theorem}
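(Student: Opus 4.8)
The plan is to deduce the theorem from Theorem~\ref{sym graded}, together with the observation (recorded in the paragraph before the statement) that $\NC_{(\S,\B,\emptyset,\phi)}$ is the subposet of $\NC(\S,\B)$ induced by the noncrossing partitions fixed up to isotopy by~$\phi$. For the lattice assertion, note that, because $\D=\emptyset$, the homeomorphism $\phi$ carries noncrossing partitions of $(\S,\B)$ to noncrossing partitions and preserves the order of Definition~\ref{nc le def}, hence induces an automorphism of the lattice $\NC(\S,\B)$ (Theorem~\ref{lattice}); and the fixed points of a lattice automorphism form a sublattice, since $\P=\phi(\P)$ and $\Q=\phi(\Q)$ give $\phi(\P\meet\Q)=\phi(\P)\meet\phi(\Q)=\P\meet\Q$ and similarly for $\join$. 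The one point needing care is the identification of symmetric noncrossing partitions with $\phi$-fixed ordinary ones, with matching order relations; this is a standard equivariant-isotopy fact for surfaces (a noncrossing partition of $(\S,\B)$ fixed up to isotopy by the finite-order homeomorphism $\phi$ has a genuinely $\phi$-symmetric representative, and two $\phi$-symmetric representatives that are ordinarily isotopic are symmetrically isotopic).

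Gradedness is then immediate from Theorem~\ref{sym graded}, which gives $\rank(\P)=\frac12|\M|+b_1^\phi(\P)-b_0^\phi(\P)$. Since $\D=\emptyset$ we have $\M=\B$ and $|\M|=2n-2$, so the constant term is $n-1$. It remains to match $b_0^\phi(\P)$ and $b_1^\phi(\P)$ with the quantities in the statement. Every block of $\P$ is a point, a curve, a disk, or an annulus, and, as recorded before the theorem, no dangling annulus occurs and $\P$ has at most one annular block. Consequently, if $E$ is an annular block then $\phi(E)$ is again an annular block, and uniqueness forces $\phi(E)=E$; in particular an annular block is never one of a symmetric pair of distinct blocks. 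Since $b_0^\phi(\P)$ counts the symmetric pairs $\{E,\phi(E)\}$ of distinct blocks, it equals the number of symmetric pairs of distinct disk blocks (points, curves, or disks), as required.

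For $b_1^\phi(\P)$, recall that in the annulus $\S$ every non-bounding circle of $\P$ is isotopic to the core of $\S$ and hence lies in the boundary of some block; only an annular block has such boundary components, and each annular block contributes exactly $1$ to $b_1(\P)$, so $b_1(\P)$ equals the number of annular blocks, which is $0$ or~$1$. If there is no annular block then $b_1^\phi(\P)=0$. If $E$ is the (necessarily symmetric) annular block, then, the other blocks of $\P$ being contractible, $H_1(\P)=H_1(E)\cong\ZZ$; moreover $\phi$ acts by $-1$ on $H_1(\S)\cong\ZZ$ because it is an orientation-preserving homeomorphism interchanging the two boundary components of $\S$, so, since the core of $E$ is essential in $\S$, it also acts by $-1$ on $H_1(E)$. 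Writing $C$ for the core class, $C+\phi(C)=0$, the kernel of $C\mapsto C+\phi(C)$ is all of $H_1(\P)$, and $b_1^\phi(\P)=1$. In every case $b_1^\phi(\P)$ is the number of annular blocks of~$\P$, and substituting into the rank formula gives that $\rank(\P)$ equals $n-1$ minus the number of symmetric pairs of distinct disk blocks plus the number of annular blocks, as claimed.

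I expect the main obstacle to be not any single hard step but the careful bookkeeping for $b_0^\phi$ and $b_1^\phi$ — in particular, pinning down that an annular block must be symmetric (so that it contributes to $b_1^\phi$ but not to $b_0^\phi$) and that $\phi$ then acts by $-1$ on its first homology. Both the lattice property and gradedness come essentially for free from Theorems~\ref{lattice} and~\ref{sym graded} once the identification of $\NC_{(\S,\B,\emptyset,\phi)}$ with the $\phi$-fixed subposet of $\NC(\S,\B)$ is in hand.
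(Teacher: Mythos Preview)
Your proposal is correct and follows essentially the same route as the paper: the lattice property comes from realizing $\NC_{(\S,\B,\emptyset,\phi)}$ as the $\phi$-fixed sublattice of $\NC(\S,\B)$ (using Theorem~\ref{lattice}), and gradedness with the stated rank function is read off from Theorem~\ref{sym graded}. The paper simply asserts that combining these facts yields the theorem, whereas you spell out the translation of $b_0^\phi$ and $b_1^\phi$ into the stated quantities; your verification that an annular block must be symmetric and that $\phi$ acts by $-1$ on its $H_1$ is exactly the bookkeeping the paper leaves implicit.
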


The same folding considerations mentioned in Section~\ref{B sec}, and the fact that symmetric noncrossing partitions can't have dangling annular blocks, show that the interval $[1,c]_T$ in the Coxeter group of type $\afftype{C}_{n-1}$ is isomorphic to the sublattice of the analogous interval in $\afftype{A}_{2n-2}$ consisting of elements of fixed by a certain involutive automorphism.
In light of the folding results mentioned in Section~\ref{A tilde sec}, the interval $[1,c]_T$ in type $\afftype{C}_{n-1}$ is isomorphic to $\NC_{(\S,\B,\emptyset,\phi)}$, as pointed out in \cite{BThesis}.
The lattice $\NC_{(\S,\B,\emptyset,\phi)}$ can also be realized as noncrossing partitions of a disk with two orbifold points.
Details are found in \cite[Section~4]{affncA}.

\subsection{Type $\afftype{D}$}\label{D tilde sec}
For $n\ge 3$, let $\S$ be an annulus and let $\B$ be a set of $2n-6$ marked points on the boundary of $\S$, with $n-3$ marked points on each component of the boundary.
Let $\phi$ be a nontrivial involutive homeomorphism from $\S$ to itself, as described in Section~\ref{C tilde sec}, permuting $\B$.
Let $\D$ be the set consisting of the two fixed points of $\phi$, so that $\D^\pm$ consists of four points, two at each fixed point.
Symmetric embedded blocks in $(\S^\pm,\B,\D^\pm,\phi)$ can be disks or annuli, while symmetric pairs of blocks can be pairs of points, curves, disks, or annuli.
Theorem~\ref{sym graded} specializes to the following theorem, a result of \cite{BThesis}.

\begin{theorem}\label{D tilde main}
The poset $\NC_{(\S^\pm,\B,\D^\pm,\phi)}$ of symmetric noncrossing partitions of an annulus with $n-3$ marked points on each boundary and two pairs of double points is graded, with rank function given by $n-1$ minus the number of symmetric pairs of distinct non-annular blocks plus the number of symmetric annular blocks.
\end{theorem}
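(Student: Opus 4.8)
The plan is to derive the theorem from Theorem~\ref{sym graded} by evaluating the three quantities $\tfrac12|\M|$, $b_0^\phi(\P)$ and $b_1^\phi(\P)$ in the present situation; gradedness is part of Theorem~\ref{sym graded}. Since $|\B|=2n-6$ and $|\D^\pm|=2|\D|=4$, we have $|\M|=2n-2$, so $\tfrac12|\M|=n-1$, which supplies the constant term. Two features will be used throughout: the embedded blocks of $\P$ are points, curves, disks, or annuli (as noted just before the statement); and, because $\phi$ interchanges the two boundary circles of $\S$, its only fixed points on $\S$ are the two double points, so $\phi$ is fixed-point-free on $\S^\pm$ itself, and $\S^\pm$ has no symmetric arcs and no symmetric rings. (The latter is what forces points and curves to occur only in symmetric pairs.)

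For $b_0^\phi(\P)$ I would cite the observation recorded before Theorem~\ref{sym graded}: $b_0^\phi(\P)$ is the number of symmetric pairs of distinct embedded blocks of $\P$, which splits as the number of symmetric pairs of annular blocks plus the number of symmetric pairs of non-annular blocks. The heart of the argument is the claim that $b_1^\phi(\P)$ equals the number of symmetric annular blocks of $\P$ plus the number of symmetric pairs of annular blocks of $\P$. Since $H_1(\P)=\bigoplus_E H_1(E)$ over the blocks $E$, and $H_1(E)=0$ unless $E$ is an annulus (a symmetric disk or annulus ``with double points'' is, as its underlying surface, still a disk or an annulus), only annular blocks contribute. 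A symmetric pair $E,\phi(E)$ of annular blocks contributes a summand $\ZZ^2$ on which $\phi$ interchanges the two factors up to sign, so $\ker(1+\phi)$ on that summand is one-dimensional; and a symmetric annular block $E=\phi(E)$ contributes $\ZZ$ on which, as discussed below, $\phi$ acts by $-1$, so the core of $E$ lies in $\ker(1+\phi)$ and contributes $1$. Granting this, the symmetric-pairs-of-annular-blocks terms cancel in $b_1^\phi(\P)-b_0^\phi(\P)$, which therefore equals the number of symmetric annular blocks minus the number of symmetric pairs of distinct non-annular blocks; adding $\tfrac12|\M|=n-1$ yields exactly the asserted rank function.

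The main obstacle is justifying that $\phi$ acts by $-1$ on $H_1(E)$ for every symmetric annular block $E$ occurring in a noncrossing partition, equivalently that $\phi$ interchanges the two ends of $E$. An orientation-preserving involution of an annulus either interchanges its two ends (and then has two fixed points in its interior) or fixes each end setwise (and is then conjugate to a free half-turn rotation, acting by $+1$ on $H_1$), so I must exclude the second alternative. If $E$ contains a double point in its interior, then $\phi|_E$ has that fixed point and the first alternative holds. If $E$ has no double point in its interior, the second alternative would force $\phi$ to fix each component of $\S\setminus E$ setwise; when the core of $E$ is essential in $\S$ this contradicts $\phi$ interchanging the boundary circles of $\S$, and when the core of $E$ bounds a disk $D$ in $\S$ then $\phi(D)=D$ and either $D$ is a genuine disk in $\S^\pm$, so the involution $\phi|_D$ has a fixed point, contradicting that $\phi$ is fixed-point-free on $\S^\pm$, or $D$ meets $\D$ only along its boundary, in which case the region enclosed by $E$ contains no marked points in its interior and so cannot be covered by the remaining blocks of $\P$, meaning $E$ could not have arisen in a noncrossing partition. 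One further bookkeeping point: for the homology computation each block is taken to be its underlying compact surface (or curve or point), the double points contributing only to the finer marked-surface structure.
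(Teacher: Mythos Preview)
Your approach is exactly the paper's: the theorem is stated as a specialization of Theorem~\ref{sym graded}, and the paper gives no further details, so you are supplying the computation the paper omits.  The evaluation of $\tfrac12|\M|=n-1$ and of $b_0^\phi(\P)$ is correct, as is your identification of the crux: one must show that every symmetric annular block $E$ has $\phi$ acting by $-1$ on $H_1(E)$.  Your argument when the core of $E$ is essential in $\S$ is right, and can be said even more briefly: $\phi$ swaps the two boundary circles of the ambient annulus~$\S$, hence acts by $-1$ on $H_1(\S)$, and the inclusion $E\hookrightarrow\S$ is nonzero on $H_1$.

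The null-homotopic-core case, however, is not handled correctly.  Your dichotomy --- ``$D$ is a genuine disk in $\S^\pm$'' versus ``$D$ meets $\D$ only along its boundary'' --- omits the possibility that $D$ contains a double-point location in its interior; and the reasoning you give in the second branch (that an empty enclosed region ``cannot be covered by the remaining blocks of~$\P$'') is not a contradiction, since a noncrossing partition is not required to cover all of~$\S$.  A cleaner way to rule out this case: if the core of $E$ were null-homotopic then $\phi$ would have to preserve each boundary circle $C_i$ of $E$ (the two complementary regions are not homeomorphic), and on each $C_i$ it would act as a free half-turn (an orientation-preserving nontrivial involution of $S^1$).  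Hence no double-point location lies on $\partial E$, and none lies in $\int(E)$ either (a fixed point there would force $\phi$ to swap the $C_i$).  Now each $C_i$ is null-homotopic in~$\S$, so by Definition~\ref{sym ring def} and Proposition~\ref{no ring doub} neither $C_i$ is a ring; thus each $C_i$ must be a union of arcs and boundary segments and in particular must carry marked points.  If $C_1$ lies in $\int(\S)$ its marked points would have to be double points, which we have excluded, a contradiction.  If instead $C_1$ meets $\partial\S$, one checks that the $\phi$-stable inner disk $D_1$ has odd Euler characteristic and hence contains a double-point location in its interior; pushing this analysis to $C_2$ and the outer region yields the contradiction.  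In short, the null-homotopic case does not occur, but your sketch does not establish this.
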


The poset $\NC_{(\S^\pm,\B,\D^\pm,\phi)}$ is not a lattice, because it has an interval isomorphic to the noncrossing partition lattice described in Example~\ref{non lat} and shown in Figure~\ref{non lat fig}.
This is the interval below the noncrossing partition whose only nontrivial block is a symmetric annulus with both double points in its interior and no marked points on its boundary.
(An erroneous proof that $\NC_{(\S^\pm,\B,\D^\pm,\phi)}$ is a lattice in this case was given in~\cite{BThesis}.
Essentially the same error was made in an earlier version of this paper, which purported to prove that $\NC_{(\S^\pm,\B,\D^\pm,\phi)}$ is a lattice in full generality.)

In this type-$\afftype{D}$ context, an annular block is \newword{dangling} if it contains marked points only from one boundary of $\S$ or if its only marked points are double points.
Equivalently, one or more components of its boundary is a ring.
One of the main results of \cite{BThesis}, reported in \cite[Section~5.2]{affncD}, is that the interval $[1,c]_T$ in the Coxeter group of type~$\afftype{D}_{n-1}$ is isomorphic to the subposet of $\NC_{(\S^\pm,\B,\D^\pm,\phi)}$ consisting of noncrossing partitions with no dangling annular blocks.
One can again extend the Coxeter group of type $\afftype{D}_{n-1}$ to a larger group such that $\NC_{(\S^\pm,\B,\D^\pm,\phi)}$ is isomorphic to the interval analogous to $[1,c]_T$ in the larger group.
The extension to a larger group again proceeds by factoring translations (see Section~\ref{A tilde sec}), but to obtain a lattice, we need further factorization of translations which move the model beyond the noncrossing partition posets defined in this paper.
See \cite[Section~6]{affncD}.

\subsection{Type $\afftype{B}$}\label{B tilde sec}
The last example in this section is type $\afftype{B}_{n-1}$, which is obtained from type $\afftype{D}_n$ by folding.
Folding, together with the results mentioned in Section~\ref{D tilde sec}, suggests that we should pass from the lattice of symmetric noncrossing partitions of an annulus with two pairs of double points to a particular sublattice.
If $d_+,d_-$ is one of the pairs of double points, then the sublattice consists of noncrossing partitions fixed under the symmetry of swapping $d_+$ with $d_-$.
These are the partitions which either have blocks $\set{d_+}$ and $\set{d_-}$ or have a block whose interior contains $\set{d_+,d_-}$.
In effect, we might as well delete $d_+$ and $d_-$ entirely.

Accordingly, let $\S$ be an annulus and let $\B$ be a set of $2n-4$ marked points on the boundary of $\S$, with $n-2$ marked points on each component of the boundary.
Take $\phi$ to be the same symmetry as before, permuting $\B$, and let $\D$ be the set consisting of one of the fixed points of $\phi$, so that $\D^\pm$ consists of one pair of double points at that fixed point.
In this case, Theorem~\ref{sym graded} specializes to the following theorem.

\begin{theorem}\label{B tilde main}
The poset $\NC_{(\S^\pm,\B,\D^\pm,\phi)}$ of symmetric noncrossing partitions of an annulus with $n-2$ marked points on each boundary and one pair of double points is graded, with rank function given by $n-1$ minus the number of symmetric pairs of distinct non-annular blocks plus the number of symmetric annular blocks.
\end{theorem}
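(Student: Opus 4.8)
The plan is to obtain Theorem~\ref{B tilde main} directly from Theorem~\ref{sym graded}, reducing everything to bookkeeping. Gradedness of $\NC_{(\S^\pm,\B,\D^\pm,\phi)}$ is immediate from Theorem~\ref{sym graded}, whose rank function is $\tfrac12|\M|+b_1^\phi(\P)-b_0^\phi(\P)$. Here $|\M|=|\B|+|\D^\pm|=(2n-4)+2=2n-2$, so $\tfrac12|\M|=n-1$, which is the constant term in the asserted formula. Thus it remains to show that $b_1^\phi(\P)-b_0^\phi(\P)$ equals the number of symmetric annular blocks of $\P$ minus the number of symmetric pairs of distinct non-annular blocks of $\P$.

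For the bookkeeping I would first note that, because $\S$ is an annulus (hence of genus $0$, with every subsurface planar) and because each boundary component of a block must be a ring, an empty boundary component, or a union of arcs and boundary segments, every block of $\P$ is a point, an arc, a boundary segment, a disk, or an annulus; in particular the only non-bounding circles in $\P$ are cores of annular blocks. As recorded in the discussion preceding Theorem~\ref{sym graded}, $b_0^\phi(\P)$ is the number of symmetric pairs $E,\phi(E)$ of distinct blocks, which splits as the number of such pairs of non-annular blocks plus the number of such pairs of annular blocks. I would then show that $b_1^\phi(\P)$ equals the number of symmetric annular blocks plus the number of symmetric pairs of distinct annular blocks: a symmetric pair $E,\phi(E)$ of annular blocks contributes exactly $1$ to $b_1^\phi$ by the very computation used to define $b_1^\phi$ (their cores lie in distinct components of $\P$, so $C_E-\phi(C_E)$ spans a one-dimensional piece of the kernel), while a symmetric annular block $E=\phi(E)$ contributes exactly $1$ because $\phi$ reverses the orientation of its core circle, placing that core in the kernel. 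Subtracting $b_0^\phi$ from $b_1^\phi$ then yields the asserted identity, and the theorem follows.

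The only step that is not completely routine is the claim that $\phi$ reverses the core of every symmetric annular block $E$. Here $\phi|_E$ is an orientation-preserving involution of the annulus $E$, so it either interchanges the two boundary components of $E$ or fixes each of them setwise. If it interchanges them it is a half-turn and reverses the core. If it fixes each of them, then since a symmetric ring cannot be a boundary component of an embedded block (Remark~\ref{no sym boundary}), each fixed boundary component must be a union of arcs and boundary segments, and one checks that this forces the core of $E$ to be a $\phi$-invariant essential circle of $\S$ on which $\phi$ acts freely; but (using Proposition~\ref{no ring doub} together with the classification of simple closed curves in $\S$) the only $\phi$-invariant essential circles of $\S$ pass through a fixed point of $\phi$, where $\phi$ acts as a half-turn, so no such circle exists and this subcase does not occur. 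Hence $\phi(C)=-C$ in all cases. This is the same analysis carried out in the proof of Theorem~\ref{D tilde main}; one could alternatively deduce Theorem~\ref{B tilde main} from Theorem~\ref{D tilde main} by the folding procedure sketched before the statement, but presenting it as a direct specialization of Theorem~\ref{sym graded} keeps it self-contained.
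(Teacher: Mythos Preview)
Your approach matches the paper's exactly: the paper simply asserts that Theorem~\ref{B tilde main} is a specialization of Theorem~\ref{sym graded} and gives no further argument, while you carry out the bookkeeping explicitly. (Your reference to ``the proof of Theorem~\ref{D tilde main}'' is misplaced, since the paper gives no proof there either---it too is stated as a bare specialization.)

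There is, however, a gap in your last paragraph. In the subcase where $\phi|_E$ fixes each boundary circle of a symmetric annular block~$E$, you claim the core of~$E$ is a $\phi$-invariant \emph{essential} circle of~$\S$ on which $\phi$ acts freely, and then rule this out. In fact the opposite holds: since $\phi$ swaps the two boundary components of the ambient annulus, $\phi_*=-1$ on $H_1(\S)$; if $\phi_*=+1$ on $H_1(E)$ (as it would be in this subcase), naturality of the inclusion forces $i_*[\text{core}]=0$ in $H_1(\S)$, so the core is \emph{nullhomotopic} in~$\S$, and your essential-circle argument does not apply. The subcase is still impossible, but for a different reason: with the core nullhomotopic, both boundary circles of~$E$ bound disks in~$\S$, hence cannot be rings and must consist of arcs and boundary segments through marked points on $\partial\S$ (the double point lies in the interior of the inner disk, so it is unavailable). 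But then the inner boundary circle touches $\partial\S$ at a marked point that would have to lie in the open disk bounded by the outer circle, and one checks (most easily in the quotient $\S/\phi$, a disk) that no such marked point exists unless the two boundary circles meet. So your conclusion stands, but the justification needs repair.
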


In \cite[Section~7]{affncD}, we show that the interval $[1,c]_T$ in the Coxeter group of type $\afftype{B}_{n-1}$ is isomorphic to the subposet of $\NC_{(\S^\pm,\B,\D^\pm,\phi)}$ consisting of noncrossing partitions with no dangling annular blocks, and realize $\NC_{(\S^\pm,\B,\D^\pm,\phi)}$ as an interval in a supergroup.
We also show that $\NC_{(\S^\pm,\B,\D^\pm,\phi)}$ is isomorphic to the lattice constructed in \cite{McSul}.

\end{document}